\documentclass[12pt,oneside]{article}
\usepackage{amsmath,amssymb,amsfonts,amsthm}
\usepackage{eurosym}
\usepackage{amsfonts}
\usepackage{amsmath}
\usepackage{geometry}
\usepackage{mathtools}
\usepackage{pdfpages}
\usepackage{amsthm}
\usepackage{indentfirst}
\setlength{\parindent}{0.75cm}
\usepackage{hyperref}
\usepackage{cleveref}
\crefformat{section}{\S#2#1#3} 
\crefformat{subsection}{\S#2#1#3}
\crefformat{subsubsection}{\S#2#1#3}
\hypersetup{
	colorlinks=true,
	linkcolor=blue,
	filecolor=magenta,
	urlcolor=cyan,
	citecolor=red}

\setcounter{MaxMatrixCols}{10}
\newtheorem{theorem}{Theorem}[section]
\newtheorem{remark}[theorem]{Remark}
\newtheorem{lemma}[theorem]{Lemma}
\newtheorem{corollary}[theorem]{Corollary}
\newtheorem{definition}[theorem]{Definition}
\newtheorem{example}[theorem]{Example}
\newtheorem{proposition}[theorem]{Proposition}

\geometry{left=2cm,right=2cm,top=2cm,bottom=2cm}
\numberwithin{equation}{section}

\newcommand{\C}{{\mathcal C}}

\begin{document}
\title{\bf{ Anisotropic conformal change of conic pseudo-Finsler surfaces, II}}

\author{{\bf Nabil L. Youssef$\,^1$, S. G. Elgendi$^{2}$,   A. A. Kotb$^3$ and Ebtsam H. Taha$^1$ }}
\date{}

\maketitle
\vspace{-1.15cm}

\begin{center}
{$^1$Department of Mathematics, Faculty of Science,\\
Cairo University, Giza, Egypt}
\vspace{-8pt}
\end{center}

\begin{center}
 $^{2}$ Department of Mathematics, Faculty of Science,\\ Islamic University of Madinah,  Madinah, Kingdom of Saudia Arabia
\vspace{0.2cm}
\end{center}

\begin{center}
{$^3$Department of Mathematics, Faculty of Science,\\
Damietta University, Damietta , Egypt}
\vspace{-8pt}
\end{center}

\begin{center}
nlyoussef@sci.cu.edu.eg, nlyoussef2003@yahoo.fr\\
selgendi@iu.edu.sa, salahelgendi@yahoo.com\\
alikotb@du.edu.eg,  kotbmath2010@yahoo.com\\
ebtsam.taha@sci.cu.edu.eg, ebtsam.h.taha@hotmail.com

\end{center}

\begin{abstract}
This paper is a continuation of our investigation of the anisotropic conformal change of a conic pseudo-Finsler surface $(M,F)$, namely, the change $\overline{F}(x,y)=e^{\phi(x,y)}F(x,y)$ \cite{first paper}. We obtain the relationship between some important geometric objects of $F $ and their corresponding objects of $\overline{F}$, such as Berwald, Landsberg and Douglas tensors, as well as the T-tensor. In contrast to isotropic conformal transformation, under an anisotropic conformal transformation, we find out the necessary and sufficient conditions for a Riemannian surface to be anisotropically conformal transformed to Berwald or Landsberg or Douglas surfaces. Consequently, we determine under what condition the geodesic spray of a two-dimensional pseudo-Berwald metric $\overline{F}$ is Riemann metrizable by a two-dimensional pseudo-Riemannian metric $F$. We show an example of a conformal transformation of a Riemannian metric $F$ that is not geodesically equivalent to a Riemannian metric but is instead Berwaldian. Also, we determine the necessary and sufficient conditions for $F $ to be anisotropically conformally flat (i.e., $\overline{F}$ is Minkowskian). Moreover, we identify the required conditions for preserving the $T$-condition under an anisotropic conformal change. Finally, we establish the necessary conditions for a Riemannian metric to be anisotropically conformal to a Douglas metric.
\end{abstract}
\noindent{\bf Keywords:\/}\,   conic pseudo-Finsler surface; modified Berwald frame; anisotropic conformal change;  Landsberg surfaces; Berwald surfaces; T-tensor

\medskip\noindent{\bf MSC 2020:\/}  53B40, 53C60

\section*{Introduction}
A conic pseudo-Berwald space is a subclass of conic pseudo-Finsler spaces that admit a unique metric-compatible, torsion-free affine connection. In these spaces, the coefficients of the geodesic spray are quadratic in the directional argument $y$ or, equivalently,  the nonlinear (Barthel) connection coefficients are linear in $y$ \cite{Bacs-Matsumoto}. A conic pseudo-Finsler metric is said to be  conformally Berwald or conformally flat if it can be conformally changed to a Berwald or Minkowski metric, respectively. Under (an isotropic) conformal transformation, Hashiguchi \cite{Hashiguchi76} discussed some  special Finsler metrics such as  Minkowski, Landsberg and Berwald metrics. Furthermore, he found the conditions for a Finsler space to be conformal to one of these spaces. Under a non-homothetic  conformal transformation of Finsler surfaces, he found a necessary condition for the properties of being  Landsbergian, Minkowskian and Berwaldian to be preserved. The necessary and sufficient conditions for certain $(\alpha,\beta)$-metrics as well as  Finsler metrics to have a scalar field $S(x,y)$ satisfying either Kikuchi’s assumption or conditions weaker than Kikuchi’s assumption to be conformally Berwald have been found in  \cite{Hojo Matsumoto Okubo, Matsumoto 2001, Tamim-Youssef, Tayebi-Amini-Najafi}. The conditions under which Randers space is conformally flat have been obtained in \cite{Ichijyo-Hashiguchi}.  Additionally, the criteria for a three-dimensional Landsberg conformally flat space is Berwaldian, as has been determined in \cite{Prasad-Pandey-Singh}. The class of two-dimensional Finsler spaces exhibits unique geometric properties that distinguish it from  higher dimensional Finsler spaces \cite{An.In.Ma1993, Bacs-Matsumoto, Matsumoto 2003, shen2002,  Tayebi-Koh}.Studying anisotropic conformal transformation encounters some difficulties as the conformal factor depends not only on the position (as in conformal transformation) but also on the directional argument. In \cite{first paper}, the investigation of anisotropic conformal change of a conic pseudo-Finsler surface ($F(x,y)\longmapsto\overline{F}(x,y)=e^{\phi(x,y)}F(x,y)$) has been started.In this paper, we continue what we have started in \cite{first paper}. In ~\S 2,  we give some important technical lemmas that will be used in the sequel.  In $\S 3$, we find out the anisotropic transformation of Berwald curvature. Consequently, we get the required condition for F to be  anisotropically conformally Berwald. The Riemannian metric can be changed to any non-Riemannian conic pseudo-Finsler in an anisotropic way, as opposed to an isotropic way. For example, see Proposition \ref{first theorem of F bar berwald}.  Consequently, we discuss under what conditions the geodesic spray of $(M,\overline{F})$ coincides with the geodesic spray of a  Riemannian surface $(M,F)$, which is called the Riemann metrizability of the Berwald surface $(M,\overline{F})$.\par In the case of positive definite Finsler metrics, Szabó's metrization theorem \cite{szabo} states that every metric of Berwald type is Riemannian  metrizable. In other words, there exists a Riemannian metric on $M$ and the Berwald connection is exactly the Levi–Civita connection associated with this Riemannian metric. In \cite{Universe, J.Math.Phys, Voicu Elgendi}, the authors have investigated  the conditions required for extending Szabó’s metrizability theorem to Finsler spacetime. A new characterization of Finsler metrics of Berwald type has been introduced in \cite{Fuster}.   Under the anisotropic conformal change, we investigate under what conditions the geodesic spray of a  Berwald   metric $\overline{F}$ coincides with the geodesic spray of a  Riemannian metric $F$.  We provide an example in which  the geodesic spray of a  Berwald  metric $\overline{F}$ does not coincide with  the geodesic spray of a Riemannian metric $F$. This example is a counterexample to Szabó's metrization theorem \cite{szabo} of pseudo-Finsler surfaces.\par Additionally, in ~\S 4 we find the transformation of the Landsberg scalar and T-tensor. The $T$-condition refers to the vanishing of the $T$-tensor \cite{elgendi T}. Under an anisotropic conformal change, we find out the conditions that preserve the $T$-condition. Moreover, we investigate the conditions under which the conic pseudo-Finsler metric $\overline{F}$ is Landsbergian. Consequently, Proposition \ref{in view of unicorn} gives the conditions that make a Landsberg metric $\overline{F}$  Berwaldian.Finally, in ~\S 5 we examine the conditions under which $\overline{F}$ becomes either a Minkowski or Douglas metric under an anisotropic conformal transformation. We deduce that $F$ is anisotropically conformally flat if and only if  $F\partial_{i}\phi+\partial_{i}F=0.$ The Douglas tensor remains invariant under projective changes. Consequently, if \(F\) is Riemannian,  Berwald or projectively flat with $Q = 0$, then $\overline{F}$ is  a Douglas metric.  More generally, for a conic pseudo-Berwald Finsler surface, $\overline{F}$ is a Douglas metric if and only if $3\psi = \varepsilon\chi_{;2} + \mathcal{I}\chi$, where the functions \(\psi\) and \(\chi\) are explicitly defined in terms of \(P\), \(Q\), and their derivatives.\par We end the paper with a conclusion that highlights the present work. 

\section{Notation and Preliminaries}
Let $ M $ be a smooth manifold of dimension n and $ \pi : TM \longrightarrow M $ denote the canonical projection. Let $TM_0:=TM\setminus(0)$ be  the slit tangent bundle and  $(x^{i}, y^i)$ be the local coordinate system on $TM$.  
  The almost-tangent structure $J$ of $T M$ is the vector $1$-form   given  by $J = \frac{\partial}{\partial y^i} \otimes dx^i$. The globally defined  vertical vector field
$\C=y^i\frac{\partial}{\partial y^i}$ on $TM$ is known as 
the Liouville vector field. The notation  $C^{\infty} (TM_0)$  typically refers to the  set of smooth functions on $TM_0$. A function  $f \in C^{\infty} (TM_0)$ is said to be  positively homogeneous of degree $r$ in  the directional  argument $y$ (and  denoted by $h(r)$) if $$f(x,\lambda y)=\lambda^r f(x,y) \qquad \forall \lambda>0.$$

A spray on $M$ is a special vector field $S$ on $ T M$ such that $JS = \C$ and
$[\C, S] = S$. Locally, a spray is given by \cite{Gifone}
\begin{equation*}
  \label{eq:spray}
  S = y^i \frac{\partial}{\partial x^i} - 2G^i (x,y) \,\frac{\partial}{\partial y^i},
\end{equation*}
where $G^i(x,y)$ are the spray coefficients which are $h(2)$. A nonlinear (Ehresmann or Barthel) connection provides a smooth decomposition of the tangent bundle $TM_0$
 into horizontal $H(TM_0)$  and vertical $V(TM_0) $ subbundles. 
For each point $u=(x,y)$ on $TM_0$, the local basis of $V_u(TM_0)$ and $H_u(TM_0)$ are defined, respectively by,  $\dot{\partial}_i:=\dfrac{\partial}{\partial y^i}$   and $\delta_i:=\dfrac{\delta}{\delta x^i}=\dfrac{\partial}{\partial x^i}-G^j_i \dot{\partial}_j$, where $G^j_i=\dot{\partial}_i G^j$ are the coefficients of the Barthel connection. The Berwald connection and Berwald curvature are given, respectively, by  $G^h_{ij}:=\dot{\partial}_i G^h_j$ and $G^h_{ijk}:= \dot{\partial}_kG^h_{ij}.$ 

A conic sub-bundle of $TM$ is a non-empty open subset $\mathcal{A}  \subset TM_0$ which is invariant under scaling of tangent vectors by positive real numbers and  satisfies $ \pi(\mathcal{A}) = M. $ 

\begin{definition}\emph{\cite{Voicu Elgendi}}
  A conic pseudo-Finsler  metric on $ M $ is a smooth $h(1)$ function
  $\; F:\mathcal{A}\longrightarrow \mathbb{R}$ which satisfies for each point of $ \mathcal{A}$, the Finsler metric tensor $ g_{ij} (x,y)=\frac{1}{2}\dot{\partial}_{i}\dot{\partial}_{j}F^{2} (x,y)$ is non-degenerate. The pair $(M, F)$ is called a conic pseudo-Finsler manifold and $L=F^2$ is called the conic pseudo-Finsler Lagrangian.
 \end{definition} 

\par For a conic pseudo-Finsler surface, the angular metric matrix $(h_{ij})$ has rank one. Thus, a unique 1-form $m_i(x,y)$ can be chosen with  $\varepsilon=\pm1$ to express $h_{ij}$ \cite{An.In.Ma1993} as $h_{ij}=\varepsilon m_im_j.$
Since we have $g_{ij}=\ell_{i}\ell_{j}+h_{ij}$, where $\ell_i=\dot{\partial_{i}}F$ and $\ell^{i}=\frac{y^i}{F}$. Then, the Finsler metric tensor can be written as 
\begin{equation}\label{metric tensor of 2-dim}
g_{ij}=\ell_{i}\ell_{j}+\varepsilon m_im_j.
\end{equation}
Obviously,
\begin{equation}
\ell^{i}\ell_{i}=1\qquad \ell^{i}m_{i}=\ell_{i}m^{i}=0,\qquad m^{i}m_{i}=\varepsilon.
\end{equation} 
Hence $(\ell^i,m^i)$ is an orthonormal frame which is called a modified Berwald frame. Moreover, we have    $\mathfrak{g} =\det(g_{ij})=\varepsilon(\ell_1m_2-\ell_2m_1)^2$.  Hence, we can say that $\varepsilon$ is the signature of the metric.
Furthermore, by \cite[Lemma 2.7]{first paper}, we obtain
\begin{equation}\label{relation between m and ell}
(m^1,m^2)= \frac{\varepsilon}{\sqrt{\varepsilon \mathfrak{g}}}(-\ell_2,\ell_1),\qquad (m_1,m_2)= \sqrt{\varepsilon \mathfrak{g}} (-\ell^2,\ell^1),
\end{equation}
 It should be noted that there is a missing $\varepsilon$ in the formulae of $m^1$ and $m^2$ in \cite[Lemma 2.7]{first paper}.
 
   From \eqref{relation between m and ell} and $\delta_i F=\partial_i F-G^r_i\ell_r=0$, we get 
\begin{equation}\label{geodesic of 2 dim}
2G^i=y^r(\partial_r F)\ell^i+\frac{F^2(\dot{\partial_2}\partial_1 F-\dot{\partial_1}\partial_2 F)}{h}m^i,
\end{equation}
The main scalar $\mathcal{I}(x,y)$ is an $h(0)$  smooth function defined, from the Cartan tensor \cite{Berwald41}, by
\begin{equation}\label{cartan tensor Finsler surface} 
 F C_{ijk}=\mathcal{I}\;m_{i}m_{j}m_{k}.
\end{equation}
\begin{lemma}\label{properties.2.dim.Fins}\em\cite{Matsumoto 2003}
  \it Assume $(M,F)$ is a conic pseudo-Finsler surface. Then we get the following:
  \begin{description}
    \item[(a)] $F\dot{\partial _{j}}\ell_{i}=\varepsilon m_{i}m_{j}=h_{ij},\qquad F\dot{\partial _{j}}\ell^{i}=\varepsilon m^{i}m_{j} $,
    \item[(b)] $F\dot{\partial _{j}}m_{i}=-(\ell_{i}-\varepsilon \mathcal{I} m_{i})m_{j}$,\qquad $F\dot{\partial _{j}}m^{i}=-(\ell^{i}+\varepsilon \mathcal{I} m^{i})m_{j}$.
\end{description}
\end{lemma}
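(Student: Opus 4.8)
The plan is to derive all four identities from the orthonormal decomposition \eqref{metric tensor of 2-dim} together with the definition $\ell_i=\dot{\partial}_iF$, treating the two-dimensional frame $(\ell^i,m^i)$ as a computational basis and recovering any unknown (co)vector by contracting it against this frame.

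First I would establish the auxiliary identity $F\dot{\partial}_j\ell_i=h_{ij}$, which drives the rest of the proof. Expanding $g_{ij}=\tfrac12\dot{\partial}_i\dot{\partial}_jF^2$ through $\dot{\partial}_jF^2=2F\ell_j$ gives $g_{ij}=\ell_i\ell_j+F\dot{\partial}_j\ell_i$; comparing with $g_{ij}=\ell_i\ell_j+h_{ij}$ and using $h_{ij}=\varepsilon m_im_j$ yields the first identity of (a) at once. For the second identity of (a), I would differentiate $\ell^i=y^i/F$ to obtain $F\dot{\partial}_j\ell^i=\delta^i_j-\ell^i\ell_j$ and then invoke the completeness relation $\delta^i_j=\ell^i\ell_j+\varepsilon m^im_j$. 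The latter holds because the right-hand tensor sends each frame vector $\ell^j,m^j$ to itself (using $\ell^i\ell_i=1$, $m^im_i=\varepsilon$, $\ell^im_i=m^i\ell_i=0$ and $\varepsilon^2=1$), and a linear map fixing a basis is the identity.

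For (b), the strategy is to write the unknown covector $F\dot{\partial}_jm_i=a\,\ell_i+b\,m_i$ and recover $a,b$ by contraction. Contracting with $\ell^i$, and differentiating the orthogonality $\ell^im_i=0$, gives $a=-m_j$ with the help of the second identity of (a). The coefficient $b$ is where the main scalar enters: contracting with $m^i$ reduces matters to computing $m^i\dot{\partial}_jm_i$, which I would obtain by differentiating \eqref{metric tensor of 2-dim}, inserting $\dot{\partial}_kg_{ij}=2C_{ijk}=\tfrac{2\mathcal I}{F}m_im_jm_k$ from \eqref{cartan tensor Finsler surface} together with the first identity of (a), and contracting the resulting equation with $m^im^j$; every $\ell$-term then drops out by orthogonality, leaving $m^i\dot{\partial}_jm_i=\tfrac{\mathcal I}{F}m_j$ and hence $b=\varepsilon\mathcal Im_j$. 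This produces $F\dot{\partial}_jm_i=-(\ell_i-\varepsilon\mathcal Im_i)m_j$. The remaining identity follows from the analogous decomposition $F\dot{\partial}_jm^i=\tilde a\,\ell^i+\tilde b\,m^i$: one finds $\tilde a=-m_j$ exactly as before, while $\tilde b=-\varepsilon\mathcal Im_j$ comes from $m_i\dot{\partial}_jm^i=-m^i\dot{\partial}_jm_i$, itself obtained by differentiating the normalization $m^im_i=\varepsilon$.

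The only genuinely delicate step is the extraction of $m^i\dot{\partial}_jm_i=\tfrac{\mathcal I}{F}m_j$, since this is the single place where the Cartan tensor, and thus the main scalar $\mathcal I$, feeds into the argument; everything else is bookkeeping in the orthonormal frame. I would also keep in mind throughout that $\varepsilon$ is a sign with $\varepsilon^2=1$ rather than a free parameter, as several of the cancellations rely precisely on this.
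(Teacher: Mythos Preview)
Your argument is correct. The paper does not supply its own proof of this lemma: it is quoted as a known result from \cite{Matsumoto 2003} and stated without proof, so there is no approach in the paper to compare against. Your derivation---extracting $F\dot{\partial}_j\ell_i=h_{ij}$ from the expansion of $g_{ij}$, using the completeness relation $\delta^i_j=\ell^i\ell_j+\varepsilon m^im_j$, and then recovering the components of $F\dot{\partial}_jm_i$ and $F\dot{\partial}_jm^i$ by contraction against the frame with the Cartan-tensor identity \eqref{cartan tensor Finsler surface} feeding in the main scalar---is exactly the standard route and all the computations check out.
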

For  $f\in C^{\infty}(TM_0),$ we define the v-scalar derivatives $(f_{;1}, f_{;2})$  and 
h-scalar derivatives $(f_{,1}, f_{,2})$ with respect to $F$ by \cite[~\S 3.5]{An.In.Ma1993}:
\begin{equation}\label{vertical and horizontal deri of f}
f_{|i}=F\dot{\partial}_i f=f_{;1}\ell_{i}+f_{;2}m_{i} \qquad f|_i=\delta_i f=f_{,1}\ell_{i}+f_{,2}m_{i},
\end{equation}
where
\begin{align}
f_{;1}&=y^{i}\dot{\partial}_i f,\quad\qquad f_{;2}=\varepsilon F(\dot{\partial}_{i}f)m^{i},\label{comp of vertical  scalar deriv of f }\\
f_{,1}&=(\delta_{i}f)\ell^{i},\quad\qquad f_{,2}=\varepsilon (\delta_{i}f)m^{i}.\label{comp of horizontal scalar deriv of f}
\end{align}
\begin{definition}
    A function $f\in C^{\infty}(TM_0)$ is said to be horizontally constant if $\delta_{i}f=0,$ which is equivalent to $f_{,1}=f_{,2}=0$.
\end{definition}
Specifically, if $ f$ is $ h(r) $, then $f_{;1}=rf$. The commutation formulas  of $h(0)$  function $f$ are given by \cite{Berwald41}:
\begin{align}
 f_{,1,2}-f_{,2,1}=&-Rf_{;2},\label{first comutation}\\
 f_{,1;2}-f_{;2,1}=&f_{,2},\label{second comutation}\\
f_{,2;2}-f_{;2,2}=&-\varepsilon(f_{,1}+\mathcal{I}f_{,2}+\mathcal{I}_{,1}f_{;2}),\label{third comutation}
\end{align}
where $R$ is  the Gauss curvature or the $h$-scalar curvature.
\begin{definition}\emph{\cite{Bacs-Matsumoto}}
 The  $T$-tensor, Landsberg scalar, Berwald curvature and Douglas tensor of the conic pseudo-Finsler surface $(M,F)$ are given, respectively, by
\begin{align}
FT_{ijkh}=&\mathcal{I}_{;2} \,m_im_jm_km_h,\label{T tensor of F}\\
\mathcal{J}=&S(\mathcal{I})= F\,\mathcal{I}_{,1},\label{landesberg curvature for syrfaces}\\
F B^{i}_{jkr}=&[-2\mathcal{I}_{,1} \,\ell^{i}+\mathcal{I}_{2} \,m^{i}]m_{j}m_{k}m_{r},\label{berwald curvature 2}\\
3F D^{i}_{jkr}=&-(6\mathcal{I}_{,1}+\varepsilon\mathcal{I}_{2;2}+2\mathcal{I}\,\mathcal{I}_2)\ell^im_hm_jm_k,\label{douglas curvature}
\end{align}
where $\mathcal{I}_2=\mathcal{I}_{,1;2}+\mathcal{I}_{,2}.$
\end{definition}
Consequently, in a Finsler surface $(M,F)$ we have the following:
\begin{description}
\item[(a)]  $F$ is Landsbergian if and only if $\mathcal{I}_{,1}=0$.
\item[(b)] $F$ is Berwaldian if and only if $\mathcal{I}_{,1}=0$ and $\mathcal{I}_{,2}=0$.
\item [(c)]$F$ satisfies the $T$-condition ($F$ has a vanishing T-tensor) if and only if $\mathcal{I}_{;2}=0.$
\item[(d)]  $F$ is Douglasian if and only if $6\mathcal{I}_{,1}+\varepsilon\mathcal{I}_{2;2}+2\mathcal{I}\,\mathcal{I}_2=0.$ 
\end{description}
\begin{lemma}\label{homothety of phi}
Let $(M,F)$ be a two-dimensional conic pseudo-Finsler space  and $f$ be a smooth $h(0)$ function on $TM_0$. The function f is constant if one of the following conditions is satisfied:
\begin{description}
\item[(a)] $f_{,1}=f_{;2}=0$.
\item[(b)] $f_{,2}=f_{;2}=0$.
\item[(c)]$f$ is  a horizontally constant function provided that $R \neq 0.$
\end{description}
\end{lemma}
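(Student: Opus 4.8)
The plan is to reduce the notion of constancy to the simultaneous vanishing of three scalar derivatives, and then to recover the single missing derivative in each case from one commutation formula.

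First I would record the characterization of a constant function in the modified Berwald frame. Since $f$ is $h(0)$, the homogeneity relation $f_{;1}=rf$ with $r=0$ gives $f_{;1}=0$, so by \eqref{vertical and horizontal deri of f} the vertical and horizontal differentials read $F\dot\partial_i f = f_{;2}m_i$ and $\delta_i f = f_{,1}\ell_i + f_{,2}m_i$. Because $(\ell_i,m_i)$ is a frame (and $F\neq 0$ on $\mathcal{A}$), the function $f$ is constant on $TM_0$---equivalently $\dot\partial_i f=0$ and $\delta_i f=0$ for all $i$---if and only if $f_{;2}=f_{,1}=f_{,2}=0$. Thus in each of (a)--(c) exactly two of these three scalars are assumed to vanish identically, and the task is to deduce the third. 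I would also note at this point that since each hypothesis is an identity on the relevant conic domain of $TM_0$, any further scalar derivative of a vanishing scalar is again zero; this is what licenses the substitutions below.

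Then I would dispatch the three cases, each with one commutation formula from \eqref{first comutation}--\eqref{third comutation}. For (a), apply the second commutation formula $f_{,1;2}-f_{;2,1}=f_{,2}$: the hypotheses $f_{,1}=0$ and $f_{;2}=0$ annihilate both terms on the left, forcing $f_{,2}=0$. For (b), apply the third commutation formula $f_{,2;2}-f_{;2,2}=-\varepsilon(f_{,1}+\mathcal{I}f_{,2}+\mathcal{I}_{,1}f_{;2})$: the hypotheses $f_{,2}=0$ and $f_{;2}=0$ kill the entire left-hand side as well as the $\mathcal{I}f_{,2}$ and $\mathcal{I}_{,1}f_{;2}$ terms on the right, leaving $0=-\varepsilon f_{,1}$, whence $f_{,1}=0$ because $\varepsilon=\pm1$. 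For (c), horizontal constancy means $f_{,1}=f_{,2}=0$, so the first commutation formula $f_{,1,2}-f_{,2,1}=-Rf_{;2}$ collapses to $0=-Rf_{;2}$, and the assumption $R\neq0$ yields $f_{;2}=0$. In every case the three scalars $f_{;2},f_{,1},f_{,2}$ then all vanish, so $f$ is constant by the characterization above.

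There is no serious obstacle here; the content lies entirely in choosing, for each pair of vanishing hypotheses, the commutation formula whose remaining term is precisely the missing derivative. The only point demanding care is the bookkeeping in the first paragraph: one must verify that ``constant'' is genuinely equivalent to $f_{;2}=f_{,1}=f_{,2}=0$ (using $f_{;1}=0$ from $h(0)$-homogeneity together with the non-degeneracy of the frame), and that iterated derivatives such as $f_{,1;2}$ and $f_{;2,1}$ vanish whenever their inner argument is identically zero, since the commutation identities are applied to these composite quantities.
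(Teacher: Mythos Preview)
Your proof is correct and follows exactly the approach the paper indicates: the paper's own proof consists of the single sentence ``The proof is a direct consequence of \eqref{first comutation}, \eqref{second comutation} and \eqref{third comutation},'' and you have simply spelled out which commutation formula handles which case, together with the preliminary observation that $f_{;1}=0$ by $h(0)$-homogeneity so that constancy reduces to $f_{;2}=f_{,1}=f_{,2}=0$.
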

\begin{proof}
The proof is  direct consequence of \eqref{first comutation}, \eqref{second comutation} and \eqref{third comutation}.
\end{proof}

\begin{definition}\emph{\cite{first paper}}
The anisotropic conformal change of  a conic pseudo-Finsler metric $F $ is defined as
\begin{equation}\label{the anisotropic conformal transformation}
 F\longmapsto \overline{F}(x,y)=e^{\phi(x,y)}F(x,y), \quad F^{2}(\dot{\partial}_{i}\dot{\partial}_{j}\phi\
 +(\dot{\partial}_{i}\phi)(\dot{\partial}_{j}\phi)) m^{i}m^{j}+\varepsilon\neq0,
 \end{equation}
    given that the conformal factor $ \phi(x,y)$ is a smooth $h(0)$  function on $\mathcal{A}$.
In this case, we say that $F$ is anisotropically conformally changed to $\overline{F}$.
\end{definition}
In \cite{first paper}, we have discussed the anisotropic conformal change of a conic pseudo-Finsler surface $(M,F)$ equipped with a modified Berwald frame and determined how this change affects the components of the Berwald frame $(\ell,m)$ of $F$, that is,
\begin{align}
  \overline{\ell}_{i}&=e^{\phi}\,[\ell_{i}+\phi_{;2}\; m_{i}],\qquad
   \overline{\ell}^{i}=e^{-\phi}\,\ell^{i},\label{transform of ell^i}\\
  \overline{m}_{i}&=e^{\phi}\,\sqrt{\frac{\varepsilon}{\rho}}\,m_{i}, \qquad
  \overline{m}^{i}=e^{-\phi}\,\sqrt{\varepsilon\rho}\,[m^{i}-\varepsilon \phi_{;2}\;\ell^{i}],\label{transform of m^i}
  \end{align}
  where,
\begin{align}\label{formula of rho}
\rho&=\frac{1}{\sigma+\varepsilon-(\phi_{;2})^{2}},\qquad
\sigma=\phi_{;2;2}+\varepsilon \mathcal{I}\phi_{;2}+2\,(\phi_{;2})^{2}.
\end{align}
  Consequently, the anisotropic conformal transformation of the main scalar $\mathcal{I}$ is given by \cite{first paper} 
  \begin{align} 
  \overline{\mathcal{I}}&=(\varepsilon\rho)^{\frac{3}{2}} \,[\mathcal{I}(1+\varepsilon \sigma)+\frac{1}{2}\sigma_{;2}+\phi_{;2}(\sigma+2\varepsilon)],\label{transform of main scalar}\\
  &=\sqrt{\varepsilon\rho} \,[\mathcal{I}+2\varepsilon\phi_{;2}-\frac{\varepsilon}{2}(\ln \rho)_{;2}].\label{transform of main scalar 2}
  \end{align}
Furthermore, the anisotropic conformal change of the geodesic spray coefficients, Barthel connection and Berwald connection are given, respectively, by  
   \begin{align}
  \overline{G}^{i}&=G^i+Q\,m^i+P\ell^i,\label{transform of coefficient spray}\\
  \overline{G}_{j}^{i} &=G_{j}^{i}+\frac{1}{F}\left\lbrace 2P\ell^{i}\ell_{j}+(P_{;2}-Q)\ell^{i}m_{j}+2Q\ell_{j}m^{i}+(\varepsilon P+Q_{;2}-\varepsilon\mathcal{I}Q)m^{i}m_{j}\right\rbrace,\label{transform of Barthel connection}\\
  \overline{G}^{i}_{jk}& ={G}^{i}_{jk}+\frac{1}{F^{2}}[(2P\ell^{i}+2Qm^{i})\ell_{j}\ell_{k}+\{(P_{;2}-Q)\ell^{i}+(\varepsilon P+ Q_{;2}-\varepsilon\mathcal{I} Q)m^{i}\}\nonumber\\
 &\qquad(\ell_{j}m_{k}+\ell_{k}m_{j})+\{(\varepsilon P+P_{;2;2}-2Q_{;2}+\varepsilon\mathcal{I}P_{;2})\ell^{i}+(2\varepsilon P_{;2}\nonumber \\
 &\qquad+\varepsilon Q+ Q_{;2;2}-\varepsilon\mathcal{I}_{;2}Q-\varepsilon\mathcal{I} Q_{;2})m^{i}\}m_{j}m_{k}],\label{transform of Berwald connection}
\end{align}
  \begin{align}
2Q&=\varepsilon\rho F^2(\phi_{;2}\phi_{,1}+\phi_{,1;2}-2\phi_{,2}),\label{formula of Q only}\\
2P&=-\rho F^2\phi_{;2}(\phi_{;2}\phi_{,1}+\phi_{,1;2}-2\phi_{,2})+F^2\phi_{,1}.\label{formula of P only}
\end{align}
Clearly, from \eqref{formula of Q only} and \eqref{formula of P only}, we get
\begin{equation}\label{relation between P and Q}
2\varepsilon \phi_{;2}Q+2P=F^2\phi_{,1}.
\end{equation}
Since, any conic pseudo-Finsler metric is horizontally constant, we obtain
  \begin{equation}
\renewcommand{\arraystretch}{1.5}
  \left.\begin{array}{r@{\;}l}
(1)& F^2\ell^k\,\partial_k\phi=F^2\phi_{,1}+2G^k\phi_{;2}\;m_k,\quad
(2)\,F m^k\,\partial_k\phi=\varepsilon F\,\phi_{,2}+G^i_{k}\;\phi_{;2}\;m^k m_i,\\
(3)&\, \ell^k\,\partial_k F^2=4\,G^k\,\ell_{k},\qquad \qquad \qquad \,\,\,\,
(4) \,m^k\partial_k F^2=2FG_k^i\,\ell_{i}\,m^k.
  \end{array}\right\} \label{eq}
\end{equation}
\section{Technical Lemmas and Some Properties}
Assume that $(M,F)$ is a two-dimensional  conic pseudo-Finsler space equipped with the modified Berwald frame $(l,m)$. Under the anisotropic conformal transformation \eqref{the anisotropic conformal transformation}, $(l,m)$ is transformed to $(\overline{l},\overline{m})$. Consequently, one can assume that for all $f\in C^{\infty}(TM_0)$,   the v-scalar derivatives $(f_{;a} , f_{;b})$  and 
h-scalar derivatives $(f_{,a}, f_{,b})$ with respect to $\overline{F}$ are defined by
\begin{align}\label{vertical and horizontal deriv of scalar of F bar}
\overline{F}\dot{\partial}_i f=f_{;\,a}\overline{\ell}_{i}+f_{;\,b}\overline{m}_{i},  \qquad \overline{\delta}_i f=f_{,\,a}\overline{\ell}_{i}+f_{,\,b}\overline{m}_{i},
\end{align}
 where
\begin{align}
f_{;\,a}&=y^{i}\dot{\partial}_i f,\qquad \quad f_{;\,b}=\varepsilon \overline{F}(\dot{\partial}_{i}f)\overline{m}^{i},\label{comp of vertical scalar deriv of f bar}\\ 
f_{,\,a}&=(\overline{\delta}_{i}f)\overline{\ell}^{i}, \qquad \quad f_{,\,b}=\varepsilon (\overline{\delta}_{i}f)\overline{m}^{i}.\label{comp of horizontal scalar deriv of f bar}
\end{align}
\begin{proposition}\label{component of scalar derivative w.r.t F bar}
Let $\overline{F}=e^{\phi}F$ be the anisotropic conformal transformation \eqref{the anisotropic conformal transformation} of  $F$. The h- and v-scalar derivatives of a  function $f\in C^{\infty} (TM_0)$ satisfy the following properties:
\begin{align}
f_{;\,a}&=f_{;1},\label{f;1 transform}\\
f_{;\,b}&=\sqrt{\varepsilon\rho}\,( f_{;2}-\phi_{;2}f_{;1}),\label{f;2 transform}\\
f_{,\,a}&=e^{-\phi} \left[f_{,1}-\frac{2}{F^2}(Pf_{;1}+\varepsilon Qf_{;2})\right],\label{f,1 transform}\\
f_{,\,b}&= e^{-\phi}\sqrt{\varepsilon\rho}\left[f_{,2}-\phi_{;2}f_{,1}-\frac{1}{F^2}\lbrace (P_{;2}-Q-2\phi_{;2}P)f_{;1}+\varepsilon(\varepsilon P+Q_{;2}-\varepsilon\mathcal{I}Q-2\phi_{;2}Q)f_{;2}\rbrace\right].\label{f,2 transform}
\end{align}
\end{proposition}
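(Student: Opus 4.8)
The plan is to establish the four identities in increasing order of difficulty, in each case unwinding the defining formulas \eqref{comp of vertical scalar deriv of f bar}--\eqref{comp of horizontal scalar deriv of f bar} and substituting the frame transformation laws \eqref{transform of ell^i}--\eqref{transform of m^i} together with the Barthel transformation \eqref{transform of Barthel connection}. Throughout I would lean on the orthonormality relations $\ell^i\ell_i=1$, $\ell^i m_i=m^i\ell_i=0$, $m^i m_i=\varepsilon$, and on the two auxiliary scalar identities $\ell^j\dot\partial_j f=\tfrac1F f_{;1}$ (which follows from $\ell^i=y^i/F$) and $m^j\dot\partial_j f=\tfrac{\varepsilon}{F}f_{;2}$ (which is merely the definition of $f_{;2}$ rearranged, using $1/\varepsilon=\varepsilon$).

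The identity $f_{;a}=f_{;1}$ is immediate, since both quantities are defined to be $y^i\dot\partial_i f$ and the conformal factor never enters. For $f_{;b}$ I would multiply \eqref{transform of m^i} by $\overline F=e^\phi F$ to obtain $\overline F\,\overline m^i=F\sqrt{\varepsilon\rho}\,(m^i-\varepsilon\phi_{;2}\ell^i)$, substitute this into the definition $f_{;b}=\varepsilon\overline F(\dot\partial_i f)\overline m^i$, split the contraction into its $m^i$-part and $\ell^i$-part, and recognise $\varepsilon F(\dot\partial_i f)m^i=f_{;2}$ together with $F(\dot\partial_i f)\ell^i=f_{;1}$; cancelling $\varepsilon^2=1$ then yields \eqref{f;2 transform} directly.

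For the horizontal derivatives I would first record $\overline\delta_i=\delta_i-(\overline G^j_i-G^j_i)\dot\partial_j$ and read off the Barthel difference $\overline G^j_i-G^j_i$ from \eqref{transform of Barthel connection}. Contracting this difference once with $\ell^i$ and once with $m^i$, and using the orthonormality relations to kill the cross terms, collapses the four tensorial pieces to $\ell^i(\overline G^j_i-G^j_i)=\tfrac1F(2P\ell^j+2Qm^j)$ and $m^i(\overline G^j_i-G^j_i)=\tfrac{\varepsilon}{F}[(P_{;2}-Q)\ell^j+(\varepsilon P+Q_{;2}-\varepsilon\mathcal I Q)m^j]$. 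Feeding the first of these into $f_{,a}=(\overline\delta_i f)\overline\ell^i=e^{-\phi}\ell^i(\overline\delta_i f)$, using $\ell^i\delta_i f=f_{,1}$ and the two auxiliary scalar identities, produces \eqref{f,1 transform}.

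The final identity \eqref{f,2 transform} is the only genuinely laborious step. Here I would write $f_{,b}=\varepsilon(\overline\delta_i f)\overline m^i=e^{-\phi}\sqrt{\varepsilon\rho}\,[\varepsilon(\overline\delta_i f)m^i-\phi_{;2}(\overline\delta_i f)\ell^i]$ and reuse the two contractions of the Barthel difference already computed. The $\ell^i$-contraction is exactly $e^{\phi}f_{,a}$ from the previous step, while the $m^i$-contraction produces the $f_{,2}$ term together with correction terms in $f_{;1}$ and $f_{;2}$. The hard part is purely the bookkeeping: one must track the $\varepsilon$-powers carefully and merge the $f_{;1}$- and $f_{;2}$-coefficients arising from the $m^i$-piece and from the $\phi_{;2}\ell^i$-piece, so that the shifts $-2\phi_{;2}P$ and $-2\phi_{;2}Q$ assemble precisely as stated. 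No geometric input beyond the frame and connection transformation laws of \eqref{transform of ell^i}--\eqref{transform of Barthel connection} is required.
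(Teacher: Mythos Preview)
Your proposal is correct and follows essentially the same route as the paper. The only cosmetic difference is that for $f_{,a}$ the paper contracts $y^i\overline\delta_i f$ directly and invokes the spray transformation \eqref{transform of coefficient spray} (using $y^i\overline G^j_i=2\overline G^j$), whereas you contract the Barthel difference \eqref{transform of Barthel connection} with $\ell^i$; these are the same computation up to a factor of $F$.
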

\begin{proof}
From \eqref{comp of vertical  scalar deriv of f } and \eqref{comp of vertical scalar deriv of f bar}, we get $f_{;\,a}=f_{;1}.$\\
By  \eqref{comp of vertical  scalar deriv of f } and \eqref{comp of vertical scalar deriv of f bar}, we have
\begin{equation*}
f_{;\,b}=\varepsilon \overline{F}\;(\dot{\partial}_if)\;\overline{m}^{i}
=\sqrt{\varepsilon\rho}[\varepsilon F(\dot{\partial}_if){m}^{i}-\phi_{;2}F(\dot{\partial}_if)\ell^i]
=\sqrt{\varepsilon\rho}\left(  f_{;2}-\phi_{;2}f_{;1}\right).
\end{equation*}
Also, from \eqref{comp of vertical  scalar deriv of f },  \eqref{comp of horizontal scalar deriv of f}, \eqref{transform of coefficient spray} and \eqref{comp of horizontal scalar deriv of f bar},   we obtain
\begin{align*}
f_{,\,a}=&\frac{y^i}{\overline{F}}\,\overline{\delta}_{i}f
=\frac{e^{-\phi}}{F} \,[y^i\partial_{i}f-2\overline{G}^i\dot{\partial}_i f]
=\frac{e^{-\phi}}{F} \,[y^i\partial_{i}f-2{G}^i\dot{\partial}_i f-2P\ell^{i}\dot{\partial}_i f-2Qm^{i}\dot{\partial}_i f]\\
=&e^{-\phi} \left[ f_{,1}-\frac{2}{F^2}(Pf_{;1}+\varepsilon Qf_{;2})\right].
\end{align*}  
Finally, from  \eqref{comp of vertical  scalar deriv of f }, \eqref{comp of horizontal scalar deriv of f},  \eqref{transform of Barthel connection} and \eqref{comp of horizontal scalar deriv of f bar},  we get
\begin{align*}
f_{,\,b}=&\varepsilon \,\overline{m}^{i}\,\overline{\delta}_{i}f
=\varepsilon \,e^{-\phi}\,\sqrt{\varepsilon\rho}\,({m}^{i}-\varepsilon\phi_{;2}\ell^i) [\partial_{i}f-\overline{G}^r_{i}\,\dot{\partial}_r f]\\
= &e^{-\phi}\sqrt{\varepsilon\rho}\left[  f_{,2}-\phi_{;2}\,f_{,1}-\frac{1}{F^2}\left[    (P_{;2}-Q-2\phi_{;2}\,P)f_{;1}+\varepsilon(\varepsilon P+Q_{;2}-\varepsilon\mathcal{I}\,Q-2\phi_{;2}\,Q)f_{;2}\right]  \right] .
\end{align*}
\vspace*{-1.1cm}\[\qedhere\]
\end{proof}

\begin{lemma}\label{formula of I;a I,a I,b}
Under the  anisotropic conformal transformation  \eqref{the anisotropic conformal transformation},  the     h-scalar and v-scalar derivatives of the main scalar  $\overline{\mathcal{I}}$ of  $\overline{F}$ are given by:
\begin{align}
\overline{\mathcal{I}}_{;\,b}&=\sqrt{\varepsilon\rho} \;\overline{\mathcal{I}}_{;2},\label{I;b bar formula}\\
\overline{\mathcal{I}}_{,\,a}&=e^{-\phi} \,[\overline{\mathcal{I}}_{,1}-\frac{2\varepsilon}{F^2} Q\,\overline{\mathcal{I}}_{;2}],\label{I,a bar formula}\\
\overline{\mathcal{I}}_{,\,b}&= e^{-\phi}\sqrt{\varepsilon\rho}\;[\overline{\mathcal{I}}_{,2}-\phi_{;2}\,\overline{\mathcal{I}}_{,1}-\frac{\varepsilon}{F^2}(\varepsilon P+Q_{;2}-\varepsilon\mathcal{I}\,Q-2\phi_{;2}\,Q)\overline{\mathcal{I}}_{;2}]\label{I,b bar formula},
\end{align}
where $\overline{\mathcal{I}}_{;2}, \;\overline{\mathcal{I}}_{,1}, \text{and }\overline{\mathcal{I}}_{,2}$ are the components of the v-scalar and h-scalar derivatives of the main scalar $\overline{\mathcal{I}}$ with respect to $F$. Moreover, we have: 
\begin{align}
\overline{\mathcal{I}}_{;2}&=\frac{\sqrt{\varepsilon\rho}}{2\rho}\,[\rho_{;2}(\mathcal{I}+2\varepsilon\,\phi_{;2}+\frac{\varepsilon}{2}(\ln \rho)_{;2})+2\rho(\mathcal{I}_{;2}+2\varepsilon\phi_{;2;2})-\varepsilon\rho_{;2;2}].\label{I; 2 bar formula}\\
\overline{\mathcal{I}}_{,1}&=\frac{\sqrt{\varepsilon\rho}}{2\rho}\,[\rho_{,1}(\mathcal{I}+2\varepsilon\phi_{;2}+\frac{\varepsilon}{2} (\ln \rho)_{;2})+2\rho(\mathcal{I}_{,1}+2\varepsilon\phi_{;2,1})-\varepsilon\rho_{;2,1}].\label{I,1 bar formula}\\
\overline{\mathcal{I}}_{,2}&=\frac{\sqrt{\varepsilon\rho}}{2\rho}\,[\rho_{,2}(\mathcal{I}+2\varepsilon\phi_{;2}+\frac{\varepsilon}{2}(\ln \rho)_{;2})+2\rho(\mathcal{I}_{,2}+2\varepsilon\phi_{;2,2})-\varepsilon\rho_{;2,2}].\label{I,2 bar formula}
\end{align}
\end{lemma}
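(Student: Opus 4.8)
The plan is to prove the statement in two essentially independent parts. The first three formulas \eqref{I;b bar formula}, \eqref{I,a bar formula}, \eqref{I,b bar formula} are a direct specialization of Proposition \ref{component of scalar derivative w.r.t F bar} to the single function $f=\overline{\mathcal{I}}$. The key observation is that the main scalar of any conic pseudo-Finsler surface is $h(0)$, so $\overline{\mathcal{I}}$ is $h(0)$; hence, by the homogeneity identity $f_{;1}=rf$ (valid for any $h(r)$ function, here with $r=0$), we have $\overline{\mathcal{I}}_{;1}=0$. Substituting $f=\overline{\mathcal{I}}$ into \eqref{f;2 transform}, \eqref{f,1 transform} and \eqref{f,2 transform} and deleting every term that carries the factor $\overline{\mathcal{I}}_{;1}$ then reproduces \eqref{I;b bar formula}, \eqref{I,a bar formula} and \eqref{I,b bar formula} verbatim; no further computation is needed in this part.

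For the remaining formulas \eqref{I; 2 bar formula}--\eqref{I,2 bar formula} I would start from the closed expression \eqref{transform of main scalar 2}, which I abbreviate as $\overline{\mathcal{I}}=\sqrt{\varepsilon\rho}\,B$ with $B:=\mathcal{I}+2\varepsilon\phi_{;2}-\tfrac{\varepsilon}{2}(\ln\rho)_{;2}$, and differentiate it by the three $F$-scalar derivations. Writing $(\cdot)_{\ast}$ for any one of $(\cdot)_{;2}$, $(\cdot)_{,1}$, $(\cdot)_{,2}$, each is a derivation (Leibniz rule together with the chain rule), so $\overline{\mathcal{I}}_{\ast}=(\sqrt{\varepsilon\rho})_{\ast}\,B+\sqrt{\varepsilon\rho}\,B_{\ast}$. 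The prefactor is controlled by the elementary identity $(\sqrt{\varepsilon\rho})_{\ast}=\dfrac{\varepsilon\,\rho_{\ast}}{2\sqrt{\varepsilon\rho}}=\dfrac{\sqrt{\varepsilon\rho}}{2\rho}\,\rho_{\ast}$, the last equality following from $(\sqrt{\varepsilon\rho})^{2}=\varepsilon\rho$. Factoring out $\dfrac{\sqrt{\varepsilon\rho}}{2\rho}$ thus reduces all three cases to the single algebraic verification that $\rho_{\ast}B+2\rho\,B_{\ast}$ equals the bracketed quantity on the right-hand side of \eqref{I; 2 bar formula}--\eqref{I,2 bar formula}, the three cases $\ast={;2},{,1},{,2}$ producing the mixed derivatives $\rho_{;2;2},\rho_{;2,1},\rho_{;2,2}$ respectively.

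The one genuine subtlety, and the step I expect to be the main obstacle, is the bookkeeping of the logarithmic term in $B_{\ast}=\mathcal{I}_{\ast}+2\varepsilon\phi_{;2\ast}-\tfrac{\varepsilon}{2}(\ln\rho)_{;2\ast}$. Using $(\ln\rho)_{;2}=\rho_{;2}/\rho$ and differentiating once more gives $(\ln\rho)_{;2\ast}=\dfrac{\rho_{;2\ast}}{\rho}-\dfrac{\rho_{;2}\,\rho_{\ast}}{\rho^{2}}$. In the sum $\rho_{\ast}B+2\rho\,B_{\ast}$ the term $\rho_{\ast}\!\left(-\tfrac{\varepsilon}{2}(\ln\rho)_{;2}\right)$ coming from $B$ contributes $-\tfrac{\varepsilon}{2}\,\rho_{\ast}\rho_{;2}/\rho$, while the term $2\rho\!\left(-\tfrac{\varepsilon}{2}(\ln\rho)_{;2\ast}\right)=-\varepsilon\rho\,(\ln\rho)_{;2\ast}$ coming from $B_{\ast}$ contributes $-\varepsilon\rho_{;2\ast}+\varepsilon\,\rho_{;2}\rho_{\ast}/\rho$; the two cross terms add to $+\tfrac{\varepsilon}{2}\,\rho_{\ast}(\ln\rho)_{;2}$. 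This is exactly what flips the sign of the $(\ln\rho)_{;2}$ contribution from minus (as it sits in $B$) to plus (as it appears in the final bracket), the remaining pieces collapsing to $2\rho(\mathcal{I}_{\ast}+2\varepsilon\phi_{;2\ast})-\varepsilon\rho_{;2\ast}$. Carrying this through with $\ast={;2}$ gives \eqref{I; 2 bar formula}, and the identical computation with $\ast={,1}$ and $\ast={,2}$ gives \eqref{I,1 bar formula} and \eqref{I,2 bar formula}; beyond this single sign cancellation the calculation is routine.
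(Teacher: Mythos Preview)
Your proposal is correct and follows exactly the approach sketched in the paper's own proof: apply Proposition~\ref{component of scalar derivative w.r.t F bar} to $f=\overline{\mathcal{I}}$, using that $\overline{\mathcal{I}}$ is $h(0)$, and then differentiate the closed formula \eqref{transform of main scalar 2}. You have simply filled in the routine computational details (in particular the sign cancellation in the $(\ln\rho)_{;2}$ term) that the paper leaves implicit.
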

\begin{proof}
It follows by applying  Proposition \ref{component of scalar derivative w.r.t F bar} to the formula \eqref{transform of main scalar 2} of $\mathcal{\overline{I}}$ taking into account that $\mathcal{\overline{I}}$ is $h(0).$ 
\end{proof}

\begin{remark}\label{remark rho_,1 and rho_,2}
\begin{description}
\item[(a)]From \eqref{f;1 transform}, the v-scalar  derivative of a function $f\in C^{\infty} (TM_0)$ in the direction $\ell_i$  is anisotropically  conformally invariant, i.e. , $f_{;a}=f_{;1}.$
\item[(b)]From \eqref{f;2 transform},  the v-scalar derivative of a function $f\in C^{\infty} (TM_0)$ in the direction $m_i$  is $\sqrt{\varepsilon\rho}$ invariant under the anisotropic conformal transformation \eqref{the anisotropic conformal transformation} if and only if either the conformal factor is a function of $x$ only or $f$ is an $h(0)$ function.
\item[(c)]From  \em{\eqref{formula of rho}}, we get
\begin{align*}
\rho_{,1}=&-\rho^2\,(\phi_{;2;2,1}+\varepsilon\mathcal{I}_{,1}\,\phi_{;2}+\varepsilon\mathcal{I}\,\phi_{;2,1}+2\phi_{;2}\,\phi_{;2,1}).\\
\rho_{,2}=&-\rho^2(\phi_{;2;2,2}+\varepsilon\mathcal{I}_{,2}\,\phi_{;2}+\varepsilon\mathcal{I}\,\phi_{;2,2}+2\phi_{;2}\,\phi_{;2,2}).\\
\rho_{;2,1}=&-2\rho\rho_{,1}(\phi_{;2;2;2}+\varepsilon\mathcal{I}_{;2}\phi_{;2}+\varepsilon\mathcal{I}\phi_{;2;2}+2\phi_{;2}\,\phi_{;2;2})-\rho^2(\phi_{;2;2;2,1}+\varepsilon\mathcal{I}_{;2,1}\,\phi_{;2}\\
&+\varepsilon\mathcal{I}_{;2}\,\phi_{;2,1}+\varepsilon\mathcal{I}_{,1}\,\phi_{;2;2}+\varepsilon\mathcal{I}\,\phi_{;2;2,1}+2\phi_{;2,1}\,\phi_{;2;2}+2\phi_{;2}\,\phi_{;2;2,1}).
\end{align*}
Similarly, we can get $\rho_{;2},$  $\rho_{;2;2}$ and $\rho_{;2,2}$.
\item[(d)]Under the anisotropic conformal transformation \eqref{the anisotropic conformal transformation}, the condition $\sigma=(\phi_{;2})^2$ mentioned in  \cite[Theorem 2.11]{first paper} is equivalent to the property $ \overline{\mathfrak{g}}=e^{4\phi}\mathfrak{g}$,  where $\mathfrak{g}=\det(g_{ij}).$
\end{description}
\end{remark}

\begin{lemma}\label{proposition phi,phi_;2 and phi_;2;2 d_h closed}
Let  $F$ be a Landsberg surface  and f be an $h(0)$ smooth function on $TM_0$. If $f$ is horizontally constant, then $f_{;2}$ and $f_{;2;2}$ are horizontally constant.
\end{lemma}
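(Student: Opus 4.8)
The plan is to read everything off the second and third commutation formulas \eqref{second comutation} and \eqref{third comutation}, both of which hold for any $h(0)$ function, together with the Landsberg hypothesis, which for a surface means precisely $\mathcal{I}_{,1}=0$. The one preliminary I need is a homogeneity bookkeeping fact: since $\dot{\partial}_i$ lowers the degree of homogeneity by one while multiplication by $F$ raises it by one, and since the coframe components $\ell_i,m_i$ in \eqref{vertical and horizontal deri of f} are $h(0)$, the v-scalar derivatives $f_{;1},f_{;2}$ carry the same homogeneity degree as $f$. Hence if $f$ is $h(0)$ then $f_{;2}$ is $h(0)$, and, iterating, so is $f_{;2;2}$. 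This is what legitimizes applying the commutation formulas not only to $f$ but also to $f_{;2}$.

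First I would show that $f_{;2}$ is horizontally constant. Since $f$ is horizontally constant we have $f_{,1}=f_{,2}=0$. Substituting this into \eqref{second comutation}, the identity $f_{,1;2}-f_{;2,1}=f_{,2}$ collapses (its left term is $(f_{,1})_{;2}=0$ and its right side is $0$) to give $(f_{;2})_{,1}=0$. Substituting into \eqref{third comutation}, the identity $f_{,2;2}-f_{;2,2}=-\varepsilon(f_{,1}+\mathcal{I}f_{,2}+\mathcal{I}_{,1}f_{;2})$ has vanishing left term $(f_{,2})_{;2}=0$, its first two right-hand terms vanish, and the Landsberg condition $\mathcal{I}_{,1}=0$ annihilates the remaining term $\mathcal{I}_{,1}f_{;2}$; therefore $(f_{;2})_{,2}=0$. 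Thus $f_{;2}$ is horizontally constant.

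The claim about $f_{;2;2}$ is then a verbatim iteration. Writing $g:=f_{;2}$, the homogeneity observation gives that $g$ is $h(0)$, and the previous paragraph gives $g_{,1}=g_{,2}=0$; these are exactly the hypotheses just used, now with $g$ in place of $f$. Running the same two-line argument through \eqref{second comutation} and \eqref{third comutation}, again invoking $\mathcal{I}_{,1}=0$, produces $(g_{;2})_{,1}=0$ and $(g_{;2})_{,2}=0$, i.e. $f_{;2;2}=g_{;2}$ is horizontally constant, as required.

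I do not expect a genuine obstacle here; the work is mechanical once the order of the derivatives in \eqref{second comutation} and \eqref{third comutation} is tracked carefully. The single conceptual point---and the place where the hypotheses are actually spent---is the Landsberg condition: without $\mathcal{I}_{,1}=0$, the third commutation formula would leave the obstruction term $-\varepsilon\mathcal{I}_{,1}f_{;2}$, which is generically nonzero, so that $(f_{;2})_{,2}$ would not vanish and the conclusion would fail. Landsbergianness is therefore exactly what makes $m$-directional v-differentiation preserve horizontal constancy, and the self-similarity of the argument then carries this from $f$ to $f_{;2}$ to $f_{;2;2}$.
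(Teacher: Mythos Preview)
Your proof is correct and follows essentially the same route as the paper: apply the second and third commutation formulas \eqref{second comutation}--\eqref{third comutation} to $f$, use $f_{,1}=f_{,2}=0$ and the Landsberg condition $\mathcal{I}_{,1}=0$ to conclude $f_{;2,1}=f_{;2,2}=0$, then iterate with $f_{;2}$ in place of $f$. Your added homogeneity bookkeeping (that $f_{;2}$ is again $h(0)$, so the commutation formulas legitimately apply to it) is a detail the paper leaves implicit but which is indeed needed.
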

\begin{proof}
Let $f$ be horizontally constant and $F$  a Landsberg metric. 
Applying  \eqref{second comutation} and \eqref{third comutation} to $f$, we obtain $f_{;2,2}=f_{;2,1}=0,$ which means that $f_{;2}$ is horizontally constant.
Similarly, applying \eqref{second comutation} and \eqref{third comutation} for $f= f_{;2}$,  we get $f_{;2;2,2}=f_{;2;2,1}=0,$
that is,  $f_{;2;2}$ is horizontally constant. 
\end{proof}
Using Remark \ref{remark rho_,1 and rho_,2} and Lemma \ref{proposition phi,phi_;2 and phi_;2;2 d_h closed}, where $f=\phi_{;2}$, we get the following result.
\begin{corollary}\label{proposition phi_;2  d_h closed and F riemannian}
Let  $(M,F)$ be a Riemannian  surface  and \eqref{the anisotropic conformal transformation}  be the anisotropic conformal transformation  provided that, $\phi_{;2}$ is horizontally constant. Then $\rho$ and $\rho_{;2}$ are horizontally constant. That is, $\rho_{,1}=\rho_{,2}=\rho_{;2,1}=\rho_{;2,2}=0$.
\end{corollary}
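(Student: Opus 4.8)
The plan is to reduce the statement to two facts about a Riemannian surface and then feed them into Lemma~\ref{proposition phi,phi_;2 and phi_;2;2 d_h closed} together with the explicit derivative formulas of Remark~\ref{remark rho_,1 and rho_,2}(c). First, since $(M,F)$ is Riemannian its Cartan tensor vanishes, so by \eqref{cartan tensor Finsler surface} the main scalar is identically zero, $\mathcal{I}=0$; in particular every v- and h-scalar derivative of $\mathcal{I}$ vanishes, and $\mathcal{I}_{,1}=0$ so $F$ is Landsbergian. This last point is exactly what licenses the use of Lemma~\ref{proposition phi,phi_;2 and phi_;2;2 d_h closed}.

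Next I would apply that lemma to the function $f=\phi_{;2}$. Since $\phi$ is $h(0)$ we have that $\phi_{;2}=\varepsilon F(\dot{\partial}_i\phi)m^i$ is again $h(0)$, and by hypothesis it is horizontally constant. The lemma then yields that $f_{;2}=\phi_{;2;2}$ and $f_{;2;2}=\phi_{;2;2;2}$ are horizontally constant as well. Combined with the hypothesis on $\phi_{;2}$, this means that all the h-scalar derivatives that will occur below vanish, namely
\[
\phi_{;2,1}=\phi_{;2,2}=\phi_{;2;2,1}=\phi_{;2;2,2}=\phi_{;2;2;2,1}=\phi_{;2;2;2,2}=0 .
\]

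I would then substitute $\mathcal{I}=0$ (hence $\mathcal{I}_{,1}=\mathcal{I}_{,2}=0$) together with these vanishing derivatives into the expressions for $\rho_{,1}$ and $\rho_{,2}$ given in Remark~\ref{remark rho_,1 and rho_,2}(c). Each summand there carries a factor that is either a derivative of $\mathcal{I}$ or one of $\phi_{;2,1},\phi_{;2,2},\phi_{;2;2,1},\phi_{;2;2,2}$, all of which are now zero; hence $\rho_{,1}=\rho_{,2}=0$, i.e. $\rho$ is horizontally constant. For the last claim I would form $\rho_{;2,1}$ and $\rho_{;2,2}$ exactly as indicated at the end of Remark~\ref{remark rho_,1 and rho_,2}(c). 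In $\rho_{;2,1}$, say, the leading term carries the factor $\rho_{,1}=0$ just established, while the remaining terms are killed either by $\mathcal{I}=0$ (and its derivatives) or by the h-derivatives $\phi_{;2,1},\phi_{;2;2,1},\phi_{;2;2;2,1}$, all of which vanish by the second step; analogously $\rho_{;2,2}=0$. Thus $\rho_{;2}$ is horizontally constant.

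The substantive step, and the only place the hypotheses are genuinely used, is the second one: the propagation of horizontal constancy from $\phi_{;2}$ up to $\phi_{;2;2}$ and $\phi_{;2;2;2}$. This rests on the Landsberg property through the commutation formulas \eqref{second comutation} and \eqref{third comutation} as packaged in Lemma~\ref{proposition phi,phi_;2 and phi_;2;2 d_h closed}; without the Landsberg hypothesis (here guaranteed by $F$ being Riemannian) the mixed h- and v-scalar derivatives would fail to commute favorably and the chain of horizontally constant quantities would break, leaving the stray terms $\phi_{;2;2,i}$ and $\phi_{;2;2;2,i}$ in the formulas of Remark~\ref{remark rho_,1 and rho_,2}(c) uncontrolled.
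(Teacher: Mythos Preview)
Your proof is correct and follows essentially the same approach as the paper: apply Lemma~\ref{proposition phi,phi_;2 and phi_;2;2 d_h closed} with $f=\phi_{;2}$ (using that a Riemannian surface is Landsbergian) to propagate horizontal constancy to $\phi_{;2;2}$ and $\phi_{;2;2;2}$, then substitute $\mathcal{I}=0$ and these vanishing derivatives into the formulas of Remark~\ref{remark rho_,1 and rho_,2}(c). The paper states this in a single sentence; your write-up simply unpacks the same steps in more detail.
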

\begin{definition}
A smooth  function $f$ on  $ TM_0$ is called a first integral of a geodesic spray  $S$ if $S(f)=0.$ 
\end{definition}
In our context, a smooth function $f$ is a first integral of the geodesic spray $S$ if and only if $f_{,1}=0$, this is because $S(f)=y^i\delta_if=F\ell^i\delta_if=F f_{,1}=0$. 
\begin{remark}
   Let $f$ be  a smooth  function on  $ TM_0$ and \eqref{the anisotropic conformal transformation} be the anisotropic conformal transformation of a conic pseudo-Finsler metric $F$. Then, we have the following assertions:
\begin{description}
    \item[(a)] The function $f$ is a first integral of the geodesic spray $\overline{S}$  if and only if $F S(f)=2(f_{;1}P+\varepsilon f_{;2} Q )$.
    \item[(b)] The property that $f$ is a first integral of the geodesic spray is $e^{-\phi}$ invariant ($f_{,a}=e^{-\phi}f_{,1}$) if and only if $f_{;1}P+\varepsilon f_{;2} Q=0$.
    \item[(c)] The property that an  $h(0)$ function  $f$ is a first integral of the geodesic spray is $e^{-\phi}$ invariant if and only if either $f$ is a function of $x $ only or the two sprays $S$ and $\overline{S}$ are projectively equivalent.
    \item[(d)]Assume that $f$ is an $h(0)$ function  of $y$ only, then  $ f_{,1}\ell_i+f_{,2}m_i=\delta_{i}f=-\frac{f_{;2}}{F}G^r_im_r.$ \\
    Consequently,  multiplying by $\ell^i$, the function $f$ is a first integral of $S$ if and only if either $f_{;2}=0$ or $G^rm_r=0$, which is equivalent to $f$ is either a constant function or the Finsler metric $F$ is projectively flat \emph{\cite[Remark 4.6]{first paper}}.
\end{description} 
\end{remark}
Let us end this section by introducing the following  identity.
\begin{lemma}\label{lemma of identites}
 Let $(M,F)$ be a conic pseudo-Finsler surface. Under the anisotropic conformal transformation \eqref{the anisotropic conformal transformation}, we have the following identity:
 \begin{equation}
\phi_{;2}\,P+P_{;2}+\varepsilon \phi_{;2}\,Q_{;2} -(\mathcal{I}\,\phi_{;2}+1)\,Q-F^2\,\phi_{,2}=0.\label{identity 1}
\end{equation}
\end{lemma}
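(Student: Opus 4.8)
The plan is to prove \eqref{identity 1} without unpacking the bulky explicit expressions \eqref{formula of Q only}--\eqref{formula of P only} for $P$ and $Q$ into $\phi$ and its derivatives. The only structural facts I shall use are the linear relation \eqref{relation between P and Q} between $P$ and $Q$, the formula \eqref{formula of Q only} for $Q$, and the definition \eqref{formula of rho} of $\rho$ and $\sigma$. The single analytic ingredient is that the v-scalar derivative $f\mapsto f_{;2}=\varepsilon F(\dot\partial_i f)m^i$ is a derivation (it inherits the Leibniz rule from $\dot\partial_i$, being the first-order operator $\varepsilon F m^i\dot\partial_i$ with no zeroth-order term) and that $(F^2)_{;2}=0$. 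The latter is immediate: $(F^2)_{;2}=\varepsilon F\,(\dot\partial_i F^2)\,m^i=2\varepsilon F^2\ell_i m^i=0$, since $\ell_i m^i=0$. In particular, $;2$-differentiation annihilates $F^2$.

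First I would differentiate the relation \eqref{relation between P and Q}, i.e. $2P+2\varepsilon\phi_{;2}Q=F^2\phi_{,1}$, in the $m$-direction. Using the Leibniz rule and $(F^2)_{;2}=0$ this gives $2P_{;2}+2\varepsilon\phi_{;2;2}Q+2\varepsilon\phi_{;2}Q_{;2}=F^2\phi_{,1;2}$, hence
\[
P_{;2}=\tfrac12 F^2\phi_{,1;2}-\varepsilon\phi_{;2;2}Q-\varepsilon\phi_{;2}Q_{;2}.
\]
This is the crucial step: on substituting it into the left-hand side of \eqref{identity 1}, the two occurrences of $\varepsilon\phi_{;2}Q_{;2}$ cancel, so the entire $Q_{;2}$-dependence disappears and no explicit formula for $Q_{;2}$ is ever needed. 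The left-hand side then collapses to
\[
\phi_{;2}P-\big(\varepsilon\phi_{;2;2}+\mathcal{I}\phi_{;2}+1\big)Q+\tfrac12 F^2\phi_{,1;2}-F^2\phi_{,2}.
\]

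Next I would eliminate $\phi_{;2}P$ using \eqref{relation between P and Q} once more, namely $\phi_{;2}P=\tfrac12 F^2\phi_{;2}\phi_{,1}-\varepsilon(\phi_{;2})^2 Q$. The three $F^2$-terms then assemble into $\tfrac12 F^2\big(\phi_{;2}\phi_{,1}+\phi_{,1;2}-2\phi_{,2}\big)$, which by \eqref{formula of Q only} equals $\varepsilon Q/\rho$. Hence the whole expression becomes $Q$ times the scalar $\big(\varepsilon/\rho-\varepsilon(\phi_{;2})^2-\varepsilon\phi_{;2;2}-\mathcal{I}\phi_{;2}-1\big)$. Finally I would substitute $1/\rho=\sigma+\varepsilon-(\phi_{;2})^2$ together with $\sigma=\phi_{;2;2}+\varepsilon\mathcal{I}\phi_{;2}+2(\phi_{;2})^2$ from \eqref{formula of rho}; a short computation (using $\varepsilon^2=1$) gives $\varepsilon/\rho=\varepsilon\phi_{;2;2}+\mathcal{I}\phi_{;2}+\varepsilon(\phi_{;2})^2+1$, so the bracketed scalar vanishes identically and the identity follows.

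The computation is essentially linear once one commits to differentiating \eqref{relation between P and Q} rather than expanding $P$ and $Q$, so there is no genuine analytic obstacle; the points that require care are purely bookkeeping, namely verifying $(F^2)_{;2}=0$ and confirming that the $Q_{;2}$ terms cancel exactly. Conceptually, \eqref{identity 1} is nothing more than the $m$-derivative of the defining relation \eqref{relation between P and Q}, repackaged through the definition of $\rho$.
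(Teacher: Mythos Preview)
Your proof is correct and follows essentially the same route as the paper: differentiate \eqref{relation between P and Q} in the $m$-direction to obtain $P_{;2}$, substitute so that the $Q_{;2}$ terms cancel, use \eqref{relation between P and Q} again for $\phi_{;2}P$, and then recognize the remaining $F^2$-terms as $\varepsilon Q/\rho$ via \eqref{formula of Q only} and \eqref{formula of rho}. The only difference is cosmetic: you make the derivation property of $(\,\cdot\,)_{;2}$ and the vanishing of $(F^2)_{;2}$ explicit, whereas the paper uses them tacitly.
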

\begin{proof}
Applying $(;2)$ to \eqref{relation between P and Q}, we obtain
\begin{equation}\label{formula P_;2}
    P_{;2}=-\varepsilon\phi_{;2;2}\,Q-\varepsilon\,\phi_{;2}\,Q_{;2}+\frac{F^2}{2}\,\phi_{,1;2}.
\end{equation}
By using the expression of $P$ in \eqref{relation between P and Q}   and  \eqref{formula P_;2}, we get 
\begin{align*}
    P_{;2}+\phi_{;2}\,P+\varepsilon \,\phi_{;2}\,Q_{;2}-\mathcal{I}\,\phi_{;2}\,Q -F^2\,\phi_{,2}-Q=&-\varepsilon\phi_{;2;2}\,Q+\frac{F^2}{2}\phi_{,1;2}+\frac{F^2}{2}\phi_{;2}\,\phi_{,1}-\varepsilon (\phi_{;2})^2\,Q\\
    &-\mathcal{I}\phi_{;2}\,Q -F^2\,\phi_{,2}-Q\\
    =&-\varepsilon \,Q(\phi_{;2;2}+(\phi_{;2})^2+\varepsilon\,\mathcal{I}\,\phi_{;2}+\varepsilon)\\
    &+\frac{F^2}{2}(\phi_{;2}\,\phi_{,1}+\phi_{,1;2}-2\phi_{,2}).
\end{align*}
From \eqref{formula of rho} and \eqref{formula of Q only}, we deduce
$
    P_{;2}+\phi_{;2}\,P+\varepsilon\, \phi_{;2}\,Q_{;2} -\mathcal{I}\,\phi_{;2}\,Q-F^2\,\phi_{,2}-Q
    =-\frac{\varepsilon}{\rho} Q+\frac{\varepsilon}{\rho}Q=0.
$
\end{proof}
\section{ Anisotropically Conformally Berwald }
Our next goal is to show  how the anisotropic conformal transformation \eqref{the anisotropic conformal transformation} can transform a Riemannian metric  $F$ to a Finsler metric  $\overline{F}$. Specifically, we discuss  the case where a pseudo-Riemannian surface is anisotropically conformally transformed to a conic pseudo-Berwald surface. Consequently, we study the property that the conic pseudo-Berwald $\overline{F}$ is Riemann metrizable by $F$. Furthermore, we find under what conditions the conic-pseudo Finsler $F$ is anisotropically conformally changed to a conic pseudo-Berwald $\overline{F}.$
\begin{definition}
A Finsler space $(M,F)$ is said to be  anisotropically conformally Berwald if  $F$ is anisotropically conformally related to a Berwald metric  $\overline{F}.$
\end{definition}
\begin{lemma}
 Under the anisotropic conformal change \eqref{the anisotropic conformal transformation},   the components of the Berwald curvatures  $\overline{B}^{i}_{jkr}$  and $B^{i}_{jkr}$ are related by
\begin{flalign}\label{transform of berwald curvature of spray derivative}
  F^3\overline{B}^{i}_{jkr}  =&F^3 B^{i}_{jkr}+ [\{ (-3+\mathcal{I}_{;2})\varepsilon\,Q+(1+\mathcal{I}_{;2}+2\varepsilon\mathcal{I}^{2})\varepsilon\,P_{;2}+P_{;2;2;2}-3Q_{;2;2}\nonumber \\ &+3\varepsilon\mathcal{I}P_{;2;2}-3\varepsilon\mathcal{I}Q_{;2}+2\mathcal{I}P\}\ell^{i}+\{3P-(\mathcal{I}+\varepsilon\mathcal{I}_{;2;2}+\mathcal{I}\mathcal{I}_{;2})Q\\
 &-(2\varepsilon\mathcal{I}_{;2}+\mathcal{I}^{2}-\varepsilon) Q_{;2}+3\varepsilon P_{;2;2}+Q_{;2;2;2}+3\mathcal{I}P_{;2}\}m^{i}]m_{j}m_{k}m_{r}.\nonumber
\end{flalign}
\end{lemma}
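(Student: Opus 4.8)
The plan is to exploit the fact that the vertical derivative $\dot{\partial}_r=\partial/\partial y^r$ does not depend on the metric, so that both Berwald curvatures arise from the respective Berwald connections by the \emph{same} operator: $\overline{B}^{i}_{jkr}=\dot{\partial}_r\overline{G}^{i}_{jk}$ and $B^{i}_{jkr}=\dot{\partial}_r G^{i}_{jk}$. Subtracting, $\overline{B}^{i}_{jkr}-B^{i}_{jkr}=\dot{\partial}_r(\overline{G}^{i}_{jk}-G^{i}_{jk})$, and the right-hand side involves only the explicit correction supplied by \eqref{transform of Berwald connection}. Writing that correction as $\overline{G}^{i}_{jk}-G^{i}_{jk}=F^{-2}T^{i}_{jk}$, where $T^{i}_{jk}$ denotes the bracketed expression in \eqref{transform of Berwald connection}, and using $\dot{\partial}_r(F^{-2})=-2F^{-3}\ell_r$ (since $\dot{\partial}_rF=\ell_r$), I would first reduce the claim to
\[
F^3(\overline{B}^{i}_{jkr}-B^{i}_{jkr})=-2\ell_r\,T^{i}_{jk}+F\dot{\partial}_r T^{i}_{jk}.
\]

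Next I would differentiate $T^{i}_{jk}$ termwise by the product rule. Every factor in $T^{i}_{jk}$ is of one of two types: a scalar coefficient (each of $P$, $Q$ and their iterated $(;2)$-derivatives is $h(2)$, while $\mathcal{I}$ and its $(;2)$-derivatives are $h(0)$), or one of the frame fields $\ell^{i},m^{i},\ell_j,\ell_k,m_j,m_k$. For the coefficients I would use $F\dot{\partial}_r c=c_{;1}\ell_r+c_{;2}m_r$ from \eqref{vertical and horizontal deri of f}, noting that $c_{;1}=2c$ for every $h(2)$ coefficient; for the frame fields I would invoke Lemma~\ref{properties.2.dim.Fins}, which yields $F\dot{\partial}_r\ell_j=\varepsilon m_jm_r$, $F\dot{\partial}_r m_j=-(\ell_j-\varepsilon\mathcal{I}m_j)m_r$, and the analogous identities for $\ell^{i}$ and $m^{i}$.

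The decisive simplification is that every term carrying $\ell_r$ cancels. The only source of $\ell_r$ inside $F\dot{\partial}_r T^{i}_{jk}$ is the part $c_{;1}\ell_r=2c\,\ell_r$ produced when the coefficients are differentiated, and summing these reproduces exactly $2\ell_r T^{i}_{jk}$, which annihilates the term $-2\ell_r T^{i}_{jk}$. After this, every surviving term carries a factor $m_r$, and I would regroup by the three remaining tensor types $\ell_j\ell_k m_r$, $(\ell_j m_k+\ell_k m_j)m_r$, and $m_j m_k m_r$. A short check using only the homogeneity relations among the coefficients (for instance $2P_{;2}-2Q=2(P_{;2}-Q)$, matching the $(;2)$-derivative of the $\ell_j\ell_k$-coefficient $2P$ against twice the $\ell_jm_k+\ell_km_j$-coefficient $P_{;2}-Q$, and similarly on the $m^{i}$-side) shows that the coefficients of $\ell_j\ell_k m_r$ and of $(\ell_j m_k+\ell_k m_j)m_r$ vanish identically. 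This is consistent with the fact that the Berwald curvature of a surface is a pure $m_j m_k m_r$ object, by \eqref{berwald curvature 2}.

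The remaining step is purely computational: collecting the coefficient of $m_j m_k m_r$. Its $\ell^{i}$-part receives contributions from the $2\varepsilon(m_jm_km_r)$ piece of $F\dot{\partial}_r(\ell_jm_k+\ell_km_j)$, from the $(;2)$-derivatives of the $m_jm_k$-coefficients, and from the $2\varepsilon\mathcal{I}(m_jm_km_r)$ piece of $F\dot{\partial}_r(m_jm_k)$, with an entirely parallel computation for its $m^{i}$-part. I expect the main obstacle to be the sheer bookkeeping at this stage — keeping track of the many $(;2)$- and $(;2;2)$-derivatives together with their $\varepsilon$- and $\mathcal{I}$-factors and signs — rather than any conceptual difficulty; the identity \eqref{identity 1} of Lemma~\ref{lemma of identites} is available to tidy intermediate expressions should they need it. Matching the collected $\ell^{i}$- and $m^{i}$-coefficients against the stated expression then completes the proof.
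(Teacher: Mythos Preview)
Your proposal is correct and follows essentially the same route as the paper: the paper's proof is a single sentence instructing one to multiply \eqref{transform of Berwald connection} by $F$, differentiate with respect to $y^{r}$, and invoke Lemma~\ref{properties.2.dim.Fins}, which is exactly the computation you lay out in detail. One small remark: the identity of Lemma~\ref{lemma of identites} involves horizontal derivatives and is not needed here, since the calculation is purely vertical.
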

\begin{proof}
This formula can be obtained by multiplying \eqref{transform of Berwald connection} by $F$, then differentiating with respect to $y^{r}$ and using  Lemma \ref{properties.2.dim.Fins}.
\end{proof}

\begin{remark}
\begin{description}
    \item[(a)] The property  of a conic pseudo-Finsler surface being Berwaldian is preserved under the anisotropic conformal change \eqref{the anisotropic conformal transformation}, is equivalent to
    \begin{equation*}
(-3+\mathcal{I}_{;2})\varepsilon\,Q-3\varepsilon\mathcal{I}Q_{;2}-3Q_{;2;2}+2\mathcal{I}P+ (1+\mathcal{I}_{;2}+2\varepsilon\mathcal{I}^{2})\varepsilon\,P_{;2}+3\varepsilon\mathcal{I}P_{;2;2}+P_{;2;2;2}=0, 
\end{equation*}
\begin{equation*}
   -(\mathcal{I}+\varepsilon\mathcal{I}_{;2;2}+\mathcal{I}\mathcal{I}_{;2})Q
 -(2\varepsilon\mathcal{I}_{;2}+\mathcal{I}^{2}-\varepsilon) Q_{;2}+Q_{;2;2;2}+3P+3\mathcal{I}P_{;2}+3\varepsilon P_{;2;2}=0. 
\end{equation*}
\item[(b)]  If $P=Q=0$ (which is equivalent to  the anisotropic conformal factor   $\phi$ being  horizontally constant), then property  of $F$ being  Berwaldian is invariant.
\end{description}
\end{remark}

Under the anisotropic conformal change \eqref{the anisotropic conformal transformation}, we can determine the conditions under which a Riemannian surface is anisotropically conformally Berwald, in the following results:

 \begin{proposition}\label{first theorem of F bar berwald}
Assume that the  conic pseudo-Finsler  surface $(M,F)$ is anisotropically related  to $(M,\overline{F})$. If  $ F$ is  Riemannian, then $\overline{F}$ is Berwaldian under the conditions that  $$P +\varepsilon P_{;2;2}=0, \quad \quad Q +\varepsilon  Q_{;2;2}=0.$$ 
\end{proposition}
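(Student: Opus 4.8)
The plan is to bypass the scalar-derivative formulas of Lemma~\ref{formula of I;a I,a I,b} and argue directly from the Berwald curvature transformation \eqref{transform of berwald curvature of spray derivative}, which collapses dramatically once $F$ is Riemannian. Recall that $\overline{F}$ is Berwaldian if and only if $\overline{B}^{i}_{jkr}=0$. Since $F$ is Riemannian, its Cartan tensor vanishes, so by \eqref{cartan tensor Finsler surface} the main scalar satisfies $\mathcal{I}=0$; consequently every derivative $\mathcal{I}_{;2}$, $\mathcal{I}_{;2;2}$, $\mathcal{I}_{,1}$, $\mathcal{I}_{,2}$ vanishes. In particular $F$ is itself Berwaldian, so by \eqref{berwald curvature 2} we have $B^{i}_{jkr}=0$, which removes the first term on the right-hand side of \eqref{transform of berwald curvature of spray derivative}.

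Feeding $\mathcal{I}=0$ (hence $\mathcal{I}_{;2}=\mathcal{I}_{;2;2}=0$) into \eqref{transform of berwald curvature of spray derivative} would reduce it to
\begin{equation*}
F^{3}\overline{B}^{i}_{jkr}=\left[\left(-3\varepsilon Q+\varepsilon P_{;2}+P_{;2;2;2}-3Q_{;2;2}\right)\ell^{i}+\left(3P+\varepsilon Q_{;2}+3\varepsilon P_{;2;2}+Q_{;2;2;2}\right)m^{i}\right]m_{j}m_{k}m_{r}.
\end{equation*}
Because $(\ell^{i},m^{i})$ is a frame, $\overline{F}$ is Berwaldian exactly when the two bracketed scalars vanish, so the statement is reduced to a purely scalar identity involving $P$, $Q$ and their vertical derivatives.

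To close the argument I would then invoke the two hypotheses. The condition $Q+\varepsilon Q_{;2;2}=0$ gives $Q_{;2;2}=-\varepsilon Q$, and applying the operator $(;2)$ once more yields $Q_{;2;2;2}=-\varepsilon Q_{;2}$; likewise $P+\varepsilon P_{;2;2}=0$ gives $P_{;2;2}=-\varepsilon P$ and $P_{;2;2;2}=-\varepsilon P_{;2}$. Substituting these into the coefficient of $\ell^{i}$ and using $\varepsilon^{2}=1$ yields $-3\varepsilon Q+\varepsilon P_{;2}-\varepsilon P_{;2}+3\varepsilon Q=0$, and into the coefficient of $m^{i}$ yields $3P+\varepsilon Q_{;2}-3P-\varepsilon Q_{;2}=0$. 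Hence $\overline{B}^{i}_{jkr}=0$ and $\overline{F}$ is Berwaldian.

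The computation presents no genuine obstacle; the only real decision is methodological. Running the proof instead through $\overline{\mathcal{I}}_{,a}$ and $\overline{\mathcal{I}}_{,b}$ from Lemma~\ref{formula of I;a I,a I,b} would force one to control $\rho$ together with its horizontal and vertical derivatives, which is far messier. The Berwald-curvature route makes transparent why the two hypotheses are exactly the right ones: after setting $\mathcal{I}=0$, each surviving term appears (up to a single $(;2)$-derivative) in the telescoping form $X+\varepsilon X_{;2;2}$, so the two conditions annihilate both coefficients in a two-line cancellation.
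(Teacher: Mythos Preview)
Your proof is correct and follows essentially the same route as the paper: both set $\mathcal{I}=0$ in \eqref{transform of berwald curvature of spray derivative} to obtain the reduced form \eqref{Berwald equivalence}, then conclude from the two hypotheses. You supply the explicit substitution (using $P_{;2;2}=-\varepsilon P$, $Q_{;2;2}=-\varepsilon Q$ and their $(;2)$-derivatives) that the paper leaves implicit, but the strategy is identical.
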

 \begin{proof}
    If $F $ is a Riemannian metric, then we have $\mathcal{I}=0,\,\, B^{i}_{jkr}=0.$
Thus, \eqref{transform of berwald curvature of spray derivative} becomes
 \begin{equation}\label{Berwald equivalence}
 \overline{B}^{i}_{jkr}  =\frac{1}{F^{3}}[ \{ -3\varepsilon Q+\varepsilon P_{;2}+P_{;2;2;2}-3Q_{;2;2}\}\ell^{i}+\{3P+\varepsilon Q_{;2}+3\varepsilon P_{;2;2}+Q_{;2;2;2}\}m^{i}]m_{j}m_{k}m_{r}.
 \end{equation}
 Hence, $ \overline{F}$ is Berwaldian  if  $P +\varepsilon P_{;2;2}=0$ and $Q +\varepsilon Q_{;2;2}=0.$ 
 \end{proof}
 
 \begin{theorem}\label{first theorem of F bar berwald 2}
Let $(M,F)$ be  conic pseudo-Riemannian surface related to $(M,\overline{F})$ by  the anisotropic conformal transformation \eqref{the anisotropic conformal transformation}. Then  $\overline{F}$ is Berwaldian if and only if
\begin{equation}\label{equivalence for berwald}
    -3\varepsilon Q-3Q_{;2;2}+\varepsilon P_{;2}+P_{;2;2;2}=0,\;\; \quad  \quad \;\;3P+3\varepsilon P_{;2;2}+\varepsilon Q_{;2}+Q_{;2;2;2}=0.
\end{equation}  
Additionally, whenever $\overline{S}$ and $S$ are projectively equivalent,  $ \overline{F}$ is Berwaldian  if and only if $$\phi_{,1}+\varepsilon\phi_{,1;2;2}=0.$$
\end{theorem}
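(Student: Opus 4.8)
The plan is to read off the equivalence directly from the Riemannian specialization of the master formula \eqref{transform of berwald curvature of spray derivative}. Since $(M,F)$ is conic pseudo-Riemannian, its main scalar and Berwald curvature vanish, $\mathcal{I}=0$ and $B^{i}_{jkr}=0$, so \eqref{transform of berwald curvature of spray derivative} collapses to \eqref{Berwald equivalence}, exactly as in the proof of Proposition \ref{first theorem of F bar berwald}. By \eqref{berwald curvature 2}, $\overline{F}$ is Berwaldian if and only if $\overline{B}^{i}_{jkr}=0$. The right-hand side of \eqref{Berwald equivalence} has the form $\frac{1}{F^{3}}\,[\,(\cdots)\,\ell^{i}+(\cdots)\,m^{i}\,]\,m_{j}m_{k}m_{r}$, where the two bracketed scalars are precisely the left-hand sides of the equations in \eqref{equivalence for berwald}. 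Since $(\ell^{i},m^{i})$ is the linearly independent modified Berwald frame and $m_{j}m_{k}m_{r}$ is a common non-vanishing factor, this tensor vanishes if and only if both scalar coefficients vanish separately; these are exactly the two equations \eqref{equivalence for berwald}. This settles the first equivalence, sufficiency already being Proposition \ref{first theorem of F bar berwald} and necessity coming from the same frame-splitting.

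For the refinement, I would first translate projective equivalence of $\overline{S}$ and $S$ into the single condition $Q=0$. Indeed, by \eqref{transform of coefficient spray} the spray difference is $\overline{G}^{i}-G^{i}=P\,\ell^{i}+Q\,m^{i}$; projective equivalence means this difference is proportional to $y^{i}=F\ell^{i}$, and since $\ell^{i}$ and $m^{i}$ are independent this forces $Q=0$ (with $P$ unconstrained). Substituting $Q=0$ (hence also $Q_{;2}=Q_{;2;2}=0$) into \eqref{equivalence for berwald}, the two conditions become
\begin{equation*}
\varepsilon P_{;2}+P_{;2;2;2}=0,\qquad 3\bigl(P+\varepsilon P_{;2;2}\bigr)=0.
\end{equation*}
Applying $(;2)$ to $P+\varepsilon P_{;2;2}=0$ and multiplying by $\varepsilon$ (using $\varepsilon^{2}=1$) reproduces the first equation, so it is redundant; hence the Berwald conditions reduce to the single equation $P+\varepsilon P_{;2;2}=0$.

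It then remains to rewrite $P+\varepsilon P_{;2;2}$ in terms of $\phi$. From \eqref{relation between P and Q} with $Q=0$ we have $P=\tfrac{1}{2}F^{2}\phi_{,1}$. The key computation is $(F^{2})_{;2}=0$: since $\dot{\partial}_{i}F^{2}=2F\ell_{i}$, one gets $(F^{2})_{;2}=\varepsilon F(2F\ell_{i})m^{i}=2\varepsilon F^{2}(\ell_{i}m^{i})=0$. Because the operator $(\,\cdot\,)_{;2}$ is a derivation, this yields $P_{;2}=\tfrac{1}{2}F^{2}\phi_{,1;2}$ and then $P_{;2;2}=\tfrac{1}{2}F^{2}\phi_{,1;2;2}$, so that
\begin{equation*}
P+\varepsilon P_{;2;2}=\tfrac{1}{2}F^{2}\bigl(\phi_{,1}+\varepsilon\,\phi_{,1;2;2}\bigr).
\end{equation*}
Since $F^{2}\neq 0$, the reduced Berwald condition $P+\varepsilon P_{;2;2}=0$ is equivalent to $\phi_{,1}+\varepsilon\,\phi_{,1;2;2}=0$, which is the claimed criterion. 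The only genuinely computational point is the identity $(F^{2})_{;2}=0$ and its propagation through $P$ via the derivation property; everything else is linear algebra in the frame $(\ell^{i},m^{i})$ together with the reduction of the two Berwald equations to one.
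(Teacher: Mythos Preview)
Your proof is correct and follows essentially the same route as the paper's: read both directions off the Riemannian specialization \eqref{Berwald equivalence} via the frame $(\ell^{i},m^{i})$, then in the projectively-equivalent case use $Q=0$, $P=\tfrac12 F^{2}\phi_{,1}$ and reduce; you supply more detail than the paper (the redundancy of the first reduced equation and the computation $(F^{2})_{;2}=0$). One small slip: Proposition~\ref{first theorem of F bar berwald} only proves sufficiency under the \emph{stronger} hypotheses $P+\varepsilon P_{;2;2}=0$, $Q+\varepsilon Q_{;2;2}=0$, not under \eqref{equivalence for berwald} itself, so your attribution of ``sufficiency'' to it is off---but this is harmless, since your frame-splitting argument already establishes both directions.
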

 \begin{proof}
 Assume that $F $ is Riemannian.  From  \eqref{Berwald equivalence}, it is clear that $ \overline{B}^{i}_{jkr}  =0$ if and only if $P$ and $Q$ satisfy \eqref{equivalence for berwald}.
\par Assume $S $ and $\overline{S}$ are projectively equivalent, that is, $Q=0$ and  $P=\frac{1}{2}F^2\phi_{,1}$ (by \cite[Theorem 4.2]{first paper}).
 Hence,   \eqref{equivalence for berwald}  is simplified to be $P +\varepsilon P_{;2;2}=0$.  Since $P=\frac{1}{2}F^2\phi_{,1}$, then  $\overline{F}$ is Berwaldian if and only if   $ \phi_{,1}+\varepsilon\phi_{,1;2;2}=0$. 
 \end{proof}
 
\begin{proposition}
Under the anisotropic conformal transformation \eqref{the anisotropic conformal transformation}, if the conformal factor is first integral of the geodesic spray $S$ and the two sprays $S$ and $\overline{S}$ are projectively equivalent, then the property of Berwaldian is preserved.
\end{proposition}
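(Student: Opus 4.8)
The plan is to collapse both hypotheses to the single condition $P=Q=0$ and then appeal to the invariance of the Berwald curvature already recorded in part (b) of the Remark following \eqref{transform of berwald curvature of spray derivative}. First I would translate the first-integral assumption: by the characterization $S(f)=Ff_{,1}$ established just before that Remark, the conformal factor $\phi$ is a first integral of the geodesic spray $S$ if and only if $\phi_{,1}=0$ as a function on $TM_0$.

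Next I would invoke the description of projective equivalence. By \cite[Theorem 4.2]{first paper}, exactly as in the proof of Theorem \ref{first theorem of F bar berwald 2}, the sprays $S$ and $\overline{S}$ are projectively equivalent precisely when $Q=0$ and $P=\frac{1}{2}F^{2}\phi_{,1}$. Substituting $\phi_{,1}=0$ then forces $P=\frac{1}{2}F^{2}\cdot 0=0$ alongside $Q=0$, so both transformation functions $P$ and $Q$ vanish identically.

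Finally, with $P=Q=0$ every term on the right-hand side of \eqref{transform of berwald curvature of spray derivative} — each a polynomial in $P$, $Q$ and their $(;2)$-derivatives — vanishes, whence $\overline{B}^{i}_{jkr}=B^{i}_{jkr}$. Consequently $\overline{B}^{i}_{jkr}=0$ if and only if $B^{i}_{jkr}=0$; that is, $\overline{F}$ is Berwaldian if and only if $F$ is, which is exactly the assertion that the Berwaldian property is preserved.

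The proof carries no serious obstacle; the one point worth emphasizing is that the two hypotheses are not independent. The first-integral condition $\phi_{,1}=0$ already renders the projective value $P=\frac{1}{2}F^{2}\phi_{,1}$ of $P$ zero, and feeding $\phi_{,1}=0$ back into \eqref{formula of Q only} together with $Q=0$ also forces $\phi_{,2}=0$. Hence the joint hypotheses are in fact equivalent to $\phi$ being horizontally constant, which is the genuine source of the invariance recorded in the Remark.
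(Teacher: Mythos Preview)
Your argument is correct and follows essentially the same route as the paper: both reduce the two hypotheses to $P=Q=0$ and then invoke the resulting invariance of the Berwald curvature (equivalently, of the geodesic spray). The only cosmetic difference is that the paper extracts $P=0$ from the identity \eqref{relation between P and Q} with $\phi_{,1}=0$ and $Q=0$, whereas you cite the equivalent characterization $P=\tfrac{1}{2}F^{2}\phi_{,1}$ from \cite[Theorem~4.2]{first paper}; your closing observation that the joint hypotheses amount to $\phi$ being horizontally constant is a useful addendum.
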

\begin{proof}
The conformal factor $ \phi$ is a first integral of the geodesic spray $S$ means that $F\phi_{,1}=0$.
Thereby, \eqref{relation between P and Q} becomes
\begin{equation}\label{2nd eq 1st inteqral}
\varepsilon \phi_{;2}Q+P=0.
\end{equation}
Since, $S$ and $\overline{S}$ are projectively equivalent,  i.e.,  $Q=0$,  then by \eqref{2nd eq 1st inteqral}, we get
$P=0.$ Therefore, $\overline{S}=S$ by \eqref{transform of coefficient spray}.
Since the geodesic spray is anisotropically conformally invariant and  $F$ is  Berwaldian then $\overline{F}$ is  Berwaldian.  In this case, we conclude that Berwald spaces are preserved under anisotropic conformal transformation.   
\end{proof}

 Under the anisotropic conformal transformation, we find the required conditions for a conic pseudo-Finsler surface to be anisotropically conformally changed to  a conic pseudo-Berwald surface.

\begin{proposition}If $R\neq0$ and $\phi_{;2}$ is horizontally constant, 
then the property of $F$ being Berwaldian is conserved under the anisotropic conformal transformation \eqref{the anisotropic conformal transformation}.
\end{proposition}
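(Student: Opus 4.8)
The plan is to prove that the main scalar $\overline{\mathcal{I}}$ of $\overline{F}$ is a genuine constant; once this is established, every scalar derivative of $\overline{\mathcal{I}}$ vanishes, so in particular $\overline{\mathcal{I}}_{,a}=\overline{\mathcal{I}}_{,b}=0$, which is the Berwald criterion for $\overline{F}$ (the analogue of criterion (b) following \eqref{douglas curvature}), and the conclusion follows. Since $F$ is assumed Berwaldian, we have $\mathcal{I}_{,1}=\mathcal{I}_{,2}=0$, so $\mathcal{I}$ is horizontally constant, and $F$ is a fortiori a Landsberg surface.

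First I would control the auxiliary quantities $\rho$ and $\rho_{;2}$. Because $F$ is Landsberg and $\phi_{;2}$ is horizontally constant (i.e.\ $\phi_{;2,1}=\phi_{;2,2}=0$), Lemma \ref{proposition phi,phi_;2 and phi_;2;2 d_h closed} applied to $f=\phi_{;2}$ shows that $\phi_{;2;2}$ and $\phi_{;2;2;2}$ are horizontally constant, and applied to $f=\mathcal{I}$ it shows that $\mathcal{I}_{;2}$ and $\mathcal{I}_{;2;2}$ are horizontally constant as well. Substituting these vanishing horizontal derivatives, together with $\mathcal{I}_{,1}=\mathcal{I}_{,2}=\phi_{;2,1}=\phi_{;2,2}=0$, into the expansions of Remark \ref{remark rho_,1 and rho_,2}(c) makes every term collapse, giving $\rho_{,1}=\rho_{,2}=\rho_{;2,1}=\rho_{;2,2}=0$. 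This is precisely the Berwald analogue of Corollary \ref{proposition phi_;2 d_h closed and F riemannian}, the only difference being that $\mathcal{I}$ is no longer zero but merely horizontally constant, which is all the argument requires.

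Next I would feed $\rho_{,1}=\rho_{;2,1}=\mathcal{I}_{,1}=\phi_{;2,1}=0$ into \eqref{I,1 bar formula} and the analogous quantities into \eqref{I,2 bar formula}, obtaining $\overline{\mathcal{I}}_{,1}=\overline{\mathcal{I}}_{,2}=0$; that is, the $h(0)$ function $\overline{\mathcal{I}}$ is horizontally constant with respect to $F$. This is where the hypothesis $R\neq 0$ enters decisively: by Lemma \ref{homothety of phi}(c) a horizontally constant $h(0)$ function on a surface with nonvanishing Gauss curvature must be constant, so $\overline{\mathcal{I}}$ is constant, and hence $\overline{\mathcal{I}}_{,a}=\overline{\mathcal{I}}_{,b}=0$, making $\overline{F}$ Berwaldian.

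I expect the main obstacle to be twofold. On the computational side it is the verification that $\rho_{;2,1}$ and $\rho_{;2,2}$ vanish, since their expansions in Remark \ref{remark rho_,1 and rho_,2}(c) involve many third- and fourth-order mixed derivatives such as $\phi_{;2;2;2,1}$ and $\mathcal{I}_{;2,1}$, each of which must be killed separately by iterating Lemma \ref{proposition phi,phi_;2 and phi_;2;2 d_h closed}. On the conceptual side, the essential step is the passage from \emph{horizontally constant} to \emph{constant}: horizontal constancy of $\overline{\mathcal{I}}$ alone does not force $\overline{F}$ to be Berwaldian, because \eqref{I,a bar formula} and \eqref{I,b bar formula} still contain the factor $\overline{\mathcal{I}}_{;2}$ multiplied by $Q$ and by $\varepsilon P+Q_{;2}-\varepsilon\mathcal{I}Q-2\phi_{;2}Q$, which need not vanish. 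It is precisely the curvature condition $R\neq 0$, through Lemma \ref{homothety of phi}(c), that forces $\overline{\mathcal{I}}_{;2}=0$ and removes these obstructions.
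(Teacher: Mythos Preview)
Your proposal is correct and follows essentially the same route as the paper: establish that $\rho$ and $\rho_{;2}$ are horizontally constant via Lemma~\ref{proposition phi,phi_;2 and phi_;2;2 d_h closed} and Remark~\ref{remark rho_,1 and rho_,2}(c), deduce $\overline{\mathcal{I}}_{,1}=\overline{\mathcal{I}}_{,2}=0$ from \eqref{I,1 bar formula}--\eqref{I,2 bar formula}, and then use $R\neq0$ to force $\overline{\mathcal{I}}_{;2}=0$ so that \eqref{I,a bar formula}--\eqref{I,b bar formula} give $\overline{\mathcal{I}}_{,a}=\overline{\mathcal{I}}_{,b}=0$. The paper invokes the commutation formula directly at the last step rather than Lemma~\ref{homothety of phi}(c), but this is the same argument; if anything, you are slightly more explicit in applying Lemma~\ref{proposition phi,phi_;2 and phi_;2;2 d_h closed} to $\mathcal{I}$ as well as to $\phi_{;2}$, which is indeed needed to kill the $\mathcal{I}_{;2,1}$ and $\mathcal{I}_{;2,2}$ terms appearing in $\rho_{;2,1}$ and $\rho_{;2,2}$.
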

\begin{proof}
Let $F$ be  a Berwald metric, that is, $\mathcal{I}_{,1}=0=\mathcal{I}_{,2}$. Applying \eqref{second comutation} for $f=\mathcal{I}$, we get
\begin{align}\label{1st relation of prop of berwald form phi_;2 d_h closed}
\mathcal{I}_{;2,1}=0.
\end{align}
Assume that $\phi_{;2}$ is horizontally constant, i.e., 
$\phi_{;2,1}=\phi_{;2,2}=0$. 
Consequently, by  Lemma \ref{proposition phi,phi_;2 and phi_;2;2 d_h closed}, we have $\phi_{;2;2}$ and $\phi_{;2;2;2}$ are horizontally constant, that is, 
$\phi_{;2;2,1}=\phi_{;2;2,2}=\phi_{;2;2;2,1}=\phi_{;2;2;2,2}=0.$
Therefore, from Remark \ref{remark rho_,1 and rho_,2} (c), we get
\begin{align}\label{3rd relation of prop of berwald form phi_;2 d_h closed}
\rho_{,1}=\rho_{,2}=\rho_{;2,1}=\rho_{;2,2}=0.
\end{align}
Thus, by  \eqref{I,1 bar formula}  and \eqref{I,2 bar formula}, we have $
 \mathcal{\overline{I}}_{,1}=\mathcal{\overline{I}}_{,2}=0. $
Applying \eqref{third comutation} for $f=\overline{\mathcal{I}}$, (since $R\neq0$), we get
$\overline{\mathcal{I}}_{;2}=0.$ Hence,  $
 \mathcal{\overline{I}}_{,a}=\mathcal{\overline{I}}_{,b}=0$, by \eqref{I,a bar formula} and \eqref{I,b bar formula}, which means $\overline{F}$ is a Berwald metric.     
\end{proof}
 According to  \cite[Eqn.(20)]{Fuster} using a pseudo-Riemannian Lagrangian $A(x,y)=a_{ij}(x)y^iy^j,$  they found a necessary and  sufficient condition  for $A$ to be anisotropically conformally transformed to a conic pseudo-Berwald Lagrangian $L$, via, $L=\Omega \, A,$ where $\Omega$ is  the anisotropic conformal factor which is a smooth $h(0)$ function on $\mathcal{A}$. This condition is equivalent to $A\,\delta_i\Omega = y^k\, M^j_{ik} \,\dot{\partial}_jL$ for any smooth symmetric tensor $M^j_{ik}(x).$ 
  In our context, under the anisotropic conformal transformation \eqref{the anisotropic conformal transformation} provided that $F$ is a conic pseudo-Riemannian metric, the necessary  and sufficient conditions for $\overline{F}$ to be a conic pseudo-Berwald metric  is the existence of a symmetric tensor $M^i_{jk}$ of type $(1,2)$ on the base manifold $M$  such that
\begin{equation}\label{fuster equivelent}
    F^2 \,\delta_{i}(e^{2\phi})=y^k\,M^j_{ik} \,\dot{\partial}_{j}\overline{F}^2
\end{equation} 
 \begin{theorem}\label{theorem of existence symmetric tensor on M}
   Let $F$ be a conic pseudo-Riemannian  metric  anisotropically  conformally changed to $\overline{F}$ by the anisotropic conformal transformation \eqref{the anisotropic conformal transformation}. The following assertions are equivalent:
   \begin{description}
       \item[(a)]The functions $P$ and $Q$, given by \eqref{formula of P only} and \eqref{formula of Q only}, are satisfy 
\begin{equation}   \label{PQ}
       -3\varepsilon Q-3Q_{;2;2}+\varepsilon P_{;2}+P_{;2;2;2}=0, \quad \quad 3P+3\varepsilon P_{;2;2}+\varepsilon Q_{;2}+Q_{;2;2;2}=0.
       \end{equation}    
       \item[(b)] There exists a symmetric tensor on the base manifold $M$ given by
        \begin{align}\label{symmetric tensor on the base maifold first}
     M^i_{jk}=& \frac{1}{F^{2}}[2(P\ell^{i}+Qm^{i})\ell_{j}\ell_{k}+\{(P_{;2}-Q)\ell^{i}+(\varepsilon P+ Q_{;2})m^{i}\} (\ell_{j}m_{k}+\ell_{k}m_{j})\nonumber\\
     &+\{(\varepsilon P+P_{;2;2}-2Q_{;2})\ell^{i}+(2\varepsilon P_{;2}+\varepsilon Q+ Q_{;2;2})m^{i}\}m_{j}m_{k}].
  \end{align}
    \item[(c)]$\overline{F}$ is Berwaldian.  
   \end{description}
\end{theorem}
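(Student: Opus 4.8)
The plan is to treat this three-way equivalence as two linked biconditionals, the whole thing hinging on the observation that the expression \eqref{symmetric tensor on the base maifold first} is nothing more than the difference of two Berwald connections. The equivalence of (a) and (c) is in fact already available: Theorem \ref{first theorem of F bar berwald 2} asserts that, for a conic pseudo-Riemannian $F$, the metric $\overline{F}$ is Berwaldian exactly when the pair $(P,Q)$ satisfies \eqref{equivalence for berwald}, and \eqref{equivalence for berwald} is verbatim \eqref{PQ}. Hence (a) $\Leftrightarrow$ (c) costs nothing, and it remains only to fold in (b).

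Next I would identify $M^i_{jk}$ geometrically. Since $F$ is Riemannian we may set $\mathcal{I}=0$ in the Berwald-connection transform \eqref{transform of Berwald connection}; doing so collapses it precisely into the relation $\overline{G}^{i}_{jk}=G^{i}_{jk}+M^{i}_{jk}$, where $M^{i}_{jk}$ is the right-hand side of \eqref{symmetric tensor on the base maifold first}. Thus $M^{i}_{jk}$ is the difference of the two Berwald connections. It is therefore automatically symmetric in $j,k$ (both $G^{i}_{jk}$ and $\overline{G}^{i}_{jk}$ are) and transforms tensorially under base coordinate changes, the inhomogeneous parts cancelling. The sole remaining content of (b) is then that this difference descends to $M$, that is, that $M^{i}_{jk}$ is independent of the fibre coordinate $y$.

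The bridge to (c) is the vertical derivative. Differentiating $M^{i}_{jk}=\overline{G}^{i}_{jk}-G^{i}_{jk}$ with respect to $y^{r}$, and recalling that $\dot{\partial}_{r}G^{i}_{jk}$ is the Berwald curvature, gives $\dot{\partial}_{r}M^{i}_{jk}=\overline{B}^{i}_{jkr}-B^{i}_{jkr}$; since $F$ is Riemannian, $B^{i}_{jkr}=0$, so $\dot{\partial}_{r}M^{i}_{jk}=\overline{B}^{i}_{jkr}$. By \eqref{Berwald equivalence} this right-hand side vanishes exactly when $(P,Q)$ obey \eqref{PQ}. Consequently $M^{i}_{jk}$ is $y$-independent, i.e. a genuine symmetric $(1,2)$-tensor pulled back from $M$, if and only if \eqref{PQ} holds, which by the first step is equivalent to $\overline{F}$ being Berwaldian. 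This establishes (b) $\Leftrightarrow$ (c) and completes the cycle.

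The step I expect to demand the most care is the clean specialization of \eqref{transform of Berwald connection} to $\mathcal{I}=0$ and the term-by-term matching against \eqref{symmetric tensor on the base maifold first}: one must verify that the $\mathcal{I}$-dependent contributions in the $m_{j}m_{k}$ and $\ell_{j}m_{k}$ blocks (namely $-\varepsilon\mathcal{I}Q$, $+\varepsilon\mathcal{I}P_{;2}$, $-\varepsilon\mathcal{I}_{;2}Q$ and $-\varepsilon\mathcal{I}Q_{;2}$) are exactly those absent from \eqref{symmetric tensor on the base maifold first}, so that the identification $M^{i}_{jk}=\overline{G}^{i}_{jk}-G^{i}_{jk}$ is exact and not merely formal. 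As a conceptual cross-check, (b) $\Leftrightarrow$ (c) can also be read through the Fuster criterion \eqref{fuster equivelent}: contracting the connection identity with $y^{j}y^{k}$ and using Lemma \ref{properties.2.dim.Fins} recovers the statement that the spray $\overline{G}^{i}$ is position-quadratic, which is precisely the existence of a base tensor $M^{j}_{ik}$ realizing $F^{2}\delta_{i}(e^{2\phi})=y^{k}M^{j}_{ik}\dot{\partial}_{j}\overline{F}^{2}$. Either route confirms that the explicit $M^{i}_{jk}$ of \eqref{symmetric tensor on the base maifold first} is the required symmetric tensor on $M$ exactly under the conditions \eqref{PQ}.
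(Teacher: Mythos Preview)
Your argument is correct and shares its skeleton with the paper's: both identify $M^{i}_{jk}$ as the connection difference $\overline{G}^{i}_{jk}-G^{i}_{jk}$ (specializing \eqref{transform of Berwald connection} to $\mathcal{I}=0$), and both invoke Theorem~\ref{first theorem of F bar berwald 2} for (a)~$\Leftrightarrow$~(c). The difference lies in how (b) is linked to (c). You argue directly that $\dot{\partial}_{r}M^{i}_{jk}=\overline{B}^{i}_{jkr}$, so $y$-independence of $M$ is literally the vanishing of the Berwald curvature of $\overline{F}$; this is clean and self-contained. The paper instead organizes the cycle as (a)~$\Rightarrow$~(b) (by the same vertical-derivative computation you describe, carried out termwise), then (b)~$\Rightarrow$~(c) via the Fuster criterion \eqref{fuster equivelent}: it plugs \eqref{symmetric tensor on the base maifold first} into \eqref{fuster equivelent} and reduces the resulting equations to the identities \eqref{relation between P and Q} and \eqref{identity 1}. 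Your route avoids that detour and is more elementary; the paper's route has the side benefit of explicitly exhibiting $M^{i}_{jk}$ as the tensor realizing \eqref{fuster equivelent}, tying the result back to the characterization in \cite{Fuster}.
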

\begin{proof}
$\textbf{(a)}\Longrightarrow \textbf{(b)}$ Let $(M,F)$ be a conic pseudo-Riemannian surface such that the functions $P$ and $Q$,  determined by \eqref{formula of P only} and \eqref{formula of Q only},  satisfy  \eqref{PQ}. Then,  by Theorem \ref{first theorem of F bar berwald 2},   $(M,F)$ is Berwaldian. If $G^i_{jk}$ and $\overline{G}^i_{jk}$ are the Berwald connection coefficients of $F$ and $\overline{F}$, respectively, then the difference between them is a symmetric tensor of type $(1,2)$ say $M^i_{jk}$.
From \eqref{transform of Berwald connection} and $F$ being Riemannian metric, the formula of the tensor $M^i_{jk}$ is given by \eqref{symmetric tensor on the base maifold first}. 
 Clearly, $ M^i_{jk}$  is  symmetric  in $j,\;k$.
\par   Now, showing that $\dot{\partial}_r M^i_{jk}=0$  is sufficient to complete the proof of $\textbf{(a)}\Longrightarrow \textbf{(b)}$.  Since  $F$ is a conic pseudo-Riemannian metric, Lemma \ref{properties.2.dim.Fins} gives raise to 
     $$F\dot{\partial _{j}}\ell_{i}=\varepsilon m_{i}m_{j},\quad F\dot{\partial _{j}}\ell^{i}=\varepsilon m^{i}m_{j}, \quad F \dot{\partial_{j}}m_{i}=-\ell_{i}m_j,\qquad F\dot{\partial _{j}}m^{i}=-\ell^{i}m_j.$$ After some slightly tedious  straightforward calculations, we conclude that all terms of $\dot{\partial}_r M^i_{jk}$  vanish except the following two terms
 \begin{align*}
        \dot{\partial}_r M^i_{jk}=\frac{1}{F^3}[(-3\varepsilon Q-3Q_{;2;2}+\varepsilon P_{;2}+P_{;2;2;2})\ell^i + (3P+3\varepsilon P_{;2;2}+\varepsilon Q_{;2}+Q_{;2;2;2})m^i]\,m_jm_km_r.
 \end{align*}
The symmetric tensor $M^i_{jk}$ does not depend on the directional argument since $$-3\varepsilon Q-3Q_{;2;2}+\varepsilon P_{;2}+P_{;2;2;2}=0, \qquad \qquad 3P+3\varepsilon P_{;2;2}+\varepsilon Q_{;2}+Q_{;2;2;2}=0.$$ 
$\textbf{(b)}\Longleftrightarrow \textbf{(c)}$ It suffices to prove that \eqref{symmetric tensor on the base maifold first} satisfies \eqref{fuster equivelent}. Thereby,
plugging \eqref{symmetric tensor on the base maifold first} into \eqref{fuster equivelent}, we get
\begin{align*}
   2e^{2\phi} F^2(\phi_{,1}\ell_{i}+\phi_{,2}m_i)=&\frac{1}{F^{2}}[(2P\ell^{j}+2Qm^{j})\ell_{i}\ell_{k}+\{(P_{;2}-Q)\ell^{j}+(\varepsilon P+ Q_{;2})m^{j}\} (\ell_{i}m_{k}+\ell_{k}m_{i})\\
     &+\{(\varepsilon P+P_{;2;2}-2Q_{;2})\ell^{j}+(2\varepsilon P_{;2}+\varepsilon Q+ Q_{;2;2})m^{j}\}m_{i}m_{k}][2e^{2\phi}F^2\ell_j\ell^k\\
     &+2e^{2\phi}F^2\phi_{;2}\ell^km_j].
\end{align*}
\par Using $\ell_im^i=0,$ we obtain $
   F^2(\phi_{,1}\ell_{i}+\phi_{,2}m_i) =\left\{2P+2\varepsilon \phi_{;2}Q\right\}\ell_i+\left\{(P_{;2}-Q)+\phi_{;2}(P+\varepsilon Q_{;2})\right\} m_i. $
Consequently, by contracting with $\ell^i$ and $m^i$, we deduce 
$$
 F^2\phi_{,1}=2P+2\varepsilon \phi_{;2}Q, \qquad \qquad
 F^2\phi_{,2}=(P_{;2}-Q)+\phi_{;2}(P+\varepsilon Q_{;2}).
$$
 From \eqref{relation between P and Q}  and  \eqref{identity 1}, where $\mathcal{I}=0$, we infer that the previous two formulae are satisfied because they are identities. Hence,  \eqref{fuster equivelent} is satisfied which means $\overline{F}$ is Berwaldian.\\
$\textbf{(c)}\Longleftrightarrow \textbf{(a)}$ It follows directly from Theorem \ref{first theorem of F bar berwald 2}
\end{proof}
\begin{remark}
  Let $(M,F)$ be a conic pseudo-Riemannian surface. The Berwaldian property of $(M,\overline{F})$ under the anisotropic conformal transformation \eqref{the anisotropic conformal transformation} depends on the existence of a symmetric tensor $M^j_{ik}$ satisfies \eqref{fuster equivelent}. In Theorem \ref{theorem of existence symmetric tensor on M}, we have found an explicit expression of $M^j_{ik}$.  Now, assuming that the functions $P$ and $Q$  satisfy $$P +\varepsilon P_{;2;2}=0 \quad\text{ and
}\quad Q +\varepsilon Q_{;2;2}=0.$$ Thus, the tensor $M^i_{jk}$ given by \eqref{symmetric tensor on the base maifold first}  takes the form 
\begin{align*}\label{symmetric tensor on the base maifold second}
     M^i_{jk}=& \frac{1}{F^{2}}[2(P\ell^{i}+Qm^{i})\ell_{j}\ell_{k}+\{(P_{;2}-Q)\ell^{i}+(\varepsilon P+ Q_{;2})m^{i}\}(\ell_{j}m_{k}+\ell_{k}m_{j})\nonumber\\
 &+2\{-Q_{;2}\ell^{i}+\varepsilon P_{;2}m^{i}\}m_{j}m_{k}].
\end{align*}  
\end{remark}
In case of regular ($\mathcal{A}=TM_0$) positive definite Finsler metrics, any affine Berwald connection corresponds to  Levi-Civita connection of some Riemannian metric on the same manifold. This result is known as \lq \lq Szabó's metrization theorem" \cite{szabo}. Consequently, a Berwald space is Riemann metrizable if its  Berwald connection coincides with the Levi–Civita connection of some Riemannian metric on the same manifold.
\par However, Szabó's metrization theorem \textbf{does not} extend to conic pseudo-Finsler surfaces. Using an anisotropic conformal transformation of a  conic pseudo-Finsler surface, we provide a counterexample to Szabó's  theorem (see, Example \ref{not Riem met.}).  Furthermore, we derive a necessary and sufficient condition for $\overline{F}$ to be Berwald metrizable by $F$, as stated in the following proposition. 

\begin{proposition}\label{proposition of metrizable equivalent}
   Let $(M,F)$ be a conic pseudo-Riemannian surface anisotropically  conformally changed to conic pseudo-Berwald surface $(M,\overline{F})$. Then $\overline{F}$ is Riemann metrizable if and only if $$\phi_{,1}=\phi_{,2}=0.$$ 
\end{proposition}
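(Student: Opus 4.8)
The plan is to translate the geometric statement ``$\overline{F}$ is Riemann metrizable by $F$'' into an algebraic condition on the scalars $P$ and $Q$, and then to identify that condition with $\phi_{,1}=\phi_{,2}=0$. Since $F$ is conic pseudo-Riemannian, its Berwald connection $G^{i}_{jk}$ is precisely the Levi-Civita connection of $F$ and is a function of $x$ alone; likewise, because $\overline{F}$ is conic pseudo-Berwald, $\overline{G}^{i}_{jk}$ depends on $x$ only. Thus metrizability of $\overline{F}$ by $F$ means exactly $\overline{G}^{i}_{jk}=G^{i}_{jk}$. As both sprays are $h(2)$, homogeneity gives $G^{i}=\tfrac12 y^{j}y^{k}G^{i}_{jk}$ and $\overline{G}^{i}=\tfrac12 y^{j}y^{k}\overline{G}^{i}_{jk}$, so this connection identity is equivalent to the spray identity $\overline{G}^{i}=G^{i}$. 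By \eqref{transform of coefficient spray}, $\overline{G}^{i}=G^{i}+P\ell^{i}+Qm^{i}$, and the linear independence of $\ell^{i}$ and $m^{i}$ reduces the whole notion of metrizability by $F$ to the single condition $P=Q=0$. (The same condition drops out of the leading $\ell_{j}\ell_{k}$-coefficient $2P\ell^{i}+2Qm^{i}$ in the correction term of \eqref{transform of Berwald connection}.)

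It remains to prove the equivalence $P=Q=0 \Longleftrightarrow \phi_{,1}=\phi_{,2}=0$, already flagged in the remark after Theorem~\ref{first theorem of F bar berwald 2}. For the forward direction I would assume $P=Q=0$ and read \eqref{relation between P and Q} as $F^{2}\phi_{,1}=2\varepsilon\phi_{;2}Q+2P=0$; since $F\neq0$ this gives $\phi_{,1}=0$, whence $\phi_{,1;2}=0$, and then $Q=0$ in \eqref{formula of Q only} collapses to $\varepsilon\rho F^{2}(-2\phi_{,2})=0$, forcing $\phi_{,2}=0$. For the reverse direction I would assume $\phi_{,1}=\phi_{,2}=0$; then $\phi_{,1}$ vanishes identically, so $\phi_{,1;2}=0$ as well, and substituting into \eqref{formula of Q only} and \eqref{formula of P only} yields $Q=0$ and $P=0$ directly.

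Combining the two reductions completes the proof. If $\phi_{,1}=\phi_{,2}=0$ then $P=Q=0$, so by \eqref{transform of coefficient spray} and \eqref{transform of Berwald connection} every correction term vanishes and $\overline{G}^{i}_{jk}=G^{i}_{jk}$ is the Levi-Civita connection of $F$, i.e. $\overline{F}$ is Riemann metrizable by $F$; conversely, metrizability by $F$ yields $\overline{G}^{i}_{jk}=G^{i}_{jk}$, hence $P=Q=0$, hence $\phi_{,1}=\phi_{,2}=0$.

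The step I expect to be the main obstacle is the first one: making precise that ``Riemann metrizable by $F$'' is genuinely the connection identity $\overline{G}^{i}_{jk}=G^{i}_{jk}$, and that, via homogeneity, this is no weaker than the spray identity $\overline{G}^{i}=G^{i}$. One should also guard against the a priori possibility that $\overline{F}$ is metrized by some pseudo-Riemannian metric other than $F$; here the statement is relative to the fixed background $F$ (as in the abstract), so this reading is the intended one, and the companion counterexample to Szab\'o's theorem shows that without the condition $\phi_{,1}=\phi_{,2}=0$ the Berwald metric $\overline{F}$ need not be metrizable at all. Once the reduction to $P=Q=0$ is in place, the manipulations with \eqref{relation between P and Q}, \eqref{formula of Q only} and \eqref{formula of P only} are entirely elementary.
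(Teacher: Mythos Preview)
Your argument is correct and follows essentially the same route as the paper. Both reduce ``Riemann metrizable by $F$'' to the connection identity $\overline{G}^{i}_{jk}=G^{i}_{jk}$, then to $P=Q=0$, and finally to $\phi_{,1}=\phi_{,2}=0$. The only cosmetic differences are: (i) you pass from the connection identity to $P=Q=0$ via homogeneity and the spray identity $\overline{G}^{i}=G^{i}$, whereas the paper contracts the explicit expression \eqref{symmetric tensor on the base maifold first} for $M^{i}_{jk}$ with $\ell_{i}\ell^{j}\ell^{k}$ and $m_{i}\ell^{j}\ell^{k}$; and (ii) you verify the equivalence $P=Q=0\Longleftrightarrow\phi_{,1}=\phi_{,2}=0$ directly from \eqref{relation between P and Q}, \eqref{formula of Q only}, \eqref{formula of P only}, while the paper simply cites \cite[Theorem~4.11]{first paper}. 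Your version is slightly more self-contained; otherwise the two proofs are the same.
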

\begin{proof}
Let $F$ be a conic pseudo-Riemannian metric  anisotropically  conformally changed to 
 conic a pseudo-Berwald metric $\overline{F}$. Since the difference between two the connection coefficients $\overline{G}^i_{jk}\text{ and } G^i_{jk}$ is a tensor, that is,  $\overline{G}^i_{jk}-G^i_{jk}=M^i_{jk}.$ 
In view of Theorem \ref{theorem of existence symmetric tensor on M}, the symmetric tensor $M^i_{jk}(x)$ of type $(1,2)$ is given by \eqref{symmetric tensor on the base maifold first}. The conic pseudo-Berwald $\overline{F}$ is Riemann metrizable by $F$ if and only if $M^i_{jk}=0$, i.e.,
    \begin{align}
        0=&(2P\ell^{i}+2Qm^{i})\ell_{j}\ell_{k}+\{(P_{;2}-Q)\ell^{i}+(\varepsilon P+ Q_{;2})m^{i}\} (\ell_{j}m_{k}+\ell_{k}m_{j}) \\
          &+\{(\varepsilon P+P_{;2;2}-2Q_{;2})\ell^{i}+(2\varepsilon P_{;2}+\varepsilon Q+ Q_{;2;2})m^{i}\}m_{j}m_{k},\label{T tensor vanishs}
    \end{align}
    which is equivalent to $P=Q=0.$
    This is because $P=Q=0$ obviously implies  $M^i_{jk}=0$. On the other hand, contracting \eqref{T tensor vanishs} by $\ell_i \ell^j\ell^k$ and $m^i\ell^j\ell^k$, we get $P=Q=0.$ 
    \par From \cite[Theorem 4.11]{first paper}, $P=Q=0$ is also equivalent to $\phi_{,1}=\phi_{,2}=0.$
\end{proof}
We provide the following example of a conic pseudo-Riemannian metric that  anisotropically conformally changed to a conic pseudo-Berwald and they have the same connection. In other words, the conic pseudo-Berwald is Riemann metrizable.  For the calculations of the following examples, a Maple's code is posted on\\
 \url{
https://github.com/salahelgendi/Maple-s-code-for-calculations-of-Examples-3.11-3.12}
\begin{example}\label{example 2}
Let $M=\mathbb{B}^2\subset\mathbb{R}^2$, $x \in M,\; y\in T_{x}\mathbb{B}^{2}\cong \mathbb{R}^{2},\; a=(a_{1},a_{2})\in\mathbb{R}^{2} $ and $a$ is a constant vector with $|a|<1$. Let   
$$\displaystyle{z^i=\frac{(1+\langle a,x\rangle)y^i-\langle a,y \rangle x^i}{\langle a,y \rangle}}.$$
Define the Finsler metric $F$ by
$$F=\frac{\langle a,y\rangle \sqrt{(z^1)^2+(z^2)^2}}{(1+\langle a,x\rangle)^2}. $$
The  spray coefficients are given by
$$
G^i=-\frac{\langle a,y \rangle}{1+\langle a,x \rangle}y^i.
$$
Now, let $\overline{ F}= e^{\phi} F=\dfrac{\langle a,y\rangle \sqrt{(z^1)^2+(z^2)^2}}{(1+\langle a,x\rangle)^2} \ \exp\left( \sqrt{(z^1)^2+(z^2)^2}\right), $
where $\phi=\sqrt{(z^1)^2+(z^2)^2}.$ \\
One can easily check that $\overline{ F}$  is a conic pseudo-Finsler metric and $\delta_{i}\phi=0$. Furthermore,
$$\overline{G}^i=G^i=-\frac{\langle a,y \rangle}{1+\langle a,x \rangle}\,y^i.$$
\end{example}

By making use of Berwald-Rund metric \cite{Bao}, we  find a counterexample to  Szabó's theorem, which show that not all conic pseudo-Berwald spaces are Riemann metrizable.
\begin{example}\label{not Riem met.}
Let $M=\mathbb{R}^2$ and the coordinates of $(x,y)$ in $\mathcal{A}$ can be written as $(x^1,x^2; y^1,y^2)$. Define the conic pseudo-Finsler metric $F$ on $M$ by 
$$F=\frac{1}{x_2}\sqrt{2y_1 y_2 x_2^2+c y_2^2},$$ where $c:=1-2x_1x_2+\sqrt{1-4x_1x_2}.$
The main scalar of $F$  vanishes ($\mathcal{I}=0$), i.e., it is a conic pseudo-Riemannian metric.
The anisotropic conformal factor is expressed as $$\phi=\frac{3}{2}\ln{(\frac{2x_2^2y_1+cy_2}{x_2^2y_2})}.$$
We get the conic pseudo-Finsler metric 
$$\overline{F}=e^{\phi}F=\sqrt{2}y_2\left(\frac{c}{2x_2^2}+\frac{y_1}{y_2}\right)^2,$$ where $\sigma-\phi_{;2}^2+\varepsilon\neq0$ (necessary and sufficient for $\overline{F}$ to be pseudo-Finsler metric \cite{first paper}). One can easily check that $\overline{F}$ satisfies Theorem $\emph{\ref{first theorem of F bar berwald}}$, that is, 
$$-3\varepsilon Q-3Q_{;2;2}+\varepsilon P_{;2}+P_{;2;2;2}=0,\;\quad  \;\;3P+3\varepsilon P_{;2;2}+\varepsilon Q_{;2}+Q_{;2;2;2}=0.$$ Consequently,  $\overline{F}$ is a conic pseudo-Berwald metric. From Proposition \ref{proposition of metrizable equivalent}, the conic pseudo-Berwald metric $\overline{F}$ is not Riemann metrizable by $F$ because $\phi_{,1}$ and $\phi_{,2}$ do not vanish.   
\end{example}

\section{ Landsberg Surface and T-Condition}
Given a Finsler manifold, the $T$-condition refers to the vanishing of the $T$-tensor. In particular, for a Finsler surface,  the $T$-condition  and $\sigma$-condition (when Finsler space $(M, F)$  admits a non-constant function $\sigma(x)$ such that $(\partial_{i}\sigma)T^i_{jkr}=0$) are equivalent \cite{elgendi T}.

Suppose that  $(M,F)$ is a conic pseudo-Finsler surface anisotropically conformally changed to $(M,\overline{F})$. From \eqref{transform of m^i} and \eqref{I;b bar formula}, the $\overline{T}$-tensor ($\overline{F}\;\overline{T}_{ijkh}:=\mathcal{\overline{I}}_{;\,b}\,\overline{m}_i\overline{m}_j\overline{m}_k\overline{m}$) of $(M,\overline{F})$ can be expressed as follows
\begin{align}\label{T tensor of F bar}
\overline{F}\;\overline{T}_{ijkh}
=&e^{4\phi}\,(\frac{\varepsilon}{\rho})^{\frac{3}{2}}\,\overline{\mathcal{I}}_{;2}\,m_im_jm_hm_k.
\end{align}
\begin{remark}
\begin{description}
\item[(a)] From \eqref{T tensor of F bar},  the Finsler metric $\overline{F}$ satisfies the $\overline{T}$-condition if and only if $\mathcal{\overline{I}}_{;2}=0.$ 
\item[(b)]From \eqref{I; 2 bar formula} and \eqref{T tensor of F}, we get
\begin{align}\label{T tensor 2 of F bar}
\overline{T}_{ijhk}=&\frac{\varepsilon e^{3\phi}}{ \rho}\left[ T_{ijkh}+\frac{1}{2F\rho}\left(4\varepsilon\rho\phi_{;2;2}+\rho_{;2}(\mathcal{I}+2\varepsilon\phi_{;2}+\frac{\varepsilon\rho_{;2}}{2\rho})-\varepsilon\rho_{;2;2}\right) m_im_jm_hm_k\right].
\end{align}
\end{description}
\end{remark}

\begin{proposition}
Under the anisotropic conformal transformation \eqref{the anisotropic conformal transformation}, we have 
\begin{description}
\item[(a)]If $\phi_{;2}$ is an isotropic function then the $T$-condition is preserved.
\item[(b)]Suppose that $\mathfrak{\overline{g}}=e^{4\phi}\mathfrak{g}$ then the $T$-condition is preserved if and only if $\phi_{;2;2}=0.$ 
\end{description}  
\end{proposition}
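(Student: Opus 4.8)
The plan is to reduce both parts to the master formula \eqref{I; 2 bar formula} for $\overline{\mathcal{I}}_{;2}$. By item \textbf{(c)} following the definition of the $T$-tensor the $T$-condition for $F$ is exactly $\mathcal{I}_{;2}=0$, and by part \textbf{(a)} of the Remark preceding this proposition the $\overline{T}$-condition for $\overline{F}$ is $\overline{\mathcal{I}}_{;2}=0$. Hence ``preservation of the $T$-condition'' will be established by controlling, in \eqref{I; 2 bar formula} (equivalently, in the bracketed term of \eqref{T tensor 2 of F bar}), the three quantities $\rho_{;2}$, $\rho_{;2;2}$ and $\phi_{;2;2}$. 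I read ``preserved'' in part \textbf{(a)} in the forward sense used throughout the paper ($F$ satisfies the $T$-condition $\Rightarrow$ $\overline{F}$ does), while in part \textbf{(b)} the ``if and only if'' is obtained from an exact transformation law relating $\overline{\mathcal{I}}_{;2}$ and $\mathcal{I}_{;2}$.

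For part \textbf{(a)} I would first record that if $\phi_{;2}$ is isotropic, i.e.\ a function of the position $x$ alone, then $\dot{\partial}_i\phi_{;2}=0$, so that $\phi_{;2;2}=\varepsilon F(\dot{\partial}_i\phi_{;2})m^i=0$ identically, and consequently $\phi_{;2;2;2}=0$ as the $(;2)$-derivative of the identically vanishing $\phi_{;2;2}$. Next, assuming $F$ satisfies the $T$-condition $\mathcal{I}_{;2}=0$, I differentiate to obtain $\mathcal{I}_{;2;2}=0$, and I compute $\rho_{;2}$ by differentiating \eqref{formula of rho} (the $(;2)$-analogue of Remark \ref{remark rho_,1 and rho_,2}\textbf{(c)}), namely $\rho_{;2}=-\rho^2(\phi_{;2;2;2}+\varepsilon\mathcal{I}_{;2}\phi_{;2}+\varepsilon\mathcal{I}\phi_{;2;2}+2\phi_{;2}\phi_{;2;2})$. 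Under the vanishing just established this collapses to $\rho_{;2}=0$ identically, whence also $\rho_{;2;2}=(\rho_{;2})_{;2}=0$. Substituting $\mathcal{I}_{;2}=0$, $\phi_{;2;2}=0$, $\rho_{;2}=0$ and $\rho_{;2;2}=0$ into \eqref{I; 2 bar formula} annihilates every term of the bracket, giving $\overline{\mathcal{I}}_{;2}=0$; thus $\overline{F}$ satisfies the $T$-condition.

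For part \textbf{(b)} I would invoke Remark \ref{remark rho_,1 and rho_,2}\textbf{(d)} to replace the hypothesis $\overline{\mathfrak{g}}=e^{4\phi}\mathfrak{g}$ by its equivalent $\sigma=(\phi_{;2})^2$. Feeding this into \eqref{formula of rho} gives $\rho=1/(\sigma+\varepsilon-(\phi_{;2})^2)=1/\varepsilon=\varepsilon$, so $\rho$ is constant and $\rho_{;2}=\rho_{;2;2}=(\ln\rho)_{;2}=0$. Then \eqref{I; 2 bar formula} simplifies, using $\sqrt{\varepsilon\rho}=1$ and $1/\varepsilon=\varepsilon$, to the exact relation
\[
\overline{\mathcal{I}}_{;2}=\mathcal{I}_{;2}+2\varepsilon\,\phi_{;2;2}.
\]
(The same relation follows from \eqref{T tensor 2 of F bar} after setting $\rho=\varepsilon$.) Since the two $T$-tensors are governed by $\overline{\mathcal{I}}_{;2}$ and $\mathcal{I}_{;2}$, which by the displayed identity differ exactly by $2\varepsilon\phi_{;2;2}$, their vanishing loci coincide, i.e.\ the $T$-condition is preserved, precisely when $2\varepsilon\phi_{;2;2}=0$, that is $\phi_{;2;2}=0$.

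The routine but delicate point is the derivative bookkeeping in part \textbf{(a)}: one must verify that $\phi_{;2;2;2}$ genuinely vanishes, and that $\mathcal{I}_{;2}\equiv 0$ first forces $\rho_{;2}\equiv 0$ before one may differentiate once more to kill $\rho_{;2;2}$. The only conceptual subtlety is pinning down the meaning of ``preserved''; once the forward reading is fixed for \textbf{(a)} and the exact law $\overline{\mathcal{I}}_{;2}=\mathcal{I}_{;2}+2\varepsilon\phi_{;2;2}$ is derived for \textbf{(b)}, both assertions are immediate substitutions into \eqref{I; 2 bar formula}.
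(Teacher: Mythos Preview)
Your proof is correct and follows essentially the same route as the paper: both parts reduce the $T$- and $\overline{T}$-conditions to $\mathcal{I}_{;2}$ and $\overline{\mathcal{I}}_{;2}$ via \eqref{I; 2 bar formula}/\eqref{T tensor 2 of F bar}, use $\phi_{;2}$ isotropic $\Rightarrow \phi_{;2;2}=0$ together with $\mathcal{I}_{;2}=0$ to kill $\rho_{;2},\rho_{;2;2}$ in \textbf{(a)}, and use Remark \ref{remark rho_,1 and rho_,2}\textbf{(d)} to get $\rho=\varepsilon$ (hence $\rho_{;2}=\rho_{;2;2}=0$) in \textbf{(b)}. Your explicit identity $\overline{\mathcal{I}}_{;2}=\mathcal{I}_{;2}+2\varepsilon\phi_{;2;2}$ in part \textbf{(b)} and the displayed formula for $\rho_{;2}$ in part \textbf{(a)} are welcome clarifications that the paper leaves implicit.
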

\begin{proof}
\begin{description}
\item[(a)]If the Finsler metric $F$ satisfies the $T$-condition and $\phi_{;2}$ is an isotropic function, then
\begin{align}\label{isotropic I and phi_;2}
\phi_{;2;2}=\mathcal{I}_{;2}=0.
\end{align}
Consequently, from \eqref{formula of rho}, we deduce $\rho_{;2}=\rho_{;2;2}=0.
$
Hence,  $\overline{T}_{ijkh}=0$. 
\item[(b)] Since $\mathfrak{\overline{g}}=e^{4\phi}\mathfrak{g}$ is equivalent to $\sigma=(\phi_{;2})^2$ (by Remark \ref{remark rho_,1 and rho_,2} \textbf{(d)}). We get $\rho=\varepsilon$, by \eqref{formula of rho}.  
Thereby, $\rho_{;2}=\rho_{;2;2}=0$. Now, suppose $F$ satisfies $T$-condition. Thus, from \eqref{T tensor 2 of F bar}, we get  $\overline{F}$ satisfies $\overline{T}$-condition if and only if $\phi_{;2;2}=0.$
\end{description}\vspace*{-1.1cm}\[\qedhere\]
\end{proof}


\begin{proposition} \label{proposition of transform of Landsberg scalar}
Suppose that \eqref{the anisotropic conformal transformation} is the anisotropic conformal change of the two-dimensional conic pseudo-Finsler metric $F$. The Landsberg scalar $\overline{\mathcal{J}} $ of $\overline{F}$ is given by
\begin{align}\label{transform of landsberg scalar 1}
  F\overline{\mathcal{J}} =F\sqrt{\varepsilon\rho}\mathcal{J}+&\frac{\sqrt{\varepsilon\rho}}{2\rho}[( F^{2}\rho_{,1}-2\varepsilon Q\rho_{;2})(\mathcal{I}+2\varepsilon\phi_{;2}+\frac{\varepsilon\rho_{;2}}{2\rho})+F^{2}(4\varepsilon\rho\phi_{;2,1}-\varepsilon\rho_{;2,1})\nonumber \\
&-2Q(2\varepsilon\rho\mathcal{I}_{;2}+4\rho\phi_{;2;2}-\rho_{;2;2})].
\end{align}
\end{proposition}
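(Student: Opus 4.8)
The plan is to start from the surface definition of the Landsberg scalar applied to $\overline{F}$, namely $\overline{\mathcal{J}} = \overline{S}(\overline{\mathcal{I}}) = \overline{F}\,\overline{\mathcal{I}}_{,\,a}$, and then rewrite everything in terms of scalar derivatives taken with respect to $F$. First I would invoke the transformation rule \eqref{I,a bar formula} from Lemma \ref{formula of I;a I,a I,b}, which reads $\overline{\mathcal{I}}_{,\,a} = e^{-\phi}[\overline{\mathcal{I}}_{,1} - \tfrac{2\varepsilon}{F^2} Q\,\overline{\mathcal{I}}_{;2}]$. Since $\overline{F} = e^{\phi}F$ by \eqref{the anisotropic conformal transformation}, the exponential factors cancel and I obtain $\overline{\mathcal{J}} = F\,\overline{\mathcal{I}}_{,1} - \tfrac{2\varepsilon}{F}Q\,\overline{\mathcal{I}}_{;2}$, so that multiplying through by $F$ gives the clean intermediate identity $F\overline{\mathcal{J}} = F^2\overline{\mathcal{I}}_{,1} - 2\varepsilon Q\,\overline{\mathcal{I}}_{;2}$. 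At this stage the problem is reduced to substituting the explicit expressions for $\overline{\mathcal{I}}_{,1}$ and $\overline{\mathcal{I}}_{;2}$ already recorded in the lemma.

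Next I would substitute \eqref{I,1 bar formula} for $\overline{\mathcal{I}}_{,1}$ and \eqref{I; 2 bar formula} for $\overline{\mathcal{I}}_{;2}$ into this identity. Both carry the common prefactor $\tfrac{\sqrt{\varepsilon\rho}}{2\rho}$, which I would factor out of the whole right-hand side. Observing that $(\ln\rho)_{;2} = \rho_{;2}/\rho$, the two quantities $\overline{\mathcal{I}}_{,1}$ and $\overline{\mathcal{I}}_{;2}$ share the same structural factor $(\mathcal{I} + 2\varepsilon\phi_{;2} + \tfrac{\varepsilon\rho_{;2}}{2\rho})$; collecting the coefficients of this factor coming from $F^2\overline{\mathcal{I}}_{,1}$ (which contributes $F^2\rho_{,1}$) and from $-2\varepsilon Q\,\overline{\mathcal{I}}_{;2}$ (which contributes $-2\varepsilon Q\rho_{;2}$) produces exactly the grouped term $(F^2\rho_{,1} - 2\varepsilon Q\rho_{;2})(\mathcal{I} + 2\varepsilon\phi_{;2} + \tfrac{\varepsilon\rho_{;2}}{2\rho})$ appearing in the statement.

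The final step is to isolate the Landsberg scalar of $F$ from the remaining terms. Among the leftover contributions, the term $2\rho F^2\mathcal{I}_{,1}$ coming from $\overline{\mathcal{I}}_{,1}$, once multiplied by the prefactor $\tfrac{\sqrt{\varepsilon\rho}}{2\rho}$, collapses to $\sqrt{\varepsilon\rho}\,F^2\mathcal{I}_{,1} = F\sqrt{\varepsilon\rho}\,\mathcal{J}$, using the defining relation $\mathcal{J} = F\mathcal{I}_{,1}$ from \eqref{landesberg curvature for syrfaces}; I would pull this piece out as the leading summand. What survives inside the bracket is precisely $F^2(4\varepsilon\rho\phi_{;2,1} - \varepsilon\rho_{;2,1}) - 2Q(2\varepsilon\rho\mathcal{I}_{;2} + 4\rho\phi_{;2;2} - \rho_{;2;2})$, where the identity $\varepsilon^2 = 1$ is used to simplify the coefficients of $\phi_{;2;2}$ and $\rho_{;2;2}$. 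This reproduces \eqref{transform of landsberg scalar 1} exactly.

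I do not expect a genuine conceptual obstacle here: every ingredient is supplied by Lemma \ref{formula of I;a I,a I,b}, and the derivation is a linear substitution. The only real care required is bookkeeping—tracking signs and the $\varepsilon^2 = 1$ simplifications, identifying $(\ln\rho)_{;2}$ with $\rho_{;2}/\rho$, and recognizing which leftover term assembles into the $F\sqrt{\varepsilon\rho}\,\mathcal{J}$ summand rather than remaining inside the bracket.
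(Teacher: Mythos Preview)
Your proposal is correct and follows essentially the same route as the paper: both arrive at the intermediate identity $F\overline{\mathcal{J}} = F^{2}\overline{\mathcal{I}}_{,1} - 2\varepsilon Q\,\overline{\mathcal{I}}_{;2}$ and then substitute \eqref{I; 2 bar formula} and \eqref{I,1 bar formula}. The only cosmetic difference is that the paper reaches that identity by expanding $\overline{S}(\overline{\mathcal{I}})$ directly from the spray coefficients \eqref{transform of coefficient spray}, whereas you quote the already-packaged formula \eqref{I,a bar formula}; the paper even notes (Remark following the proposition) that these two routes are interchangeable.
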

\begin{proof}
The Landsberg scalar  $ \overline{\mathcal{J}} :=\overline{S}(\overline{\mathcal{I}})  =S(\overline{\mathcal{I}})-2\overline{G}^{i}\dot{\partial_{i}}\overline{\mathcal{I}}\overset{\eqref{transform of coefficient spray}}{=}S(\overline{\mathcal{I}})-2P\ell^{i}\dot{\partial_{i}}\overline{\mathcal{I}}-2Q \,m^{i}\dot{\partial_{i}}\overline{\mathcal{I}}.$\\
  From the homogeneity of $\overline{\mathcal{I}}$ and \eqref{transform of main scalar 2}  we get,
\begin{align}\label{Landsberge scalar of F bar of general form}
  F\overline{\mathcal{J}}& =FS(\overline{\mathcal{I}})-2\varepsilon\, Q\; {\overline{\mathcal{I}}}_{;2}=F^2\,\overline{\mathcal{I}}_{,1} -2\varepsilon \, Q\, {\overline{\mathcal{I}}}_{;2}.  
\end{align}
Hence, the result follows directly from \eqref{I; 2 bar formula} and \eqref{I,1 bar formula}. 
\end{proof}

\begin{remark}
\begin{description}
\item[(a)]  One can obtain an equivalent  formula of $\mathcal{\overline{J}}$ by using \eqref{transform of main scalar} and \eqref{Landsberge scalar of F bar of general form}, that is, 
\begin{align}\label{transform of landsberg scalar 2}
F\overline{\mathcal{J}} =&\frac{3\varepsilon}{2}(\varepsilon\rho)^{\frac{1}{2}}( F^{2}\rho_{,1}-2\varepsilon Q\rho_{;2})\left(  \mathcal{I}(1+\varepsilon \sigma)+\frac{1}{2}\sigma_{;2}+\phi_{;2}(\sigma+2\varepsilon)\right)  \nonumber \\
   &+(\varepsilon\rho)^{\frac{3}{2}}[(1+\varepsilon \sigma)(F^{2}\mathcal{I}_{,1}  -2\varepsilon Q\mathcal{I}_{;2})+(\varepsilon\mathcal{I}+\phi_{;2})(F^{2}\sigma_{,1}-2\varepsilon Q\sigma_{;2})\nonumber\\
   &+\frac{1}{2}(F^{2}\sigma_{;2,1}-2\varepsilon Q\sigma_{;2;2})+(\sigma+2\varepsilon)(F^{2}\phi_{;2,1}-2\varepsilon Q\phi_{;2;2})].
\end{align}
\item[(b)]We can get \eqref{I,a bar formula} directly from \eqref{Landsberge scalar of F bar of general form},   where $\overline{F}\;\overline{\mathcal{I}}_{,\,a}=\overline{\mathcal{J}}.$  
\end{description}
\end{remark}

\begin{theorem}
Consider the anisotropic conformal change \eqref{the anisotropic conformal transformation}  of the conic pseudo-Finsler surface $(M,F)$ and the main scalar $\overline{\mathcal{I}}$ is a first integral of the geodesic $S$ spray. Then  $\overline{F}$ is a Landsberg metric if and only if either $\phi_{;2}\phi_{,1}+\phi_{,1;2}-2\phi_{,2}=0$ or $\overline{F}$ is Berwaldian.
\end{theorem}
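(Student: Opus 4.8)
The plan is to reduce everything to the language of scalar derivatives taken with respect to $F$, and then to read off the result from the single transformation formula \eqref{Landsberge scalar of F bar of general form} for the Landsberg scalar of $\overline{F}$. First I would unpack the two hypotheses. Recalling that a function is a first integral of the geodesic spray $S$ exactly when its $(,1)$-derivative vanishes, the assumption that $\overline{\mathcal{I}}$ is a first integral of $S$ means precisely $\overline{\mathcal{I}}_{,1}=0$. On the other side, since $\overline{\mathcal{J}}=\overline{F}\,\overline{\mathcal{I}}_{,a}$ (the Remark following Proposition \ref{proposition of transform of Landsberg scalar}), the metric $\overline{F}$ is Landsbergian if and only if $\overline{\mathcal{J}}=0$.

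The next step is to substitute $\overline{\mathcal{I}}_{,1}=0$ into \eqref{Landsberge scalar of F bar of general form}, which collapses the Landsberg scalar to $F\overline{\mathcal{J}}=-2\varepsilon Q\,\overline{\mathcal{I}}_{;2}$. Since $F\neq0$ and $\varepsilon=\pm1$, the Landsberg condition $\overline{\mathcal{J}}=0$ is then equivalent to $Q\,\overline{\mathcal{I}}_{;2}=0$, i.e. to the dichotomy $Q=0$ or $\overline{\mathcal{I}}_{;2}=0$. The first branch is immediately identified with the first stated alternative: by \eqref{formula of Q only}, and because $\rho F^2\neq0$, one has $Q=0$ if and only if $\phi_{;2}\phi_{,1}+\phi_{,1;2}-2\phi_{,2}=0$.

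It then remains to match the second branch $\overline{\mathcal{I}}_{;2}=0$ with the Berwaldian property of $\overline{F}$. Here I would invoke the commutation formula \eqref{second comutation} applied to the $h(0)$ function $\overline{\mathcal{I}}$, namely $\overline{\mathcal{I}}_{,1;2}-\overline{\mathcal{I}}_{;2,1}=\overline{\mathcal{I}}_{,2}$; under the standing hypothesis $\overline{\mathcal{I}}_{,1}=0$ together with $\overline{\mathcal{I}}_{;2}=0$, both terms on the left vanish, forcing $\overline{\mathcal{I}}_{,2}=0$. Feeding $\overline{\mathcal{I}}_{,1}=\overline{\mathcal{I}}_{,2}=\overline{\mathcal{I}}_{;2}=0$ into the transformation rules \eqref{I,a bar formula} and \eqref{I,b bar formula} annihilates every term on their right-hand sides, so $\overline{\mathcal{I}}_{,a}=\overline{\mathcal{I}}_{,b}=0$, which is exactly the Berwald condition for $\overline{F}$. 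This completes the forward implication. For the converse, if $Q=0$ then $F\overline{\mathcal{J}}=-2\varepsilon Q\,\overline{\mathcal{I}}_{;2}=0$ gives Landsberg at once, while if $\overline{F}$ is Berwaldian then $\overline{\mathcal{I}}_{,a}=0$ holds a fortiori, so $\overline{F}$ is Landsbergian; thus either alternative implies the Landsberg property.

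The only genuinely delicate point is conceptual rather than computational: I must ensure that the second factor produced by $Q\,\overline{\mathcal{I}}_{;2}=0$ is genuinely the Berwald property and not merely the weaker $T$-condition $\overline{\mathcal{I}}_{;2}=0$. This is exactly where the first-integral hypothesis $\overline{\mathcal{I}}_{,1}=0$ does the essential work, since it is what allows the commutation formula \eqref{second comutation} to upgrade $\overline{\mathcal{I}}_{;2}=0$ to the full system $\overline{\mathcal{I}}_{,a}=\overline{\mathcal{I}}_{,b}=0$. I would also be careful to record that $\rho F^2\neq0$ is what legitimises passing freely between the vanishing of $Q$ and the explicit condition on $\phi$.
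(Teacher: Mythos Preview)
Your proposal is correct and follows essentially the same route as the paper's proof: both reduce the Landsberg condition, under the hypothesis $\overline{\mathcal{I}}_{,1}=0$, to the product $Q\,\overline{\mathcal{I}}_{;2}=0$, identify the factor $Q=0$ with the $\phi$-condition via \eqref{formula of Q only}, and upgrade $\overline{\mathcal{I}}_{;2}=0$ to the Berwald property using the commutation relation \eqref{second comutation} together with \eqref{I,a bar formula}--\eqref{I,b bar formula}. The only cosmetic difference is that you start from \eqref{Landsberge scalar of F bar of general form} rather than \eqref{I,a bar formula} (equivalent up to the factor $e^{\phi}/F$), and you spell out the converse direction explicitly, which the paper leaves implicit.
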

\begin{proof}
 From \eqref{I,a bar formula},  $\overline{F}$ is a Landsberg metric if and only if $\overline{\mathcal{I}}_{,1}=\frac{2\varepsilon}{F^2} Q\overline{\mathcal{I}}_{;2}.$
Let $\overline{\mathcal{I}}_{,1}=0,$  that is,    $\overline{F}$ is  Landsbergian if and only if either $Q=0$ or  $\overline{\mathcal{I}}_{;2}=0$. From \eqref{formula of Q only} and \eqref{second comutation}, $\overline{F}$ is a Landsbergian metric if and only if either $\phi_{;2}\phi_{,1}+\phi_{,1;2}-2\phi_{,2}=0$ or $\overline{\mathcal{I}}_{,2}=0$ . The latter gives $0=\overline{\mathcal{I}}_{,1}=\overline{\mathcal{I}}_{;2}=\overline{\mathcal{I}}_{,2}$ which means, by \eqref{I,a bar formula} and \eqref{I,b bar formula}, that $\overline{F}$ is Berwaldian.  
\end{proof}

\begin{proposition}\label{ special rho=varepslon}
Suppose that $(M,F)$ is a conic pseudo-Finsler surface anisotropically conformallt transformed by \eqref{the anisotropic conformal transformation}  with $\sigma=(\phi_{;2})^{2}$ to $(M,\overline{F})$. Then, we get
\begin{description}
\item[(a)] $F\overline{\mathcal{J}}=F\mathcal{J}+2\varepsilon F^2\phi_{;2,1}-2\varepsilon Q(\mathcal{I}_{;2}+2\varepsilon\phi_{;2;2}).$
\item[(b)] If $\phi$ is the first integral of the geodesic spary $S$, then the property of being Landsbergian is preserved if and only if either $\phi$ is horizontally constant  or $\mathcal{I}_{;2}+2\varepsilon\phi_{;2;2}=1$.  
\end{description}
\end{proposition}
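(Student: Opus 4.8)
The plan is to treat the two parts separately, both reducing to the already-established transformation formula \eqref{transform of landsberg scalar 1} for the Landsberg scalar once the hypothesis $\sigma=(\phi_{;2})^{2}$ is exploited. First I would observe that $\sigma=(\phi_{;2})^{2}$ forces $\sigma+\varepsilon-(\phi_{;2})^{2}=\varepsilon$, so by \eqref{formula of rho} we obtain $\rho=\varepsilon$. Since $\varepsilon=\pm1$, this is a constant, whence $\sqrt{\varepsilon\rho}=1$ and every $h$- and $v$-scalar derivative of $\rho$ vanishes, that is, $\rho_{,1}=\rho_{;2}=\rho_{;2,1}=\rho_{;2;2}=0$. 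This is the decisive simplification that drives both parts.

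For part \textbf{(a)}, I would substitute $\rho=\varepsilon$ together with these vanishing derivatives directly into \eqref{transform of landsberg scalar 1}. The prefactor $\frac{\sqrt{\varepsilon\rho}}{2\rho}$ collapses to $\frac{1}{2\varepsilon}$, the bracket loses all of its $\rho_{,1}$, $\rho_{;2}$, $\rho_{;2,1}$ and $\rho_{;2;2}$ terms, and using $\varepsilon^{2}=1$ and $\tfrac{1}{\varepsilon}=\varepsilon$ the surviving terms reorganize precisely into $F\mathcal{J}+2\varepsilon F^{2}\phi_{;2,1}-2\varepsilon Q(\mathcal{I}_{;2}+2\varepsilon\phi_{;2;2})$, which is the claimed identity.

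For part \textbf{(b)}, I would use that $\phi$ being a first integral of $S$ is equivalent to $\phi_{,1}=0$, via the characterization $S(f)=F f_{,1}$ recorded earlier. Since $\phi_{,1}$ vanishes identically, so does $\phi_{,1;2}=(\phi_{,1})_{;2}$; feeding this and $\rho=\varepsilon$ into \eqref{formula of Q only} yields $Q=-F^{2}\phi_{,2}$. Next, the commutation formula \eqref{second comutation} applied to $\phi$ gives $\phi_{,1;2}-\phi_{;2,1}=\phi_{,2}$, hence $\phi_{;2,1}=-\phi_{,2}$. Inserting these expressions for $Q$ and $\phi_{;2,1}$ into the formula of part \textbf{(a)} and assuming $F$ is Landsbergian ($\mathcal{J}=0$) collapses everything to $F\overline{\mathcal{J}}=2\varepsilon F^{2}\phi_{,2}(\mathcal{I}_{;2}+2\varepsilon\phi_{;2;2}-1)$. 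Since $F^{2}\neq0$ and $\varepsilon\neq0$, the Landsberg property is preserved, i.e. $\overline{\mathcal{J}}=0$, exactly when $\phi_{,2}(\mathcal{I}_{;2}+2\varepsilon\phi_{;2;2}-1)=0$, which splits into $\phi_{,2}=0$ (equivalently $\phi$ horizontally constant, as $\phi_{,1}=0$ already holds) or $\mathcal{I}_{;2}+2\varepsilon\phi_{;2;2}=1$.

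The computations are routine once $\rho=\varepsilon$ is in hand; the two points requiring genuine care are the bookkeeping of which $\rho$-derivatives vanish and the correct handling of the non-commuting scalar derivatives, namely distinguishing $\phi_{,1;2}$ from $\phi_{;2,1}$ through \eqref{second comutation}. The interpretive subtlety I expect to be most delicate to phrase cleanly is the meaning of \quotes{the Landsberg property is preserved}: I would read it as the implication $F$ Landsbergian $\Rightarrow$ $\overline{F}$ Landsbergian, so that imposing $\mathcal{J}=0$ is legitimate and the residual term $2\varepsilon F^{2}\phi_{,2}(\mathcal{I}_{;2}+2\varepsilon\phi_{;2;2}-1)$ alone governs the stated equivalence.
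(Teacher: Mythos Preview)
Your proposal is correct and follows essentially the same route as the paper: both reduce to substituting $\rho=\varepsilon$ (and hence all scalar derivatives of $\rho$ vanish) into \eqref{transform of landsberg scalar 1} for part \textbf{(a)}, and for part \textbf{(b)} both compute $Q=-F^{2}\phi_{,2}$ from \eqref{formula of Q only}, use \eqref{second comutation} to get $\phi_{;2,1}=-\phi_{,2}$, and arrive at $\overline{\mathcal{J}}=\mathcal{J}+2\varepsilon F\phi_{,2}(\mathcal{I}_{;2}+2\varepsilon\phi_{;2;2}-1)$. Your explicit remarks about $\phi_{,1;2}=0$ and the interpretation of \quotes{preserved} are just spelled out more carefully than in the paper, but the argument is the same.
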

\begin{proof}Let  $\sigma=(\phi_{;2})^{2}$. From \eqref{formula of rho}, we get 
$\rho=\varepsilon \text{ and } \phi_{;2;2}+\varepsilon\mathcal{I}\phi_{;2}+(\phi_{;2})^{2}=0.$
\begin{description}
\item[(a)] It follows from \eqref{transform of landsberg scalar 1}.
\item[(b)] If $\phi_{,1}=0$, by  \eqref{formula of Q only}, we get $ Q=-F^2\phi_{,2}.$ Thus, \textbf{(a)} becomes
\begin{align*}
\overline{\mathcal{J}}=\mathcal{J}+2\varepsilon F[\phi_{;2,1}+\phi_{,2}(\mathcal{I}_{;2}+2\varepsilon\phi_{;2;2})].
\end{align*} 
Applying \eqref{second comutation} for $f=\phi $, we get $\phi_{;2,1}=-\phi_{,2}$, which implies
\begin{align*}
\overline{\mathcal{J}}=\mathcal{J}+2\varepsilon F\phi_{,2}[\mathcal{I}_{;2}+2\varepsilon\phi_{;2;2}-1].
\end{align*} 
Then  the property of being Landsbergian is preserved if and only if either $\mathcal{I}_{;2}+2\varepsilon\phi_{;2;2}=1$ or $\phi_{,2}=0$. Clearly, $\phi_{,1}=0$ together with $\phi_{,2}=0$ gives $\phi$ is horizontally constant.
\end{description}
\vspace*{-0.6cm}\[\qedhere\]
\end{proof}

\begin{proposition}\label{riemannian to Landsberg}
Assume that \eqref{the anisotropic conformal transformation} is the anisotropic conformal change of conic pseudo  Riemannian metric $F$  with $\phi_{;2}$ being horizontally constant. The Finsler metric $\overline {F}$ is Landsberg if and only if either  $\overline{T}$-tensor vanishes identically or  $\phi_{;2}\phi_{,1}=\phi_{,2}.$
\end{proposition}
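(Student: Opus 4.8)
The plan is to reduce the Landsberg condition for $\overline{F}$ to the vanishing of a single product and then match its two factors with the two alternatives in the statement. Since $\overline{F}$ is Landsbergian precisely when $\overline{\mathcal{I}}_{,a}=0$ (recall $\overline{\mathcal{J}}=\overline{F}\,\overline{\mathcal{I}}_{,a}$), I would start from the transformation formula \eqref{I,a bar formula}, $\overline{\mathcal{I}}_{,a}=e^{-\phi}[\overline{\mathcal{I}}_{,1}-\tfrac{2\varepsilon}{F^2}Q\,\overline{\mathcal{I}}_{;2}]$. The first step is to show that under the present hypotheses the term $\overline{\mathcal{I}}_{,1}$ vanishes, so that the Landsberg condition reduces to $Q\,\overline{\mathcal{I}}_{;2}=0$.

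To prove $\overline{\mathcal{I}}_{,1}=0$ I would use that $F$ is Riemannian, so $\mathcal{I}=0$ and hence $\mathcal{I}_{,1}=0$, together with the assumption that $\phi_{;2}$ is horizontally constant, so $\phi_{;2,1}=0$. By Corollary \ref{proposition phi_;2  d_h closed and F riemannian}, these two assumptions force $\rho$ and $\rho_{;2}$ to be horizontally constant; in particular $\rho_{,1}=\rho_{;2,1}=0$. Substituting these four vanishing quantities into \eqref{I,1 bar formula} annihilates every term on its right-hand side, yielding $\overline{\mathcal{I}}_{,1}=0$. Hence $\overline{F}$ is Landsberg if and only if $Q\,\overline{\mathcal{I}}_{;2}=0$, that is, if and only if either $Q=0$ or $\overline{\mathcal{I}}_{;2}=0$.

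I would then interpret the two factors. The vanishing of $\overline{\mathcal{I}}_{;2}$ is, by remark \textbf{(a)} following \eqref{T tensor of F bar}, exactly the statement that the $\overline{T}$-tensor vanishes identically, which accounts for the first alternative. For the factor $Q$, the defining formula \eqref{formula of Q only} gives $2Q=\varepsilon\rho F^2(\phi_{;2}\phi_{,1}+\phi_{,1;2}-2\phi_{,2})$, and since $\rho\neq 0$ the vanishing of $Q$ is equivalent to $\phi_{;2}\phi_{,1}+\phi_{,1;2}-2\phi_{,2}=0$.

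The only genuinely non-mechanical step, and therefore the main obstacle, is reconciling this last relation with the cleaner condition $\phi_{;2}\phi_{,1}=\phi_{,2}$ stated in the proposition. The key is the commutation formula \eqref{second comutation}, which for the $h(0)$ function $f=\phi$ reads $\phi_{,1;2}-\phi_{;2,1}=\phi_{,2}$; since $\phi_{;2}$ is horizontally constant we have $\phi_{;2,1}=0$, so $\phi_{,1;2}=\phi_{,2}$. Substituting this into $\phi_{;2}\phi_{,1}+\phi_{,1;2}-2\phi_{,2}=0$ collapses it to $\phi_{;2}\phi_{,1}-\phi_{,2}=0$, i.e. $\phi_{;2}\phi_{,1}=\phi_{,2}$, which closes the equivalence in both directions and completes the proof.
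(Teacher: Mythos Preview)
Your proof is correct and follows essentially the same route as the paper: show $\overline{\mathcal{I}}_{,1}=0$ via Corollary \ref{proposition phi_;2  d_h closed and F riemannian} and \eqref{I,1 bar formula}, reduce the Landsberg condition to $Q\,\overline{\mathcal{I}}_{;2}=0$ through \eqref{I,a bar formula}, and then use the commutation formula \eqref{second comutation} with $\phi_{;2,1}=0$ to simplify $Q=0$ to $\phi_{;2}\phi_{,1}=\phi_{,2}$. The only cosmetic difference is that the paper applies \eqref{second comutation} \emph{before} writing out $Q$ (obtaining $2Q=\varepsilon\rho F^2(\phi_{;2}\phi_{,1}-\phi_{,2})$ directly), whereas you apply it afterwards; the logic is identical.
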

\begin{proof}
From Corollary \ref{proposition phi_;2  d_h closed and F riemannian} and \eqref{I,1 bar formula}, we obtain
\begin{equation}\label{I bar ,1=0}
\mathcal{\overline{I}}_{,1}=0.
\end{equation}  
Applying \eqref{second comutation} for $f=\phi$, by \eqref{formula of Q only}, we get $2Q=\varepsilon\rho F^2(\phi_{;2}\phi_{,1}+\phi_{;2,1}-\phi_{,2})$, but $\phi_{;2}$ is horizontally constant, then
\begin{align}\label{1st relarion in proposition of F landesberg}
2Q&=\varepsilon\rho F^2(\phi_{;2}\phi_{,1}-\phi_{,2}).
\end{align} 
Plugging \eqref{I bar ,1=0} and  \eqref{1st relarion in proposition of F landesberg} into \eqref{I,a bar formula}, we have $\overline{\mathcal{I}}_{,\,a}= - e^{-\phi}\,\rho\, \overline{\mathcal{I}}_{;2}\,(\phi_{;2}\,\phi_{,1}-\phi_{,2}).$
Hence, $\overline{F}$ is  Landsbergian ($\overline{\mathcal{I}}_{,a}=0$)  if and only if  either $\overline{F}$ satisfies $\overline {T}$-condition $(\overline{\mathcal{I}}_{;2}=0)$ or  $\phi_{;2}\,\phi_{,1}=\phi_{,2}.$ 
\end{proof}
\begin{proposition}\label{in view of unicorn}
 Assume that $(M,F)$ is anisotropically conformally changed to $(M,\overline{F})$. The Finsler metric $\overline{F} $ is  Berwaldian if one of the following conditions is satisfied:
\begin{description}
\item[(a)]  $\overline{F} $ has an isotropic main scalar and $\overline{\mathcal{I}}_{,1}=0$. 
\item[(b)]   $\overline{F}$ is  Landsbergian provided that $F^2\overline{\mathcal{I}}_{,2}=( P+\varepsilon Q_{;2}-\mathcal{I}Q)\overline{\mathcal{I}}_{;2}.$
\end{description}
\end{proposition}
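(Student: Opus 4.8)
The plan is to prove both parts by direct substitution into the three transformation formulas of Lemma~\ref{formula of I;a I,a I,b}, namely \eqref{I;b bar formula}, \eqref{I,a bar formula} and \eqref{I,b bar formula}. By the Berwaldian criterion for surfaces stated after \eqref{douglas curvature} (read off in the barred frame), $\overline{F}$ is Berwaldian precisely when both h-scalar derivatives of the main scalar $\overline{\mathcal{I}}$ with respect to $\overline{F}$ vanish, i.e. $\overline{\mathcal{I}}_{,\,a}=\overline{\mathcal{I}}_{,\,b}=0$. Since those formulas express $\overline{\mathcal{I}}_{,\,a}$ and $\overline{\mathcal{I}}_{,\,b}$ through the derivatives $\overline{\mathcal{I}}_{;2},\ \overline{\mathcal{I}}_{,1},\ \overline{\mathcal{I}}_{,2}$ taken with respect to the background metric $F$, the entire task is to show that these reduce to zero under the stated hypotheses.

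For part \textbf{(a)}, I would first observe that an isotropic main scalar $\overline{\mathcal{I}}=\overline{\mathcal{I}}(x)$ satisfies $\dot{\partial}_i\overline{\mathcal{I}}=0$, hence $\overline{\mathcal{I}}_{;2}=0$; by \eqref{I;b bar formula} this is exactly the statement that $\overline{F}$ satisfies the $T$-condition. Combining $\overline{\mathcal{I}}_{;2}=0$ with the hypothesis $\overline{\mathcal{I}}_{,1}=0$, formula \eqref{I,a bar formula} gives $\overline{\mathcal{I}}_{,\,a}=0$ immediately. To dispose of the last derivative I would apply the commutation formula \eqref{second comutation} to the $h(0)$ function $\overline{\mathcal{I}}$: because $\overline{\mathcal{I}}_{,1}$ and $\overline{\mathcal{I}}_{;2}$ vanish identically, their iterated derivatives $\overline{\mathcal{I}}_{,1;2}$ and $\overline{\mathcal{I}}_{;2,1}$ vanish, so \eqref{second comutation} collapses to $\overline{\mathcal{I}}_{,2}=\overline{\mathcal{I}}_{,1;2}-\overline{\mathcal{I}}_{;2,1}=0$. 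Substituting $\overline{\mathcal{I}}_{,1}=\overline{\mathcal{I}}_{,2}=\overline{\mathcal{I}}_{;2}=0$ into \eqref{I,b bar formula} then yields $\overline{\mathcal{I}}_{,\,b}=0$, so $\overline{F}$ is Berwaldian.

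For part \textbf{(b)}, the Landsberg hypothesis $\overline{\mathcal{I}}_{,\,a}=0$ reads, via \eqref{I,a bar formula}, as $\overline{\mathcal{I}}_{,1}=\frac{2\varepsilon}{F^2}Q\,\overline{\mathcal{I}}_{;2}$. I would feed this into the bracket of \eqref{I,b bar formula}: the two contributions proportional to $\phi_{;2}Q$ then cancel and, using $\varepsilon^2=1$, the coefficient of $\overline{\mathcal{I}}_{;2}$ collapses to $-\frac{1}{F^2}(P+\varepsilon Q_{;2}-\mathcal{I}Q)$, so the bracket becomes $\overline{\mathcal{I}}_{,2}-\frac{1}{F^2}(P+\varepsilon Q_{;2}-\mathcal{I}Q)\overline{\mathcal{I}}_{;2}$. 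The standing hypothesis $F^2\overline{\mathcal{I}}_{,2}=(P+\varepsilon Q_{;2}-\mathcal{I}Q)\overline{\mathcal{I}}_{;2}$ is exactly what makes this bracket vanish, giving $\overline{\mathcal{I}}_{,\,b}=0$; together with $\overline{\mathcal{I}}_{,\,a}=0$ this establishes that $\overline{F}$ is Berwaldian.

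I do not expect a genuine obstacle, since both parts are substitutions into Lemma~\ref{formula of I;a I,a I,b}; the care lies entirely in the bookkeeping. In \textbf{(a)} one must check that the commutation formula \eqref{second comutation}, stated for $h(0)$ functions, legitimately applies to $\overline{\mathcal{I}}$ (it does, the main scalar being $h(0)$) and that $\overline{\mathcal{I}}_{,1}\equiv0$, $\overline{\mathcal{I}}_{;2}\equiv0$ are used as identities, so that differentiating them again is harmless. In \textbf{(b)} the only delicate moment is the $\varepsilon$-algebra rewriting $\varepsilon P+Q_{;2}-\varepsilon\mathcal{I}Q$ as $\varepsilon(P+\varepsilon Q_{;2}-\mathcal{I}Q)$; once this and the cancellation of the $\phi_{;2}Q$ terms are carried out, the claimed vanishing follows at once.
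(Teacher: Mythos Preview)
Your proposal is correct and follows essentially the same route as the paper: in (a) you use isotropy to get $\overline{\mathcal{I}}_{;2}=0$, combine with $\overline{\mathcal{I}}_{,1}=0$ and the commutation \eqref{second comutation} to force $\overline{\mathcal{I}}_{,2}=0$, then read off $\overline{\mathcal{I}}_{,a}=\overline{\mathcal{I}}_{,b}=0$ from Lemma~\ref{formula of I;a I,a I,b}; in (b) you substitute the Landsberg relation $\overline{\mathcal{I}}_{,1}=\frac{2\varepsilon}{F^2}Q\,\overline{\mathcal{I}}_{;2}$ into \eqref{I,b bar formula}, cancel the $\phi_{;2}Q$ terms and use $\varepsilon^2=1$ to reduce the bracket to $\overline{\mathcal{I}}_{,2}-\frac{1}{F^2}(P+\varepsilon Q_{;2}-\mathcal{I}Q)\overline{\mathcal{I}}_{;2}$, which the hypothesis kills. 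Your write-up is in fact a bit more explicit than the paper's (which omits the overall factor $e^{-\phi}\sqrt{\varepsilon\rho}$ in the intermediate step and phrases the isotropy implication somewhat tersely), but the argument is the same.
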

\begin{proof}
\begin{description}
\item[(a)] The fact that $\overline{F} $ having an isotropic main scalar $\overline{\mathcal{I}}$ implies $\overline{\mathcal{I}}_{,a}=0$ and $\overline{\mathcal{I}}_{,b}=\overline{\mathcal{I}}_{,2}$.\\
Applying \eqref{second comutation} for $f=\overline{\mathcal{I}}$, we get $\overline{\mathcal{I}}_{,2}=0$. Hence, $\overline{\mathcal{I}}_{,a}=0$ and $\overline{\mathcal{I}}_{,b}=0$ which means $\overline{F}$ is Berwaldian.
\item[(b)] Since $\overline{F}$ is  Landsbergian, it is equivalent to $\overline{\mathcal{I}}_{,a}=0$, then by \eqref{I,a bar formula} 
\begin{equation}\label{equivelent to landsberg 2}
\overline{\mathcal{I}}_{,1}=\frac{2\varepsilon}{F^2}Q\overline{\mathcal{I}}_{;2}.
\end{equation} 
Plugging \eqref{equivelent to landsberg 2} into \eqref{I,b bar formula}, we get
\begin{equation}\label{equation 3.8}
\overline{\mathcal{I}}_{,b}=\overline{\mathcal{I}}_{,2}-\frac{2\varepsilon}{F^2}\phi_{;2}Q\overline{\mathcal{I}}_{;2}-\frac{\varepsilon}{F^2}(\varepsilon P+Q_{;2}-\varepsilon\mathcal{I}Q-2\phi_{;2}Q)\overline{\mathcal{I}}_{;2}.
\end{equation}
As $ F^2\overline{\mathcal{I}}_{,2}=( P+\varepsilon Q_{;2}-\mathcal{I}Q)\overline{\mathcal{I}}_{;2}$ (by assumption), then $\overline{\mathcal{I}}_{,b}=0$. Hence, $\overline{F}$ is Berwaldian.
\end{description}
\vspace{-0.5cm}{\qedhere}
\end{proof}
\section{ Anisotropically Conformally Flat and  Douglas Surfaces}
\subsection{ Locally Minkowski surfaces}
\begin{definition}\emph{\cite{chernbook2000}}
A Finsler space is called  locally Minkowski metric if, at each point $x \in M,$ there exists local coordinates $(x^{i},y^{i})$ on $TM$ such that $F$ is a function of $y$ only, in addition,   $G^i=0$. This coordinate system is called adopted coordinate system. 
\end{definition}

In this subsection, we work in the adopted coordinate system with out mentioning that. 
\begin{definition}
A  pseudo-Finsler manifold $(M,F)$ is said to be anisotropically conformally flat if $F$ is anisotropically conformally related to a locally Minkowski metric.
\end{definition}

\begin{proposition}\label{theorem anisotropic conf flat}
Let the conic pseudo-Finsler surface $(M,F)$ be anisotropically conformally changed to  $(M,\overline{F})$. Then, the following properties  are equivalent:     
\begin{description}\label{proposition of anisotropic flat}
    \item[(a)]$F \partial_{j}\phi+\partial_{j}F=0.$
    \item[(b)] $F$ is anisotropically conformally flat.
    \item[(c)]  $P=-\frac{1}{2}y^r(\partial_r F)$ and $Q=-\frac{1}{2}\frac{F^2(\dot{\partial_2}\partial_1 F-\dot{\partial_1}\partial_2 F)}{h}.$
\end{description}
\end{proposition}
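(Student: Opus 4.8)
The plan is to prove the cycle $\textbf{(a)}\Rightarrow\textbf{(b)}\Rightarrow\textbf{(c)}\Rightarrow\textbf{(a)}$, organizing everything around one reformulation of $\textbf{(a)}$. Since $\overline{F}=e^{\phi}F$, differentiating in the base coordinates gives
\begin{equation*}
\partial_{j}\overline{F}=\partial_{j}(e^{\phi}F)=e^{\phi}\,(F\,\partial_{j}\phi+\partial_{j}F),
\end{equation*}
and as $e^{\phi}\neq0$ this shows that $\textbf{(a)}$ is precisely the assertion $\partial_{j}\overline{F}=0$; that is, in the adopted coordinate system in which we work, $\overline{F}$ is a function of the directional argument $y$ alone.

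First I would establish $\textbf{(a)}\Rightarrow\textbf{(b)}$ and $\textbf{(b)}\Rightarrow\textbf{(c)}$. If $\partial_{j}\overline{F}=0$ then $\overline{F}$ is independent of position, so every positional derivative entering its geodesic spray vanishes and $\overline{G}^{i}=0$; by definition $(M,\overline{F})$ is then locally Minkowski, i.e.\ $F$ is anisotropically conformally flat, giving $\textbf{(b)}$. Conversely, locally Minkowskian $\overline{F}$ forces $\overline{G}^{i}=0$, and substituting the spray transformation \eqref{transform of coefficient spray}, $\overline{G}^{i}=G^{i}+Q\,m^{i}+P\,\ell^{i}$, together with the explicit surface spray \eqref{geodesic of 2 dim} yields
\begin{equation*}
\overline{G}^{i}=\left[\frac{1}{2}\,y^{r}(\partial_{r}F)+P\right]\ell^{i}+\left[\frac{1}{2}\,\frac{F^{2}(\dot{\partial_2}\partial_1 F-\dot{\partial_1}\partial_2 F)}{h}+Q\right]m^{i}=0.
\end{equation*}
Because $(\ell^{i},m^{i})$ is a frame, the two bracketed coefficients vanish separately, which is exactly $\textbf{(c)}$.

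It remains to close the cycle with $\textbf{(c)}\Rightarrow\textbf{(a)}$. Reading the coefficient matching backwards, $\textbf{(c)}$ returns $\overline{G}^{i}=0$, hence the Barthel coefficients $\overline{G}^{r}_{i}=\dot{\partial}_{i}\overline{G}^{r}=0$. Now $\overline{F}$ is horizontally constant for its own connection, the analogue for $\overline{F}$ of the identity $\delta_{i}F=0$ used to derive \eqref{geodesic of 2 dim}, namely $\overline{\delta}_{i}\overline{F}=\partial_{i}\overline{F}-\overline{G}^{r}_{i}\,\overline{\ell}_{r}=0$ with $\dot{\partial}_{r}\overline{F}=\overline{\ell}_{r}$; combining this with $\overline{G}^{r}_{i}=0$ forces $\partial_{i}\overline{F}=0$, which is $\textbf{(a)}$.

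The hard part will be exactly this bidirectional translation between the metric condition $\partial_{j}\overline{F}=0$ and the spray condition $\overline{G}^{i}=0$: the passage from the metric to the vanishing spray is immediate, but recovering $\partial_{i}\overline{F}=0$ from $\overline{G}^{i}=0$ genuinely needs the horizontal-constancy identity $\overline{\delta}_{i}\overline{F}=0$, not a direct differentiation. A secondary point deserving one sentence is the alignment of the fixed working coordinates with the adopted coordinate system, which is legitimate here because $\partial_{j}\overline{F}=0$ already displays the working coordinates as adopted ones.
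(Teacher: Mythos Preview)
Your proof is correct and follows essentially the same approach as the paper: both arguments hinge on the identity $\partial_{j}\overline{F}=e^{\phi}(F\partial_{j}\phi+\partial_{j}F)$ for the link with \textbf{(a)}, and on decomposing $\overline{G}^{i}$ in the $(\ell^{i},m^{i})$ frame via \eqref{geodesic of 2 dim} and \eqref{transform of coefficient spray} for the link with \textbf{(c)}. The only organizational difference is that the paper proves the two biconditionals $\textbf{(a)}\Leftrightarrow\textbf{(b)}$ and $\textbf{(b)}\Leftrightarrow\textbf{(c)}$ directly, identifying ``$\overline{F}$ locally Minkowski'' first with $\partial_{j}\overline{F}=0$ and then with $\overline{G}^{i}=0$, whereas you run the cycle $\textbf{(a)}\Rightarrow\textbf{(b)}\Rightarrow\textbf{(c)}\Rightarrow\textbf{(a)}$ and make the passage $\overline{G}^{i}=0\Rightarrow\partial_{i}\overline{F}=0$ explicit through $\overline{\delta}_{i}\overline{F}=0$; this last step is left implicit in the paper, so your version is in fact slightly more careful on that point.
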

\begin{proof}
$\textbf{(a)}\Longleftrightarrow\textbf{(b)}$
Under the anisotropic transformation $\overline{F}=e^{\phi}F$, we have
\begin{equation}
\partial_i \overline{F}=e^\phi(F\partial_i\phi+\partial_iF).
\end{equation}
Hence, $F$ being anisotropically conformally flat is equivalent to $F \partial_{j}\phi+\partial_{j}F=0.$\\
$\textbf{(b)}\Longleftrightarrow\textbf{(c)}$ By \eqref{geodesic of 2 dim} and  \eqref{transform of coefficient spray}, we obtain 
\begin{equation*}
\overline{G}^i=G^i+Qm^i+P\ell^i=(\frac{1}{2} y^r \partial_r F+P)\ell^i+(\frac{F^2(\dot{\partial_2}\partial_1 F-\dot{\partial_1}\partial_2 F)}{2h}+Q)m^i .
\end{equation*}
The Finsler metric $F$ is anisotropically flat if and only if $\overline{G}^i =0$, that is, 
\begin{equation}\label{spray and anisotropically flat}
0=(\frac{1}{2} y^r \partial_r F+P)\ell^i+(\frac{F^2(\dot{\partial_2}\partial_1 F-\dot{\partial_1}\partial_2 F)}{2h}+Q)m^i .
\end{equation}
Multiple \eqref{spray and anisotropically flat} by $\ell_i$ and $m_i$ we get  $y^r(\partial_r F)+2P=0$ and $\frac{F^2(\dot{\partial_2}\partial_1 F-\dot{\partial_1}\partial_2 F)}{h}+2Q=0.$ Hence, $F$ being anisotropically conformally flat is equivalent to $P=-\frac{1}{2}y^r(\partial_r F)$ and $Q=-\frac{1}{2}\frac{F^2(\dot{\partial_2}\partial_1 F-\dot{\partial_1}\partial_2 F)}{h}.$
\end{proof}


\begin{corollary}\label{proposition anisotropic conf flat}
Given that $(M,F)$ is a locally Minkowskian surface. Then, $F$ is anisotropically conformally flat under the anisotropic change \eqref{the anisotropic conformal transformation}  if and only if the  conformal  factor is either a function of $y$ alone or  homothetic.
\end{corollary}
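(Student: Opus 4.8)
The plan is to specialize the characterization of anisotropic conformal flatness already established in Proposition \ref{theorem anisotropic conf flat} to the locally Minkowskian setting. First I would pass to the adopted coordinate system of $F$, in which, by definition of local Minkowskianity, $F$ is a function of the directional argument alone and $G^i=0$. This yields the three simplifications that drive the whole argument: $\partial_j F=0$ for every $j$; the Barthel coefficients $G^j_i=\dot{\partial}_i G^j$ vanish, so that $\delta_i=\partial_i$; and, since $F$ depends on $y$ only, $\dot{\partial}_2\partial_1 F-\dot{\partial}_1\partial_2 F=0$.

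With these in hand I would invoke the equivalence $\textbf{(a)}\Leftrightarrow\textbf{(b)}$ of Proposition \ref{theorem anisotropic conf flat}: the surface is anisotropically conformally flat exactly when $F\partial_j\phi+\partial_j F=0$. Substituting $\partial_j F=0$ collapses this to $F\partial_j\phi=0$, and since $F$ is nowhere zero on $\mathcal{A}$, to $\partial_j\phi=0$ for all $j$, which is precisely the assertion that $\phi$ is independent of position, i.e. a function of $y$ alone. As a cross-check, the same conclusion should fall out of equivalence $\textbf{(c)}$: in the Minkowskian frame the prescribed right-hand sides for $P$ and $Q$ vanish identically, forcing $P=Q=0$, whereupon \eqref{relation between P and Q} gives $\phi_{,1}=0$ and then \eqref{formula of Q only} gives $\phi_{,2}=0$, so that $\phi$ is horizontally constant; since $\delta_i=\partial_i$ here, horizontal constancy coincides with independence of $x$.

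It then remains only to read off the stated dichotomy. The solution set $\phi=\phi(y)$ of $\partial_j\phi=0$ consists of the $h(0)$ functions depending on the direction alone; these split into the genuinely direction-dependent factors (the anisotropic case) and the constant factors, the latter being exactly the homothetic changes. Both implications of the equivalence are then obtained by traversing this chain in either direction. I do not anticipate a genuine obstacle here; the only point demanding care is the homogeneity bookkeeping, namely confirming that the identity $\delta_i=\partial_i$ in the Minkowskian frame legitimately converts \emph{horizontally constant} into \emph{independent of} $x$, and observing that non-constant $0$-homogeneous functions of $y$ do exist on the conic domain, so that the "function of $y$ alone" branch is genuinely broader than the homothetic branch.
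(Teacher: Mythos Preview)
Your proposal is correct and follows essentially the same route as the paper: both invoke the equivalence $\textbf{(a)}\Leftrightarrow\textbf{(b)}$ of Proposition~\ref{theorem anisotropic conf flat}, use $\partial_j F=0$ in the adopted coordinates to reduce $F\partial_j\phi+\partial_j F=0$ to $\partial_j\phi=0$, and then read off the dichotomy. Your additional cross-check via item~\textbf{(c)} and the explicit remarks on $\delta_i=\partial_i$ are sound but not present in the paper's (much terser) argument.
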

\begin{proof}
By Proposition \ref{theorem anisotropic conf flat}, the Finsler metric $\overline{F}$ is locally Minkowskian surface if and only if $$F \partial_{j}\phi+\partial_{j}F=0.$$ Since  $F$ is  a locally Minkowskian metric ($\partial_{j}F=0$), then $F$ is anisotropic conformally flat if and only if the anisotropic conformal factor is either function of $y$ only or  homothetic.
\end{proof}

\begin{proposition}\label{prop proj. equ. Minko}
Let $F$ be an anisotropic conformally flat conic pseudo-Finsler metric. If $F^2\phi_{,1}+y^k\,\partial_{k}F=0$, then F is either projectively flat or $\phi$ is an isotropic function on some coordinate system of $TM$. 
\end{proposition}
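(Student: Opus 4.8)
The plan is to collapse the hypothesis into a single algebraic relation among $P$, $Q$ and $\phi_{;2}$ by feeding it into the characterization of anisotropic conformal flatness already proved in Proposition \ref{theorem anisotropic conf flat}. Since $F$ is anisotropically conformally flat, part \textbf{(c)} of that proposition gives $P=-\tfrac{1}{2}y^r\partial_r F$, i.e.\ $2P=-y^r\partial_r F$. Hence the assumption $F^2\phi_{,1}+y^k\partial_k F=0$ can be rewritten as $F^2\phi_{,1}=-y^k\partial_k F=2P$. Substituting this into the identity \eqref{relation between P and Q}, namely $2\varepsilon\phi_{;2}Q+2P=F^2\phi_{,1}$, the $2P$ terms cancel and one is left with $2\varepsilon\phi_{;2}Q=0$, hence $\phi_{;2}\,Q=0$. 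Thus the entire problem reduces to the dichotomy $Q=0$ or $\phi_{;2}=0$.

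Next I would interpret each alternative geometrically. Contracting the spray formula \eqref{geodesic of 2 dim} with $m_i$ and using $\ell^i m_i=0$ and $m^i m_i=\varepsilon$ together with the expression for $Q$ in Proposition \ref{theorem anisotropic conf flat}\,\textbf{(c)} yields $G^i m_i=-\varepsilon Q$. Therefore the case $Q=0$ is precisely $G^i m_i=0$, which is the projective-flatness condition $G^r m_r=0$ recalled earlier in the paper. In the remaining case $\phi_{;2}=0$, I would combine this with $\phi_{;1}=y^i\dot{\partial}_i\phi=0$ (automatic since $\phi$ is $h(0)$) in the decomposition $F\dot{\partial}_i\phi=\phi_{;1}\ell_i+\phi_{;2}m_i$; this forces $\dot{\partial}_i\phi=0$, so $\phi$ depends on position only, i.e.\ it is isotropic. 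Assembling the two cases gives exactly the claimed alternative: $F$ is projectively flat or $\phi$ is isotropic.

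The computational content is light, and I expect no genuine obstacle; the only point demanding care is the translation step, where the purely algebraic conditions $Q=0$ and $\phi_{;2}=0$ must be correctly matched to the geometric statements \emph{projectively flat} and \emph{$\phi$ isotropic}. The contraction of \eqref{geodesic of 2 dim} with $m_i$ and the vanishing $\phi_{;1}=0$ coming from homogeneity are what make these identifications rigorous, so the proof ultimately rests on the Berwald-frame identities rather than on any delicate estimate.
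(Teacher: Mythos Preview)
Your argument is correct. Both your proof and the paper's reduce to the single relation $\phi_{;2}\cdot(G^k m_k)=0$, but you reach it by a different route: you use part \textbf{(c)} of Proposition~\ref{theorem anisotropic conf flat} to read off $2P=-y^r\partial_r F$, feed the hypothesis into the identity \eqref{relation between P and Q} to obtain $\phi_{;2}Q=0$, and then translate $Q=0$ back to $G^i m_i=0$ via \eqref{geodesic of 2 dim}. The paper instead starts from part \textbf{(a)}, contracts $F\partial_j\phi+\partial_jF=0$ with $\ell^k$, and uses the auxiliary identities \eqref{eq} to obtain $F^2\phi_{,1}+2\phi_{;2}G^k m_k+2G^k\ell_k=0$ directly; the hypothesis (rewritten through \eqref{eq} as $F^2\phi_{,1}+2G^k\ell_k=0$) then cancels two terms, leaving $\phi_{;2}G^k m_k=0$. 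Your route is a little more economical since it leverages the $P,Q$ formalism and the ready-made identity \eqref{relation between P and Q}, avoiding the need to unpack the relations in \eqref{eq}; the paper's route, on the other hand, never passes through $P$ and $Q$ at all and stays with the raw spray data.
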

\begin{proof}
Let $F$ be  anisotropic conformally flat. By Proposition \ref{theorem anisotropic conf flat},  we have $\ell^k(F \partial_{k}\phi+\partial_{k}F)=0.$ 
From \eqref{eq}, we get 
\begin{equation}\label{necessary for conformally flat}
F^2\phi_{,1}+2 \phi_{;2}G^k\;m_k+2\,G^k\,\ell_{k}=0. 
\end{equation}
\par Assume that  $F^2\phi_{,1}+y^k\,\partial_{k}F=0,$ which is equivalent to $ F^2\phi_{,1}+2G^k\ell_{k}=0,$
by \eqref{eq}. Substituting into \eqref{necessary for conformally flat}, we obtain $\phi_{;2}\,G^k \,m_k=0. $      
Therefore,  $F$ is either projectively flat ($G^k \,m_k=0$) or $\phi$ is an isotropic function ($\phi_{;2}=0$).  
\end{proof}
In view proposition \ref{prop proj. equ. Minko}, if F is anisotropically conformally flat, then $\overline{F}$ is projectively flat in some coordinate system of $TM$, namely, the adopted coordinate system. Morover, the condition $F^2\phi_{,1}+2 G^k\phi_{;2}\;m_k+2\,G^k\,\ell_{k}=0$ is necessary but not sufficient for a conic pseudo-Finsler F to be anisotropically conformally flat.
\subsection{Douglas surfaces}
Since, the Douglas tensor is an invariant under projective change, we end our results by investigating it under the anisotropic conformal change \eqref{the anisotropic conformal transformation}.
\begin{definition}
A pseudo-Finsler metric is of Douglas type, if its Douglas tensor vanishes identically or equivalently \emph{\cite{Bacs-Matsumoto 2}} if the functions $D^{ij}:=G^iy^j-G^jy^i$ are homogeneous polynomials of degree three in $(y^i).$  
\end{definition}
By using \eqref{transform of ell^i} and \eqref{transform of coefficient spray}, we obtain
\begin{align}\label{transform of D^ij}
F\overline{D}^{ij}=FD^{ij}+Q(m^i\ell^j-m^j\ell^i).
\end{align}  
 It is clear from \eqref{transform of D^ij}, the property of $F$ being a Douglas metric is preserved under the anisotropic conformal change \eqref{the anisotropic conformal transformation}, if and only if $Q=0$ (or equivalently $\phi_{;2}\phi_{,1}+\phi_{,1;2}-2\phi_{,2}=0$). Consequently, the anisotropic conformal transformation, with $\phi_{;2}\phi_{,1}+\phi_{,1;2}-2\phi_{,2}=0$, of a Riemann (or Berwald) metric is of Douglas type.
 
 Assume that $F$ is a projectively flat metric and $Q=0$, then $\overline{F}$ is projectively flat \cite{first paper}. Consequently, $\overline{F}$ is of Douglas type when  $F$ is projectively flat if and only if $3R_{,2}-R_{;2,1}=0$ and $F$ is a Douglas metric (i.e., $6\mathcal{I}_{,1}+\varepsilon\mathcal{I}_{2;2}+2\mathcal{I}\mathcal{I}_{2}=0$) \cite{Bacs-Matsumoto}. Hence we have the following: 
\begin{proposition}
Let \eqref{the anisotropic conformal transformation} be the anisotropic conformal transformation. Then $\overline{F}$ is a Douglas metric if one of the following holds
\begin{description}
\item[(a)] $F$ is Riemannian (or Berwald) metric and $Q=0.$
    \item[(b)] $F$ is projectively flat metric and $Q=0.$
\end{description}
\end{proposition}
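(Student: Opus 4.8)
The plan is to reduce everything to the single transformation identity \eqref{transform of D^ij}, namely $F\overline{D}^{ij}=FD^{ij}+Q(m^i\ell^j-m^j\ell^i)$, which has already been established. The decisive observation is that both hypotheses impose $Q=0$, so that \eqref{transform of D^ij} collapses to $\overline{D}^{ij}=D^{ij}$; thus the quantities $D^{ij}=G^iy^j-G^jy^i$ are literally unchanged by the anisotropic conformal change. Since $\overline{F}$ is of Douglas type precisely when the $\overline{D}^{ij}$ are homogeneous polynomials of degree three in $y$, it suffices to verify that $F$ itself is a Douglas metric in each of the two listed cases. This turns the problem into a statement about the original metric only, and the conformal step contributes nothing beyond the vanishing of $Q$.

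For part \textbf{(a)}, I would read the Douglas property directly off the surface Douglas curvature \eqref{douglas curvature}, whose bracket is $6\mathcal{I}_{,1}+\varepsilon\mathcal{I}_{2;2}+2\mathcal{I}\mathcal{I}_2$ with $\mathcal{I}_2=\mathcal{I}_{,1;2}+\mathcal{I}_{,2}$. If $F$ is Riemannian then $\mathcal{I}=0$ identically, so every summand vanishes and $D^i_{jkr}=0$. If $F$ is Berwald then $\mathcal{I}_{,1}=\mathcal{I}_{,2}=0$; applying the v-derivative to the identically vanishing $\mathcal{I}_{,1}$ yields $\mathcal{I}_{,1;2}=0$, hence $\mathcal{I}_2=0$, and again the bracket in \eqref{douglas curvature} is zero. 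In either case $F$ is Douglas, and the invariance $\overline{D}^{ij}=D^{ij}$ noted above immediately gives that $\overline{F}$ is Douglas.

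For part \textbf{(b)}, projective flatness of $F$ means the spray coefficients take the form $G^i=P\,y^i$ for a scalar projective factor $P$, whence $D^{ij}=G^iy^j-G^jy^i=P\,y^iy^j-P\,y^jy^i=0$, so $F$ is trivially Douglas; alternatively one may invoke that $Q=0$ keeps $\overline{F}$ projectively flat, as in \cite{first paper}, together with the fact that the Douglas tensor is a projective invariant. Either route yields $\overline{D}^{ij}=D^{ij}=0$, so $\overline{F}$ is Douglas. The only genuinely delicate point in the whole argument is the Berwald case, where one must justify that $\mathcal{I}_{,1}=0$ \emph{as a function} forces $\mathcal{I}_{,1;2}=0$ and hence $\mathcal{I}_2=0$, rather than merely killing the first term of the bracket; once this is in hand, both parts follow at once from $Q=0$ in \eqref{transform of D^ij}.
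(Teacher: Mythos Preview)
Your proof is correct and follows essentially the same route as the paper: both arguments use \eqref{transform of D^ij} with $Q=0$ to conclude $\overline{D}^{ij}=D^{ij}$, reducing the question to whether $F$ itself is Douglas, and then dispatch case~\textbf{(a)} via the vanishing of the bracket in \eqref{douglas curvature}. For case~\textbf{(b)} your primary route (computing $D^{ij}=0$ directly from $G^i=Py^i$) is slightly more self-contained than the paper's, which instead cites \cite{first paper} to infer that $\overline{F}$ is projectively flat and hence Douglas; you also mention this alternative, so the two treatments coincide.
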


\begin{theorem}\label{theorem necc and suff for F bar Douglas }
Assuming that $(M,F)$ is a conic pseudo-Berwald Finsler surface, \eqref{the anisotropic conformal transformation} is the anisotropic conformal transformation. The Finsler metric $\overline{F}$ is of Douglas type if and only if $ 3\psi=\varepsilon\chi_{;2}+\mathcal{I}\chi$, where $\psi$ and  $\chi$ are given by
{\small{\begin{align} \label{psi fu}
F^{2}\psi &=[ (-3+\mathcal{I}_{;2})\varepsilon\,Q+(1+\mathcal{I}_{;2}+2\varepsilon\mathcal{I}^{2})\varepsilon\,P_{;2}+P_{;2;2;2}-3Q_{;2;2} +3\varepsilon\mathcal{I}P_{;2;2}-3\varepsilon\mathcal{I}Q_{;2}+2\mathcal{I}P],\end{align}}}
\begin{align}
F^{2}\chi &=[3P-(\mathcal{I}+\varepsilon\mathcal{I}_{;2;2}+\mathcal{I}\mathcal{I}_{;2})Q-(2\varepsilon\mathcal{I}_{;2}+\mathcal{I}^{2}-\varepsilon) Q_{;2}+3\varepsilon P_{;2;2}+Q_{;2;2;2}+3\mathcal{I}P_{;2}].\label{chi fun}
\end{align}
\end{theorem}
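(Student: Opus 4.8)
The plan is to push everything through the single transformation formula \eqref{transform of berwald curvature of spray derivative} for the Berwald curvature and then convert that Berwald curvature into the Douglas tensor of $\overline{F}$ by means of the surface formula \eqref{douglas curvature}. First I would activate the Berwald hypothesis on $F$: since $(M,F)$ is conic pseudo-Berwald we have $\mathcal{I}_{,1}=\mathcal{I}_{,2}=0$, hence $\mathcal{I}_2=\mathcal{I}_{,1;2}+\mathcal{I}_{,2}=0$ and, by \eqref{berwald curvature 2}, $B^{i}_{jkr}=0$. Substituting $B^{i}_{jkr}=0$ into \eqref{transform of berwald curvature of spray derivative} and dividing by $F^{2}$, the right-hand side is exactly the pair of brackets \eqref{psi fu} and \eqref{chi fun}, so
\begin{equation*}
F\,\overline{B}^{\,i}_{jkr}=\big(\psi\,\ell^{i}+\chi\,m^{i}\big)m_{j}m_{k}m_{r}.
\end{equation*}
This is the only step where the Berwald assumption is used, and it is what manufactures $\psi$ and $\chi$.

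Next I would write the Douglasian condition intrinsically for $\overline{F}$. Formula \eqref{douglas curvature} holds on every conic pseudo-Finsler surface, and the signature is preserved ($\overline{m}^{\,i}\overline{m}_{i}=\varepsilon$ follows directly from \eqref{transform of ell^i}--\eqref{transform of m^i}), so applying \eqref{douglas curvature} to $\overline{F}$ shows that $\overline{F}$ is Douglasian if and only if $6\,\overline{\mathcal{I}}_{,a}+\varepsilon\,\overline{\mathcal{I}}_{2;\,b}+2\,\overline{\mathcal{I}}\,\overline{\mathcal{I}}_2=0$, all derivatives now taken with respect to $\overline{F}$. Writing the $\overline{F}$-Berwald curvature in its own frame as $\overline{F}\,\overline{B}^{\,i}_{jkr}=(\overline{A}\,\overline{\ell}^{\,i}+\overline{C}\,\overline{m}^{\,i})\overline{m}_{j}\overline{m}_{k}\overline{m}_{r}$ with $\overline{A}=-2\overline{\mathcal{I}}_{,a}$ and $\overline{C}=\overline{\mathcal{I}}_2$ (again by \eqref{berwald curvature 2}, read for $\overline{F}$), this criterion becomes the scalar equation $3\overline{A}=\varepsilon\,\overline{C}_{;\,b}+2\,\overline{\mathcal{I}}\,\overline{C}$.

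The core of the proof is then to translate $\overline{A},\overline{C}$ back into $F$-data. Using \eqref{transform of ell^i} and \eqref{transform of m^i} to express $\overline{\ell}^{\,i},\overline{m}^{\,i},\overline{m}_{i}$ in the frame $(\ell,m)$ and matching the coefficients of $\ell^{i}$ and $m^{i}$ against $F\,\overline{B}^{\,i}_{jkr}=(\psi\ell^{i}+\chi m^{i})m_{j}m_{k}m_{r}$, one finds
\begin{equation*}
\overline{C}=e^{-\phi}\varepsilon\rho\,\chi,\qquad
\overline{A}=e^{-\phi}(\varepsilon\rho)^{3/2}\big(\psi+\varepsilon\phi_{;2}\,\chi\big).
\end{equation*}
Since $\overline{C}=\overline{\mathcal{I}}_2$ is $h(0)$, we have $\overline{C}_{;1}=0$, so \eqref{f;2 transform} reduces the mixed derivative to $\overline{C}_{;\,b}=\sqrt{\varepsilon\rho}\,(\overline{C})_{;2}$, which I would expand by the product rule using $(e^{-\phi})_{;2}=-e^{-\phi}\phi_{;2}$ and $(\ln\rho)_{;2}=\rho_{;2}/\rho$. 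Feeding these, the main scalar transformation \eqref{transform of main scalar 2} for $\overline{\mathcal{I}}$, and the two boxed identities into $3\overline{A}=\varepsilon\overline{C}_{;\,b}+2\overline{\mathcal{I}}\,\overline{C}$, every factor $e^{-\phi}$ and every power of $\sqrt{\varepsilon\rho}$ cancels; the $\rho_{;2}\chi$ term produced by $\overline{C}_{;\,b}$ is annihilated by the $-\tfrac{\varepsilon}{2}(\ln\rho)_{;2}$ piece hidden inside $\overline{\mathcal{I}}$, the $\phi_{;2}\chi$ contributions collapse, and after dividing by $\rho$ one is left with the stated identity $3\psi=\varepsilon\chi_{;2}+\mathcal{I}\chi$. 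Because this is a chain of equivalences, both directions of the ``if and only if'' are proved simultaneously.

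The main obstacle is precisely this last reduction: tracking the half-integer powers of $\varepsilon\rho$ and the signs of $\varepsilon$ through the frame change, and, above all, verifying that the $\rho$-derivative contributions generated by $\overline{C}_{;\,b}$ are exactly cancelled by those carried inside $\overline{\mathcal{I}}$ through \eqref{transform of main scalar 2}. It is this cancellation — together with the fact that $\rho$ and $(\ln\rho)_{;2}$ themselves depend on $\mathcal{I}$ via \eqref{formula of rho} — that turns an expression cluttered with $\rho_{;2},\rho_{;2;2}$ and $\phi_{;2}$ into the clean relation in the statement, and I would isolate $(\ln\rho)_{;2}=\rho_{;2}/\rho$ and the homogeneity fact $\overline{C}_{;1}=0$ as small preliminary observations to keep the coefficient bookkeeping transparent.
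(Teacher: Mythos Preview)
Your proposal is correct and follows essentially the same route as the paper: both arguments compare the $(\ell,m)$-frame expression $F\overline{B}^{\,i}_{jkr}=(\psi\ell^{i}+\chi m^{i})m_jm_km_r$ (obtained from \eqref{transform of berwald curvature of spray derivative} after setting $B^{i}_{jkr}=0$) with the $(\overline{\ell},\overline{m})$-frame expression coming from \eqref{berwald curvature 2} for $\overline{F}$, read off $\overline{\mathcal{I}}_{,a}$ and $\overline{\mathcal{I}}_2$ in terms of $\psi,\chi$, and then substitute into the Douglas criterion $6\overline{\mathcal{I}}_{,a}+\varepsilon\overline{\mathcal{I}}_{2;b}+2\overline{\mathcal{I}}\,\overline{\mathcal{I}}_2=0$, using \eqref{transform of main scalar 2} and \eqref{f;2 transform} to effect the cancellations you describe. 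The paper's write-up differs only cosmetically, first recording the general formulae with $\mathcal{I}_{,1},\mathcal{I}_2$ present and then specializing to the Berwald case, but the mechanism and the key cancellation of the $\phi_{;2}\chi$ and $\rho_{;2}\chi/\rho$ terms against the pieces of $\overline{\mathcal{I}}$ are identical to yours.
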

\begin{proof}
Let $(M,F)$ be a conic pseudo-Finsler surface. The Berwald curvature written as  
\begin{align}\label{berwald curvature for F}
FB^{i}_{jkr}=[-2\mathcal{I}_{,1}\ell^{i}+\mathcal{I}_{2}m^{i}]m_{j}m_{k}m_{r}.
\end{align}
Under the anisotropic conformal transformation \eqref{the anisotropic conformal transformation}, the Berwald curvature of $\overline{F}$ is given by
\begin{align*}
\overline{F}\;\overline{B}^{i}_{jkr}=[-2\overline{\mathcal{I}}_{,\,a}\overline{\ell}^{i}+\overline{\mathcal{I}}_{d}\overline{m}^i]\overline{m}_{j}\overline{m}_{k}\overline{m}_{r}.
\end{align*}
From \eqref{transform of ell^i} and \eqref{transform of m^i}, we get
\begin{align}\label{berwald curvature of F bar after tans berwald frame}
F\overline{B}^{i}_{jkr}&=e^{\phi}[-2\overline{\mathcal{I}}_{,\,a}\ell^{i}+\sqrt{\varepsilon\rho}\overline{\mathcal{I}}_{d}(m^{i}-\varepsilon\phi_{;2}\ell^{i})](\frac{\varepsilon}{\rho})^{\frac{3}{2}} m_{j}m_{k}m_{r}\nonumber\\
&=e^{\phi}(\frac{\varepsilon}{\rho})^{\frac{3}{2}}[\{-2\overline{\mathcal{I}}_{,\,a}-\varepsilon\sqrt{\varepsilon\rho}\phi_{;2}\overline{\mathcal{I}}_{d}\}\ell^{i}+\sqrt{\varepsilon\rho}\overline{\mathcal{I}}_{d}m^{i}]m_{j}m_{k}m_{r}.
\end{align}
From \eqref{transform of berwald curvature of spray derivative} and \eqref{berwald curvature for F} we get
\begin{equation}\label{berwald curvature of F bar with chi, psi}
F\overline{B}^{i}_{jkr}=[(-2\mathcal{I}_{,1}+\psi)\ell^{i}+(\mathcal{I}_{2}+\chi) m^{i}]m_{j}m_{k}m_{r}.
\end{equation}
Then, we have from \eqref{berwald curvature of F bar after tans berwald frame} and \eqref{berwald curvature of F bar with chi, psi} 
\begin{align}
\overline{\mathcal{I}}_{d}=&\varepsilon e^{-\phi}\rho (\mathcal{I}_{2}+\chi), \qquad
\overline{\mathcal{I}}_{,\,a}=\frac{-1}{2}e^{-\phi}(\varepsilon\rho)^{\frac{3}{2}}[-2\mathcal{I}_{,1}+\psi+\varepsilon\phi_{;2}(\mathcal{I}_{2}+\chi)], \label{I,a with chi and psi} 
\end{align}
where $\psi, \, \chi$ defined by \eqref{psi fu} and \eqref{chi fun}.
 Since $F$ is  Berwaldian $(\mathcal{I}_{,1}=\mathcal{I}_2=0)$,  \eqref{I,a with chi and psi}  has the form:
\begin{align}
\overline{\mathcal{I}}_{d}=&\varepsilon e^{-\phi}\rho \chi, \qquad \overline{\mathcal{I}}_{,\,a}=\frac{-1}{2}e^{-\phi}(\varepsilon\rho)^{\frac{3}{2}}[\psi+\varepsilon\phi_{;2}\chi].\label{second equ of theorem equ necess for Douglas}
\end{align}
From \eqref{I;b bar formula} and the first equation of \eqref{second equ of theorem equ necess for Douglas}, we get
\begin{align}\label{third equ of theorem equ necess for Douglas}
\overline{\mathcal{I}}_{d;b}=&\sqrt{\varepsilon\rho}\,\overline{\mathcal{I}}_{d;2}=\varepsilon e^{-\phi}\sqrt{\varepsilon\rho}[-\phi_{;2}\rho \chi+\rho_{;2} \chi+\rho \chi_{;2}]
\end{align}
From \eqref{transform of main scalar 2}, \eqref{second equ of theorem equ necess for Douglas} and \eqref{third equ of theorem equ necess for Douglas}, the Finsler metric $\overline{F}$ is a Douglas metric if and only if
\begin{align*}
0=&6\overline{\mathcal{I}}_{,\,a}+\varepsilon\overline{\mathcal{I}}_{d;b}+2\overline{\mathcal{I}}\;\overline{\mathcal{I}}_d\\
=& e^{-\phi}(\varepsilon\rho)^{\frac{3}{2}}[-3\psi-4\varepsilon\phi_{;2}\chi+\varepsilon\frac{\rho_{;2}}{\rho}\chi+\varepsilon\chi_{;2}]+2e^{-\phi}(\varepsilon\rho)^{\frac{3}{2}}[\mathcal{I}\chi+\varepsilon\phi_{;2}\chi-\frac{\varepsilon\rho_{;2}}{2\rho}\chi]\\
=& e^{-\phi}(\varepsilon\rho)^{\frac{3}{2}}[\mathcal{I}\chi-3\psi+\varepsilon\chi_{;2}].
\end{align*}
\vspace*{-1.3cm}\[\qedhere\]
\end{proof}

\begin{corollary}\label{proposition necc suff F bar Douglas}
If $(M,F)$ is a conic pseudo-Riemannian surface, then   $\overline{F}$ is of Douglas type if and only if $9\varepsilon Q+10Q_{;2;2}+\varepsilon Q_{;2;2;2;2}=0$.
\end{corollary}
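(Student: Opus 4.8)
The plan is to specialize Theorem~\ref{theorem necc and suff for F bar Douglas } to the Riemannian case. A conic pseudo-Riemannian surface is in particular a conic pseudo-Berwald surface, with vanishing main scalar $\mathcal{I}=0$ (hence $\mathcal{I}_{;2}=\mathcal{I}_{;2;2}=\cdots=0$), so the hypotheses of the theorem hold and its criterion $3\psi=\varepsilon\chi_{;2}+\mathcal{I}\chi$ collapses to $3\psi=\varepsilon\chi_{;2}$. First I would set $\mathcal{I}=0$ in the defining expressions \eqref{psi fu} and \eqref{chi fun}, which simplify to
\begin{align*}
F^{2}\psi &= -3\varepsilon Q-3Q_{;2;2}+\varepsilon P_{;2}+P_{;2;2;2},\\
F^{2}\chi &= 3P+\varepsilon Q_{;2}+3\varepsilon P_{;2;2}+Q_{;2;2;2}.
\end{align*}

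Next I would compute $\chi_{;2}$. The key observation is that $(\,\cdot\,)_{;2}$ is a derivation, since $\dot{\partial}_i$ is and $f_{;2}=\varepsilon F(\dot{\partial}_i f)m^i$, and moreover $(F^2)_{;2}=\varepsilon F(\dot{\partial}_iF^2)m^i=2\varepsilon F^2\ell_i m^i=0$ by the orthogonality $\ell_i m^i=0$ of the modified Berwald frame. Consequently division by $F^2$ commutes with $(;2)$, so $\chi_{;2}=\tfrac{1}{F^2}(F^2\chi)_{;2}$, that is,
\begin{equation*}
F^{2}\chi_{;2}=3P_{;2}+\varepsilon Q_{;2;2}+3\varepsilon P_{;2;2;2}+Q_{;2;2;2;2}.
\end{equation*}
(Here $P,Q$ are $h(2)$ by \eqref{formula of P only} and \eqref{formula of Q only}, hence $\psi,\chi$ are $h(0)$ and these scalar derivatives are well defined.)

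Finally I would form $3F^{2}\psi-\varepsilon F^{2}\chi_{;2}$, using $\varepsilon^{2}=1$. The decisive point is that every $P$-contribution cancels, namely $3\varepsilon P_{;2}-3\varepsilon P_{;2}=0$ and $3P_{;2;2;2}-3P_{;2;2;2}=0$, leaving exactly $-9\varepsilon Q-10Q_{;2;2}-\varepsilon Q_{;2;2;2;2}$. Setting this to zero and multiplying by $-1$ gives the asserted identity $9\varepsilon Q+10Q_{;2;2}+\varepsilon Q_{;2;2;2;2}=0$; since each reduction is an equivalence, the corollary holds in both directions. The only genuine obstacle is the careful bookkeeping in $\chi_{;2}$, but once $(F^2)_{;2}=0$ is recognized, the cancellation of the $P$-terms is automatic and yields a condition purely in $Q$.
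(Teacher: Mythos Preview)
Your proof is correct and follows exactly the same approach as the paper: specialize Theorem~\ref{theorem necc and suff for F bar Douglas } to $\mathcal{I}=0$, reducing the criterion to $3\psi=\varepsilon\chi_{;2}$, and then substitute the simplified forms of $\psi$ and $\chi$. The paper's proof merely states that the result follows after setting $\mathcal{I}=0$ in \eqref{psi fu} and \eqref{chi fun}, whereas you have spelled out the cancellation of the $P$-terms and justified $(F^{2})_{;2}=0$, which is the only nontrivial step.
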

\begin{proof} Since $I =0$, by assumption. 
From Theorem \ref{theorem necc and suff for F bar Douglas },  $\overline{F}$ is of Douglas type if and only if $ 3\psi=\varepsilon\chi_{;2}.$ Setting $I=0$ in \eqref{psi fu}, \eqref{chi fun}, the proof follows.
\end{proof}
\section{Conclusion}
We complete what we started in \cite{first paper} by developing the framework of anisotropic conformal transformations of Finsler surfaces, exploring the relationships between important Finslerian geometric objects such as Berwald, Landsberg, and Douglas tensors. We emphasize on how an anisotropic conformal transformation can convert a pseudo-Riemannian metric into non-Riemannian pseudo-Finsler metric which contrasts  isotropic conformal transformations.\\
The following points are to be highlighted:
\begin{itemize}
    \item The v-scalar derivatives $(f_{;\,a}, f_{;b})$ and h-scalar derivatives $(f_{,a}, f_{,b})$ has been defined in $(M,\overline{F})$ for a scalar field $f \in C^{\infty}(TM_0)$. Consequently, we find $\mathcal{\overline{I}}_{;b},\,\mathcal{\overline{I}}_{,a}$ and $\mathcal{\overline{I}}_{,b}$ which characterize all special conic pseudo Finsler surfaces associated with $\overline{F}$ to be either Berwaldian or Landsbergian or Douglasian. We have found out the anisotropic conformal transformation of Berwald curvature and Landsberg scalar.
   
    \item We have found two equivalent conditions for  a Riemannian metric to be anisotropically conformal transformed to a Berwald metric. The first is a relation between $P$, $Q$ and their derivatives. The second is an expression of the symmetric tensor $M^j_{ik}$ on the base manifold  satisfies $ F^2\delta_{i}(e^{2\phi})=y^kM^j_{ik}\dot{\partial}_{j}\overline{F}^2.$ 
    \item  The conic pseudo-Finsler metric $\overline{F}$ satisfies the $T$-condition if and only if its main scalar is isotropic with respect to $F$. Additionally, under the assumption that the determinant of the Finsler metric tensor is  invariant, we find an equivalent condition such that the $T$-condition is preserved under the anisotropic conformal transformation.
    \item Under the anisotropic conformal transformation, we have found necessary and sufficient  conditions for $\overline{F}$ to be Landsbergian. Moreover, we found the conditions for the Landsberg metric $\overline{F}$ to be Berwaldian.
    \item If the conformal factor depends on position alone, the anisotropic conformal transformation reduced to the well-known  isotropic conformal transformation. Consequently, the results obtained align precisely with those derived from the isotropic conformal transformation. For example, in this case,  we have:
\begin{align}\label{all variable when phi of x only}
\phi_{;2}=0, \quad  \sigma=0, \quad \rho=\varepsilon, \quad \overline{\mathcal{I}}=\mathcal{I}, \quad Q=-\frac{1}{2} F^2\phi_{,2},\quad P=\frac{1}{2}F^2\phi_{,1}.
\end{align}
-- The property of being Berwaldian is preserved under the anisotropic conformal transformation  if  either the conformal factor is homothetic or  $F$ satisfies the $T$-condition.
 \par-- From \eqref{all variable when phi of x only} in \eqref{transform of landsberg scalar 1}, we get 
 $ \overline{\mathcal{J}}=\mathcal{J}+2F\phi_{,2} \mathcal{I}_{;2}.$
    We have $\overline{\mathcal{J}}=\mathcal{J}$ if and only if 
   either $\phi_{,2}=0 $ or $\mathcal{I}_{;2}=0$.
   If $\phi_{,2}=0$, then by \eqref{third comutation} and $\phi_{;2}=0$,  we get  $\phi$ is a constant function, that is, $\phi$ is homothetic (trivial case). Consequently, $ \overline{\mathcal{J}}=\mathcal{J}$ if $\mathcal{I}_{;2}=0$ ($F$ satisfies $T$-condition) which is also mentioned in \cite[Theorem 3.9]{elgendi T}.
\par   
-- Now, plugging \eqref{all variable when phi of x only} into \eqref{I,a bar formula} and \eqref{I,b bar formula}, respectively, we obtain 
\begin{align}\label{1st prime berwald when phi of x only}\overline{\mathcal{I}}_{,\,a}&=e^{-\phi} [\mathcal{I}_{,1}+\varepsilon \phi_{,2}\mathcal{I}_{;2}], \qquad
\overline{\mathcal{I}}_{,\,b}=e^{-\phi} [\mathcal{I}_{,2}-( \phi_{,1}+\mathcal{I}\phi_{,2})\mathcal{I}_{;2}].
\end{align}
Applying \eqref{third comutation} for $f=\phi$, we get 
$0=-\varepsilon(\phi_{,1}+\mathcal{I}\phi_{,2}) $.
Thus, $\overline{\mathcal{I}}_{,\,b}=e^{-\phi} [\mathcal{I}_{,2}].$ Therefore, the property of Berwaldian is preserved if and only if either $F$ satisfies the $T$-condition or the conformal factor is homothetic which is also mentioned in \cite[Theorem 3.11]{elgendi T}.

\end{itemize}

\end{document}